\documentclass[final,reqno]{amsart}

\usepackage{fullpage}
\usepackage{mathtools,amssymb,amsthm}
\usepackage[foot]{amsaddr}
\usepackage{mathrsfs,euscript}
\usepackage{bm} 
\usepackage[only llbracket,rrbracket,longmapsfrom]{stmaryrd}
\usepackage{enumitem}
\usepackage{showkeys}
\usepackage[l2tabu,orthodox]{nag}
\usepackage[bookmarks=false,draft=false,breaklinks,colorlinks]{hyperref} 
\usepackage{cleveref}

\numberwithin{equation}{section}
\allowdisplaybreaks

\newenvironment{Ack}%
{\par \vspace{\baselineskip}%
 \noindent \textbf{Acknowledgements.}}%
{\par \vspace{\baselineskip}}

\setenumerate{label=(\arabic*),nosep}
\setitemize{nosep}
\newlist{clist}{enumerate}{1}
\setlist*[clist]{label=(\roman*), nosep}

\theoremstyle{definition}
\newtheorem{thm}{Theorem}[section]
\newtheorem{dfn}[thm]{Definition}
\newtheorem{prp}[thm]{Proposition}
\newtheorem{lem}[thm]{Lemma}
\newtheorem{cor}[thm]{Corollary}

\newtheorem{rmk}[thm]{Remark}

\newtheorem*{rmk*}{Remark}

\crefname{thm}{Theorem}{Theorems}
\crefname{dfn}{Definition}{Definitions}
\crefname{prp}{Proposition}{Propositions}
\crefname{lem}{Lemma}{Lemmas}
\crefname{cor}{Corollary}{Corollaries}
\crefname{rmk}{Remark}{Remarks}
\crefname{eg}{Example}{Examples}
\crefname{figure}{Figure}{Figures}
\crefname{table}{Table}{Tables}
\crefname{section}{\S\!}{\S\S\!}
\crefname{subsection}{\S\!}{\S\S\!}
\crefname{appendix}{Appendix}{Appendices}
\crefname{equation}{}{}

\newcommand{\bl}{\bullet}
\newcommand{\ep}{\epsilon}
\newcommand{\ve}{\varepsilon}
\newcommand{\ol}{\overline}
\newcommand{\ul}{\underline}
\newcommand{\wh}{\widehat}
\newcommand{\wt}{\widetilde}
\newcommand{\ceq}{\coloneqq} 

\newcommand{\xr}{\xrightarrow}
\newcommand{\xrr}[1]{\xrightarrow{\ #1 \ }{}}
\newcommand{\inj}{\hookrightarrow}

\newcommand{\lto}{\longrightarrow}
\newcommand{\sto}{\xr{\sim}}
\newcommand{\lsto}{\xrr{\sim}}
\newcommand{\mto}{\mapsto}
\newcommand{\lmto}{\longmapsto}

\newcommand{\q}{\textup{q}}
\newcommand{\an}{\textup{an}}
\newcommand{\GF}{\textup{GF}}
\newcommand{\hol}{\textup{hol}}
\newcommand{\std}{\textup{std}}
\newcommand{\tcl}{\textup{cl}}
\newcommand{\BHS}{\textup{BHS}}
\newcommand{\MSV}{\textup{MSV}}
\newcommand{\sing}{\textup{sing}}

\newcommand{\tred}{\textup{red}}
\newcommand{\SUSY}{\textup{SUSY}}
\newcommand{\Weiss}{\textup{Weiss}}

\newcommand{\bbC}{\mathbb{C}}
\newcommand{\bbR}{\mathbb{R}}
\newcommand{\bbZ}{\mathbb{Z}}
\newcommand{\sC}{\mathbb{C}^{1|1}}
\newcommand{\sD}{\mathbb{D}^{1|1}}
\newcommand{\clA}{\mathcal{A}}
\newcommand{\clD}{\mathcal{D}}
\newcommand{\clE}{\mathcal{E}}
\newcommand{\clF}{\mathcal{F}}
\newcommand{\clH}{\mathcal{H}}
\newcommand{\clJ}{\mathcal{J}}
\newcommand{\clM}{\mathcal{M}}
\newcommand{\clV}{\mathcal{V}}
\newcommand{\sfC}{\mathsf{C}}
\newcommand{\Ch}{\mathsf{Ch}}
\newcommand{\DVS}{\mathsf{DVS}}
\newcommand{\shO}{\EuScript{O}}
\newcommand{\frg}{\mathfrak{g}}
\newcommand{\frt}{\mathfrak{t}}

\newcommand{\ev}{\ol{0}}
\newcommand{\od}{\ol{1}}
\newcommand{\Zt}{\bbZ/2\bbZ}
\newcommand{\pd}{\partial}
\newcommand{\bpd}{\overline{\partial}}
\newcommand{\sSig}{\Sigma^{1|1}}
\newcommand{\hf}{\frac{1}{2}}
\newcommand{\thf}{\tfrac{1}{2}}
\newcommand{\vac}{\bm{1}}

\newcommand{\abs}[1]{\left| #1 \right|} 
\newcommand{\dbr}[1]{\llbracket #1 \rrbracket} 
\newcommand{\pair}[1]{\langle #1 \rangle}
\newcommand{\cObs}[1]{\mathop{\Obs^{\textup{cl}}_{#1}}}
\newcommand{\qObs}[1]{\mathop{\Obs^{\textup{q}}_{#1}}}
\newcommand{\scObs}[1]{\mathop{\EuScript{O}bs^{\tcl}_{#1}}}
\newcommand{\sqObs}[1]{\mathop{\EuScript{O}bs^{\q}_{#1}}}
\newcommand{\rst}[2]{\left. #1 \right|_{#2}}

\newcommand{\dbc}{%
 \mathrel{\vcenter{\hbox{\ooalign{%
  \raisebox{0.5ex}{$\scriptstyle \circ$}\cr%
  \raisebox{-0.5ex}{$\scriptstyle \circ$}%
 }}}}%
}
\newcommand{\nod}[1]{\mathopen{\dbc} \mathrel{#1} \mathclose{\dbc}} 

\DeclareMathOperator{\id}{id}

\DeclareMathOperator{\evl}{ev}
\DeclareMathOperator{\CdR}{CdR}

\DeclareMathOperator{\bbD}{\mathbb{D}}
\DeclareMathOperator{\bbV}{\mathbb{V}}
\DeclareMathOperator{\Ber}{Ber}
\DeclareMathOperator{\Der}{Der}
\DeclareMathOperator{\Hom}{Hom}
\DeclareMathOperator{\End}{End}

\DeclareMathOperator{\Map}{Map}
\DeclareMathOperator{\Ric}{Ric}
\DeclareMathOperator{\Sym}{Sym}
\DeclareMathOperator{\Obs}{Obs}
\DeclareMathOperator{\CAlg}{CAlg}

\DeclareMathOperator{\Disk}{\mathsf{Disks}}
\DeclareMathOperator*{\sres}{sres}
\DeclareMathOperator{\Wedge}{{\textstyle\bigwedge}}
\DeclareMathOperator{\hocolim}{hocolim}

\begin{document}

\title{BV construction of SUSY vertex algebras from SUSY factorization algebras}
\author{Shintarou Yanagida}
\address{Graduate School of Mathematics, Nagoya University.
 Furocho, Chikusaku, Nagoya, Japan, 464-8602.}
\email{yanagida@math.nagoya-u.ac.jp}
\date{June 4, 2026}
\keywords{Costello--Gwilliam factorization algebras, supersymmetric vertex algebras}

\begin{abstract}
We construct $N=1$ supersymmetric (SUSY) vertex algebras from 
supersymmetric enhancements of Costello--Gwilliam factorization algebras 
on super Riemann surfaces.
Introducing SUSY factorization algebras defined on embedded SUSY disks 
together with natural symmetry conditions,
we prove a SUSY analogue of the Costello--Gwilliam extraction theorem.
As an application, we study the holomorphic sigma model in the BV formalism.
For a linear target, we obtain the free $bc$-$\beta\gamma$ system 
and recover its structure as a SUSY vertex algebra. 
For general complex targets, we describe the descent of the theory 
under coordinate changes and identify the resulting SUSY vertex algebra 
with the chiral de Rham complex. 
We further show that Ricci-flat K\"ahler and hyperk\"ahler targets 
give rise to $N=2$ and $N=4$ supersymmetric enhancements 
introduced by Ben-Zvi--Heluani--Szczesny.
\end{abstract}

\maketitle
{\small \tableofcontents}

\setcounter{section}{-1}
\section{Introduction}

We aim to formulate \emph{holomorphic factorization algebras} 
on a \emph{SUSY curve} (a superconformal curve, or a super Riemann surface)
in the Costello--Gwilliam sense \cite{CG1,CG2}, 
and to use them to construct \emph{SUSY vertex algebras} \cite{HK}
via local-to-global methods. 
The motivating example is the $bc$-$\beta\gamma$ system 
whose local operator algebra matches the free-field description 
underlying the chiral de Rham complex (CdR) 
introduced by Malikov--Schechtman--Vaintrob \cite{MSV},
together with the geometric $N{=}1,2,4$ SUSY structures constructed by
Ben-Zvi--Heluani--Szczesny \cite{BHS}.

The purpose of this paper is to establish a systematic link between 
\emph{SUSY factorization algebras} and \emph{SUSY vertex algebras}, 
extending the framework of Costello--Gwilliam \cite{CG1,CG2} 
to the setting of super-geometry.

Factorization algebras provide a powerful local-to-global formalism
for quantum field theory, 
encoding observables as a cosheaf-like structure on space-time. 
In this paper, we focus on two-dimensional chiral conformal field theories 
and their supersymmetric extensions. 
The corresponding theories of factorization algebras 
have been developed by several authors, including the works by 
Beilinson--Drinfeld \cite{BD},
Kapranov--Vasserot \cite{KV1,KV2,KV3,KV4,KV5}, 
Francis--Gaitsgory \cite{FG}, 
and Costello--Gwilliam \cite{CG1,CG2}.
The present work follows the Costello--Gwilliam approach.
We refer the reader to \cite{CG,GW} for recent developments in this direction.

In the holomorphic setting, 
Costello--Gwilliam \cite[Chapter 5]{CG1} showed that suitable
factorization algebras on the complex plane give rise to vertex algebras. 
Since then, there have been numerous studies exploring the relationship 
between Costello--Gwilliam factorization algebras and vertex algebras, 
including works by Gwilliam \cite{G}, Williams \cite{Wil}, 
Bruegmann \cite{Br1,Br2}, Vicedo \cite{V}, and Nishinaka \cite{N1,N2}.

\medskip 

The goal of this paper is to extend this correspondence to supersymmetric 
theories by replacing the complex plane with an $N=1$ supercurve 
and vertex algebras with SUSY vertex algebras 
in the sense of Heluani--Kac \cite{HK}. 
The latter encode the algebraic structure of observables in two-dimensional
conformal field theories enhanced by supersymmetry.

There have been numerous studies on SUSY vertex algebras, 
including works by Heluani \cite{H1,H2}, Ben-Zvi--Heluani--Szczesny \cite{BHS},
S.Y.\ \cite{Y}, and Nishinaka--S.Y.\ \cite{NY}. 
The present work is a continuation of the latter two papers, and 
aims to clarify their relationship with Costello--Gwilliam factorization algebras.

\medskip 

To achieve this goal, in \cref{s:1} we introduce the notion of an $N=1$ 
SUSY prefactorization algebra defined on a category of embedded SUSY disks. 
This replaces the usual category of disks in a Riemann surface 
and allows one to incorporate odd directions. 
Imposing Weiss descent yields the notion of a SUSY factorization algebra.
We then formulate natural conditions reflecting holomorphicity, 
super-translation invariance, and $S^1$-equivariance, 
and prove a SUSY analogue (\cref{thm:1:533}) 
of the Costello--Gwilliam construction \cite[\S5.3]{CG1}: 
under these assumptions, the cohomology of observables 
on a small SUSY disk carries a canonical structure of 
an $N_K=1$ SUSY vertex algebra in the sense of \cite[\S4]{HK}. 
This can be viewed as an ``extraction functor'' from 
SUSY factorization algebras to SUSY vertex algebras,
a SUSY extension of the construction in \cite[Chapter 5]{CG1}.
We also introduce the notion of SUSY Poisson factorization algebras,
which will be used in the next \cref{s:2}.

The main source of examples comes from the holomorphic sigma model
with $N=1$ supersymmetry, as we will show in \cref{s:2}. 
We formulate this theory in the Batalin--Vilkovisky (BV) 
formalism using superfields, 
a SUSY extension of the construction in \cite{CG2}. 
For a linear target, the model reduces to the free $bc$-$\beta\gamma$ system.
We show in \cref{thm:2:V(cObs)} that the classical observables 
form a SUSY Poisson factorization algebra,
and in \cref{thm:2:qObsVA} that its quantization produces 
a SUSY vertex algebra whose operator product expansion (OPE)
coincides with the standard free-field realization 
underlying the chiral de Rham complex \cite{MSV}.

In \cref{s:3}, for a general complex manifold $X$, 
we construct local factorization algebras modeled on 
the free theory in coordinate charts 
and study their descent under holomorphic coordinate changes. 
We show in \cref{thm:3:obs} that the resulting sheaf of SUSY vertex algebras
coincides with the chiral de Rham complex of $X$. 
In this way, we reinterpret the construction of 
Malikov--Schechtman--Vaintrob \cite{MSV} and 
its supersymmetric refinement by Ben-Zvi--Heluani--Szczesny \cite{BHS}
from the viewpoint of factorization algebras.

Finally, in \cref{s:4}, we incorporate geometric structures on the target.
When $X$ is Ricci-flat K\"ahler or hyperk\"ahler, 
the BV theory admits additional local currents whose extracted fields
generate $N=2$ and $N=4$ SUSY vertex algebras, respectively
(\cref{thm:4:N=2,thm:4:N=4}). 
This lifts the structures on the chiral de Rham complex in \cite{BHS} 
to the level of SUSY factorization algebras.

\subsection*{Organization}

The paper is organized as follows. 
In \cref{s:1}, we develop the general theory of 
SUSY factorization algebras and prove the extraction theorem. 
In \cref{s:2}, we analyze the linear sigma model 
and identify the resulting SUSY vertex algebra. 
In \cref{s:3}, we globalize the construction to arbitrary complex manifolds.
In \cref{s:4}, we study enhanced supersymmetry 
in the presence of special geometric structures.

\subsection*{Global notation and terminology}

We summarize here the notation and terminology used throughout the main text.
\begin{itemize}
\item
The symbols $\bbZ$, $\bbR$, and $\bbC$ denote 
the set of all integers, real numbers, and complex numbers, respectively.

\item
Unless otherwise specified, 
linear spaces are defined over the field $\bbC$, 
and $\Hom$, $\End$, and $\otimes$ are understood in this category

\item
For a linear space $V$, we denote the linear dual $\Hom(V,\bbC)$ by $V^\vee$.

\item 
In \cref{s:2}--\cref{s:4}, we work in the dg super setting.
We denote the cohomological degree by $\abs{\cdot} \in \bbZ$,
and the parity (the super degree) by $p(\cdot)\in\Zt=\{\ev,\od\}$.

\item
Einstein's summation convention is used throughout. 
For example, $x^ip_i \ceq \sum_i x^i p_i$.

\item
We follow \cite{FB,K} for the terminology of vertex algebras.
\end{itemize}

\section{Constructing SUSY vertex algebras from SUSY factorization algebras}
\label{s:1}

In this section we establish a supersymmetric (SUSY) enhancement of 
the construction \cite[Chap.\ 5]{CG1}
of vertex algebras from Costello--Gwilliam (CG) factorization algebras
on $\bbC$ satisfying several conditions. 
To achieve this, we start with introducing a correct SUSY counterpart 
of CG factorization algebras.
To motivate it, recall that the non-super argument \cite[\S3.1, \S5.1]{CG1} 
uses the (multi)category $\Disk(\bbC)$ 
of open disks in $\bbC$ and their embeddings, 
but the resulting CG factorization algebra does not see the supergeometry.
In order to take ``odd directions'' into consideration, we work on 
a basis of \emph{embedded SUSY disks} in an \emph{$N=1$ SUSY curve}.

\subsection{SUSY curves and SUSY prefactorization algebras}
\label{ss:1.1}

Let $X$ be an $N=1$ SUSY curve in the sense of \cite[Chap.\ 2]{M}, 
i.e., a complex supermanifold of dimension $1|1$ 
equipped with a rank $0|1$ distribution $\clD_X \subset T_X$ 
that is maximally non-integrable. 
It is equivalent to the notion of an $N=1$ superconformal curve 
in the sense of \cite{BR}.
Locally there exist super-coordinates\footnote{The standard notation is
 $(z,\theta)$, but we use $\zeta$ for the odd coordinate 
 and reserve the symbol $\theta$.} 
$(z,\zeta)$ for which $\clD_X$ is generated by the odd vector field
\begin{align}\label{eq:1:D}
 D_Z \ceq \pd_{\zeta} + \zeta\,\pd_z.
\end{align}
The standard local model is the $N=1$ SUSY disk $\sD$,
which is the complex superdisk of dimension $1|1$ 
with the distribution $\clD_{\std}$ generated by this $D_Z$.

As a SUSY analogue of the aforementioned category $\Disk(\Sigma)$ 
for a Riemann surface $\Sigma$, we consider:

\begin{dfn}[{The category of SUSY disks}]
Define a symmetric monoidal category $\Disk^{\SUSY}(X)$ by: 
\begin{itemize}
\item 
Objects are pairs $(U,\varphi)$ where $U\subset X$ is an open sub-supermanifold
admitting a finite decomposition $U=\bigsqcup_{i=1}^n U_i$ and
$\varphi = (\varphi_i\colon \sD \sto U_i)_{i=1}^n$ 
are superconformal isomorphisms 
(i.e.,\ $(\varphi_i)_*(\clD_{\std})=\rst{\clD_X}{U_i}$).

\item 
Morphisms $(U,\varphi) \to (V,\psi)$ are 
inclusions of opens $U \inj V$ viewed over $X$.
Compositions of morphisms and identities are naturally defined.

\item
The symmetric monoidal structure is given by disjoint union, 
$(U,\varphi) \otimes (V,\psi) \ceq (U \sqcup V,\varphi\sqcup\psi)$ 
when $U\cap V=\emptyset$, with unit $\emptyset$.
\end{itemize}
In the following, we denote an object $(U,\varphi)$ by $U$ for simplicity.
\end{dfn}

%
%
%

Then, the following is a natural SUSY analogue of 
a CG prefactorization algebra on a Riemann surface \cite[\S3.1, \S5.1]{CG1}.


\begin{dfn}
Let $(\sfC,\otimes,\bm{1})$ a symmetric monoidal ($\infty$-)category 
admitting (homotopy) colimits. 
An \emph{$N=1$ SUSY prefactorization algebra} on an $N=1$ SUSY curve $X$ 
with values in $\sfC$ is a symmetric lax monoidal functor
\[
 \clF\colon \Disk^{\SUSY}(X) \lto \sfC.
\]
In other words, it is a functor together with a morphism 
$\eta\colon \bm{1} \to \clF(\emptyset)$ and morphisms 
$\mu_{U \sqcup V}\colon \clF(U) \otimes \clF(V) \to \clF(U \sqcup V)$
for all $U,V \in \Disk^{\SUSY}(X)$,
satisfying the standard associativity, naturality and unitality conditions.
\end{dfn}

In particular, for a disjoint union 
$U=\bigsqcup_{i=1}^n U_i \in \Disk^{\SUSY}(X)$, we have a morphism
\begin{align}\label{eq:1:str}
 \mu_U\colon \clF(U_1) \otimes \dotsb \otimes \clF(U_n) \lto 
 \clF\Bigl(\bigsqcup_{i=1}^n U_i\Bigr), 
\end{align}
which we call the \emph{factorization product} for $U$.
In addition, for an inclusion $j\colon U \inj V$, we have a morphism
\[
 \clF(j)\colon \clF(U) \lto \clF(V),
\]
which we call the \emph{extension map} for $j$.

For later use, let us introduce 
a straightforward analogue of \cite[Definition 3.1.3]{CG1}: 

\begin{dfn}\label[dfn]{dfn:1:unital}
An $N=1$ SUSY prefactorization algebra $\clF$ valued in $(\sfC,\otimes,\bm{1})$
is called \emph{unital} 
if it has a morphism $\bm{1} \to \clF(\emptyset)$ in $\sfC$, or equivalently, 
if the commutative algebra object $\clF(\emptyset)$ in $\sfC$ is unital.
\end{dfn}


\subsection{SUSY factorization algebras} 

Following the argument in \cite{CG1}, 
we define a \emph{SUSY factorization algebra} to be 
a SUSY prefactorization algebra which satisfies 
the \emph{Weiss} (\emph{cosheaf}) \emph{descent} condition 
after extension from the superconformal disk basis to the Weiss site of opens.
Let us make this precise as follows.

\begin{dfn}
Let $X$ be an $N=1$ SUSY curve, and let $U\in \Disk^{\SUSY}(X)$.
A family of morphisms (inclusions)
$\{j_\alpha\colon U_\alpha \inj U\}_{\alpha \in A}$
is called a \emph{Weiss cover} of $U$ 
if the following holds on the reduced spaces:
for every finite subset $S$ of the underlying topological space
$\abs{U_{\tred}}$ of the reduced manifold $U_{\tred}$, 
there exists $\alpha\in A$ such that $S \subset \abs{(U_\alpha)_{\tred}}$.
\end{dfn}

We denote by $J_{\Weiss}^{\SUSY}$ the Grothendieck topology 
on $\Disk^{\SUSY}(X)$ generated by the Weiss covering families. 
In other words, a sieve $R$ on $U$ is declared to be a covering if it contains
the sieve generated by some Weiss cover $\{U_\alpha \inj U\}_{\alpha\in A}$.
The resulting site is denoted by $(\Disk^{\SUSY}(X),J_{\Weiss}^{\SUSY})$.

\begin{dfn}
\label[dfn]{dfn:1:SFA}
An \emph{$N=1$ SUSY factorization algebra} on $X$ valued in a symmetric monoidal
($\infty$-)category $\sfC$ is a SUSY prefactorization algebra $\clF$ 
such that the underlying covariant functor
$\clF\colon \Disk^{\SUSY}(X) \to \sfC$
is a cosheaf for the Grothendieck topology $J^{\SUSY}_{\Weiss}$.
\end{dfn}

Concretely, for every object $U\in\Disk^{\SUSY}(X)$ and every Weiss cover
$\{U_\alpha \inj U\}_{\alpha\in A}$, the canonical map
from the (homotopy) \v{C}ech colimit to $\clF(U)$ is an equivalence in $\sfC$:
\[
 \hocolim\Bigl( 
  \coprod_{\alpha,\beta}\clF(U_\alpha\cap U_\beta) \rightrightarrows
  \coprod_{\alpha}\clF(U_\alpha) \Bigr)
 \lsto \clF(U).
\]

\subsection{$S^1$-equivariance and holomorphically super-translation-invariance}

The construction in \cite[Chap.\ 5]{CG1} is applied to 
a prefactorization algebra on $\bbC$ satisfying several conditions.
To make a SUSY analogue, let us begin with:

\begin{dfn}
\label[dfn]{dfn:1:Der}
Let $\clF$ be an $N=1$ SUSY prefactorization algebra on an $N=1$ SUSY curve $X$
with values in a symmetric monoidal dg category $\sfC$. 
A \emph{derivation} of (cohomological) degree $k$ is a natural transformation
$\delta\colon \clF \to \clF[k]$ such that 
for every $U=\bigsqcup_{i=1}^n U_i \in \Disk^{\SUSY}(X)$ 
there is a graded Leibniz rule with respect to the factorization products:
\[
 \delta \circ \mu_{U} = \sum_{i=1}^n  (-1)^{k(\deg x_1+\cdots+\deg x_{i-1})}\,
 \mu_U \circ (\id\otimes\cdots\otimes \delta \otimes\cdots\otimes \id),
\]
where 
$\mu_U \colon \clF(U_1)\otimes\cdots\otimes\clF(U_n) \to \clF(\bigsqcup_i U_i)$
is the structure map \eqref{eq:1:str}.
We write $\Der(\clF)$ for the dg Lie superalgebra of derivations.
\end{dfn}

Hence, a morphism $\frg \to \Der(\clF)$ of dg Lie superalgebras 
encodes the $\frg$-symmetry of $\clF$.

Next, we introduce a dg Lie superalgebra 
encoding the $N=1$ superconformal geometry.
Hereafter we take $X=\sC$ with coordinates $(z,\zeta)$.
We consider odd vector fields 
\[
 D \ceq \pd_{\zeta}+\zeta\pd_z. \quad 
 E \ceq z \pd_z + \tfrac{1}{2} \zeta \pd_\zeta,
\]
where $D$ is from \eqref{eq:1:D}, 
and $E$ is the infinitesimal generator (Euler operator)
of the $S^1$-action on $\sC$, 
$e^{i\phi}\cdot (z,\zeta) \ceq (e^{i\phi}z,\ e^{i\phi/2}\zeta)$.
Then, using the super-commutator $[\cdot,\cdot]$ of vector fields, we have 
\begin{align}\label{eq:1:DE}
  [D,D]=2\pd_z, \quad [E,\pd_z]=-\pd_z,\quad [E,D]=-\tfrac{1}{2} D.
\end{align}

\begin{dfn}
Let $\frt^{\SUSY}_{\mathrm{hol},S^1}$ be the dg Lie superalgebra generated by:
\begin{itemize}
\item 
even degree $0$ elements $\pd_z$, $\pd_{\ol{z}}$, $E$, $\ol{E}$;
\item 
odd degree $0$ elements $D$, $\ol{D}$;
\item 
even degree $-1$ element $\eta_{\ol{z}}$ and 
odd degree $-1$ element $\eta_{\ol{D}}$;
\item 
even degree $-1$ element $\eta_{\ol{E}}$;
\end{itemize}
with differential determined by
\[
 d(\eta_{\ol{z}})=\pd_{\ol{z}}, \quad 
 d(\eta_{\ol{D}})=\ol{D}, \quad 
 d(\eta_{\ol{E}})=\ol{E},
\]
and brackets extending those of the corresponding vector fields, 
in particular \eqref{eq:1:DE} and similarly for the barred generators.
\end{dfn}

Now, the following SUSY analogue makes sense. 

\begin{dfn}\label[dfn]{dfn:1:SETI}
Let $\clF$ be a SUSY prefactorization algebra on $\sC$ 
valued in a symmetric monoidal dg category $\sfC$. 
We say that $\clF$ is 
\emph{$S^1$-equivariant and holomorphically super-translation invariant} if
there is an action of the dg Lie superalgebra 
$\frt^{\SUSY}_{\hol,S^1}$ by derivations:
\[
 \rho\colon \frt^{\SUSY}_{\hol,S^1} \lto \Der(\clF),
\]
where $\Der(\clF)$ denotes the derivations of $\clF$ (\cref{dfn:1:Der}).

%
%
\end{dfn}

In particular, setting $T,S\in\Der^0(\clF)$ and 
$h_{\ol{z}},h_{\ol{D}},h_{\ol{E}}\in\Der^{-1}(\clF)$ by 
\begin{align}\label{eq:1:TS}
 T \ceq \rho(\pd_z), \quad 
 S \ceq \rho(D), \quad 
 h_{\ol{z}} \ceq \rho(\eta_{\ol{z}}), \quad 
 h_{\ol{D}} \ceq \rho(\eta_{\ol{D}}), \quad 
 h_{\ol{E}} \ceq \rho(\eta_{\ol{E}}),
\end{align}
the degree 0 derivations satisfy the same brackets as \eqref{eq:1:DE}: 
\[
 [S      ,S]=2T, \quad 
 [\rho(E),T]=-T, \quad 
 [\rho(E),S]=-\tfrac{1}{2}S, 
\]
and the anti-holomorphic generators act homotopically trivially:
\[
 [d_{\clF},h_{\ol{z}}]=\rho(\pd_{\ol{z}}),\quad
 [d_{\clF},\rho(\eta_{\ol{D}})]=\rho(\ol{D}),\quad
 [d_{\clF},\rho(\eta_{\ol{E}})]=\rho(\ol{E}),
\]
where $d_{\clF}$ is the internal differential of $\clF$.

\subsection{SUSY analogue of the Costello--Gwilliam construction}
\label{ss:1.4}

Here we give a natural $N=1$ SUSY analogue of the construction of vertex algebras
from factorization algebras by Costello and Gwilliam \cite[Theorem 5.3.3]{CG1}.

We begin with the value of prefactorization algebras.
Following \cite{CG1}, we take
\[
 \sfC \ceq \Ch(\DVS),
\]
the category of cochain complexes in the symmetric monoidal category $\DVS$ of
differentiable vector spaces \cite[Appendix B]{CG1}.
We call the $\bbZ$-grading on $\sfC$ the cohomological degree.
It has the symmetric monoidal structure $\otimes$, 
which is the degree-wise extension of the completed tensor product in $\DVS$.
The unit object is denoted by $\bm{1}$.
The category $\sfC$ also has a model structure, 
where weak equivalences are quasi-isomorphisms.

For $r \in \bbR_{>0}$, we denote by $\bbD_r$ 
the open disk in $\bbC$ with center $0$ and radius $r$,
and by $\sD_r$ the $N=1$ SUSY disk with underlying space $\sD_r$
and the distribution generated by $D_Z=\pd_\zeta+\zeta\pd_z$.
We also denote by $\sD_r(Z)$ the SUSY disk with center $Z=(z,\zeta)$.
More explicitly, using the super-translation $\tau^{\SUSY}_Z$, we define 
\[
 \sD_r(Z) \ceq \tau^{\SUSY}_{Z}\bigl(\sD_r\bigr), \quad 
 \tau^{\SUSY}_{(z,\zeta)}(w,\omega) \ceq (w+z+\zeta\omega,\omega+\zeta),
\]
noting that $\tau^{\SUSY}_Z$ preserves $D_Z$ (pushes $D_Z$ to itself).

Let $\clF$ be an $N=1$ SUSY prefactorization algebra on $\sC$ 
valued in $\sfC=\Ch(\DVS)$.
We consider the following conditions.
\begin{clist}
\item \label{i:1:u}
$\clF$ is unital (\cref{dfn:1:unital}).
We denote the unit map by $\bm{1} \to \clF(\emptyset)$.

\item \label{i:1:SETI}
$\clF$ is $S^1$-equivariant and holomorphically super-translation invariant 
(\cref{dfn:1:SETI}).
We use the symbols $T\ceq\rho(\pd_z)$ and $S\ceq\rho(D)$ from \eqref{eq:1:TS}.

\item \label{i:1:di}
(Disk independence) 
For any $s \ge r > 0$, the extension maps $\clF(\sD_r) \to \clF(\sD_s)$ 
induced by the inclusion $\sD_r \inj \sD_s$ are quasi-isomorphisms in $\sfC$.

\item \label{i:1:rho(E)}
(Weight boundedness)
The cohomology $\bbV \ceq H^{\bl}\!\bigl(\clF(\sD_1)\bigr)$
is a locally finite, lower-bounded $\thf\bbZ$-graded module for 
the infinitesimal super-rotation operator $\rho(E)$.

\item \label{i:1:hol}
(Holomorphic dependence) 
Fix radii $0<r\ll R$. 
For each $n \ge 1$ and each configuration $\ul{x}=(Z_1;\dots;Z_n)$
such that the embedded SUSY disks $\sD_r(Z_i) \subset \sD_R$ 
are pairwise disjoint, let
\begin{align}\label{eq:1:mrRn}
 m_{r,R,n}(\ul{x})\colon \clF\bigl(\sD_r\bigr)^{\otimes n} \lto 
 \bigotimes_{i=1}^n \clF\bigl(\sD_r(Z_i)\bigr) \xrr{\mu} 
 \clF\left(\bigsqcup_{i=1}^n \sD_r(Z_i)\right)
 \lto \clF\bigl(\sD_R\bigr),
\end{align}
where 
the first arrow is the structure map obtained by 
super-translating $\sD_r$ to each $Z_i$, 
$\mu$ denotes the factorization product, 
and the last arrow uses functoriality for the inclusion 
$\bigsqcup_{i=1}^n\sD_r(Z_i) \inj \sD_R$.
Then, we require:
\begin{enumerate}
\item[(a)] 
$m_{r,R,n}(\ul{x})$ depends smoothly on the reduced parameters $(z_i,\ol{z}_i)$
(in the $\DVS$ sense) and polynomially on the odd parameters $\zeta_i$;

\item[(b)]
(\emph{Cauchy--Riemann up to homotopy}) 
for each $i=1,\dotsc,n$ there exists a multilinear map
\[
 h^{(i)}_{r,R,n}(\ul{x})\colon
 \clF\bigl(\sD_r(0)\bigr)^{\otimes n} \lto 
 \clF\bigl(\sD_R(0)\bigr)
\]
of degree $-1$, depending smoothly on $\underline{x}$ such that
\[
 \pd_{\ol{z}_i} m_{r,R,n}(\ul{x}) =
 d_{\clF} \circ h^{(i)}_{r,R,n}(\ul{x}) + 
 h^{(i)}_{r,R,n}(\ul{x}) \circ d_{\clF},
\]
where $d_{\clF}$ denotes the differentials on the source and target complexes.
\end{enumerate}
Consequently, the induced operations on cohomology 
are holomorphic in the variables $z_i$.
\end{clist}

\begin{rmk}
If we further assume that $\rho(E-\ol{E})$ exponentiates to an $S^1$-action 
by automorphisms on the underlying prefactorization algebra of $\clF$, 
then the condition \ref{i:1:rho(E)} can be replaced by:
The $S^1$-action on $V:=\clF(\bbD_1^{1|1})$ yields 
a decomposition into generalized eigenspaces (weights) 
bounded below and compatible with the differential.
\end{rmk}

Recall the axiom of an $N_K=1$ SUSY vertex algebra 
from \cite[Definition 4.10]{HK}.
Now, we have:

\begin{thm}[{SUSY analogue of \cite[Theorem 5.3.3]{CG1}}]
\label{thm:1:533}
Let $\clF$ satisfy the conditions \ref{i:1:u}--\ref{i:1:hol}.
Then, 
\[
 \bbV \ceq H^{\bl}\!\bigl(\clF(\bbD_1^{1|1})\bigr)
\]
carries a structure of an $N=1$ SUSY vertex algebra.
More precisely:
\begin{enumerate}
\item \label{i:thm:1:533:parity}
The parity is given by the cohomology grading on $\bbV$ modulo $2$.

\item 
The vacuum $\bm{1}\in \bbV$ is induced by 
the unit map $\bm{1}\to \clF(\emptyset)$ in the condition \ref{i:1:u} 
followed by functoriality $\clF(\emptyset)\to \clF(\bbD_1^{1|1})$.

\item 
The even and odd translation operators on $\bbV$ are induced by 
$T=\rho(\pd_z)$ and $S=\rho(D)$ in the condition \ref{i:1:SETI}, 
and satisfy $S^2=T$ on cohomology.

\item (state-superfield correspondence) 
For each $a\in\bbV$ there is a \emph{superfield} 
\begin{align}\label{eq:1:YaZ}
 Y(a;Z) = Y(a;z,\zeta) \in \End(\bbV)\dbr{z^{\pm1}}[\zeta]
\end{align}
obtained from the factorization products for two small SUSY disks 
inside a larger SUSY disk (see Proof below for the detail).
Moreover, the correspondence $a \mto Y(a;Z)$ is parity-preserving.

\item (SUSY locality) 
For any $a,b\in\bbV$ there exists $N \in \bbZ_{\ge0}$ such that
\begin{align}\label{eq:1:Sloc}
  z_{12}^N \bigl[Y(a;Z_1),Y(b;Z_2)\bigr] = 0,
\end{align}
where $z_{12} \ceq z_1-z_2-\zeta_1\zeta_2$ and 
$[\cdot,\cdot]$ denotes the super-commutator.
\end{enumerate}
Moreover, the assignment $\clF \mto \bbV$ is functorial,
which will be called the \emph{extraction functor}. 
\end{thm}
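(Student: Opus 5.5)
The argument follows the proof of \cite[Theorem 5.3.3]{CG1}, adding the odd coordinate at each step. The structural data come first. Condition \ref{i:1:di} identifies $H^{\bl}(\clF(\sD_r))$ with $\bbV$ for every $r$, compatibly with the extension maps. The vacuum is the image of the unit from \ref{i:1:u} under $\clF(\emptyset)\to\clF(\sD_1)$. Since $\rho$ acts by derivations commuting with $d_{\clF}$ (the degree $0$ generators are closed in $\frt^{\SUSY}_{\hol,S^1}$), the operators $T=\rho(\pd_z)$ and $S=\rho(D)$ descend to $\bbV$. The relation $[S,S]=2T$, i.e.\ $S^2=T$, already holds on the nose, and $[\rho(E),T]=-T$ and $[\rho(E),S]=-\thf S$ show that $T$ and $S$ have weights $1$ and $\thf$. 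Using \ref{i:1:rho(E)}, I decompose $\bbV=\bigoplus_{\Delta\in\thf\bbZ}\bbV_\Delta$ into finite-dimensional $\rho(E)$-eigenspaces bounded below, and I let $\wh{\bbV}=\prod_\Delta\bbV_\Delta$ be the completion. Parity is cohomological degree mod $2$; all maps below are built from degree-$0$ structure maps, so they preserve parity.

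Next comes the two-point function and the superfield. For $Z_1,Z_2$ with $\sD_r(Z_1),\sD_r(Z_2)\subset\sD_R$ disjoint, the map $m_{r,R,2}(Z_1;Z_2)$ of \eqref{eq:1:mrRn} induces $\bbV\otimes\bbV\to\bbV$ on cohomology. By \ref{i:1:hol}(b) it is annihilated by $\pd_{\ol z_i}$ in cohomology, so it is holomorphic in $z_1,z_2$, and by (a) it is polynomial (in fact affine) in $\zeta_1,\zeta_2$. Super-translation invariance then reduces the dependence to a single point. Differentiating the configuration in $Z_2$ is intertwined with $T$ and $S$: the super-translation $\tau^{\SUSY}$ is generated by the right-invariant vector fields, and the equivariance encoded by $\rho$ gives
\[
 m(Z_1;Z_2)(a\otimes b)=e^{z_2T+\zeta_2S}\, m(\tau^{\SUSY}_{-Z_2}Z_1;0)(a\otimes b).
\]
Here $\tau^{\SUSY}_{-Z_2}Z_1=(z_1-z_2-\zeta_1\zeta_2,\zeta_1-\zeta_2)=(z_{12},\zeta_{12})$, up to sign conventions. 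I then define $Y(a;Z)b\ceq m(Z;0)(a\otimes b)$ for $0<\abs{z}<R-2r$, which is a holomorphic function on a punctured disk with values in $\wh{\bbV}$. The $S^1$-equivariance, again using $\rho(\ol{E})\simeq 0$ via $\eta_{\ol E}$, gives $\rho(E)$-homogeneity. Concretely, projecting to $\bbV_\Delta$, the component $\pi_\Delta Y(a;Z)b$ with $a\in\bbV_\alpha$, $b\in\bbV_\beta$ is homogeneous of weight $\Delta-\alpha-\beta$ under $z\mapsto\lambda z$, $\zeta\mapsto\lambda^{1/2}\zeta$. A holomorphic homogeneous function on a punctured disk is a monomial $z^n$ or $z^n\zeta$. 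This yields $Y(a;Z)\in\End(\bbV)\dbr{z^{\pm1}}[\zeta]$, and lower-boundedness of weights gives the truncation $Y(a;Z)b\in\bbV\dbr{z}[z^{-1}][\zeta]$. Vacuum axioms: inserting $\vac$ comes from the empty disk, so $Y(\vac;Z)=\id$. The map $Y(a;Z)\vac$ extends over $Z=0$, since $\sD_r(Z)\inj\sD_R$ is defined for all small $Z$, with value $a$ at $Z=0$ by \ref{i:1:di}. Translation covariance $[T,Y(a;Z)]=\pd_zY$ and $[S,Y(a;Z)]=(\pd_\zeta-\zeta\pd_z)Y$ follows because $\rho$ acts by derivations, i.e.\ through the Leibniz rule of \cref{dfn:1:Der} applied to $\mu$.

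For SUSY locality I use the three-point function. The map $m_{r,R,3}(Z_1;Z_2;0)$ on cohomology is holomorphic on the configuration space $\{\abs{z_i}>2r,\ \abs{z_1-z_2}>2r\}$. By associativity of the factorization products it expands as $Y(a;Z_1)Y(b;Z_2)c$ in the region $\abs{z_1}>\abs{z_2}$ and as $\pm Y(b;Z_2)Y(a;Z_1)c$ in the region $\abs{z_2}>\abs{z_1}$, with the Koszul sign coming from the symmetric monoidal structure. The weight bound bounds the pole order along $z_{12}=0$ uniformly in the matrix coefficient. This is because super-translating to $Z_2$ and using the OPE $Y(a;Z_{12})b$ forces the singularity to be of the form $z_{12}^{-N}$ with $N$ depending only on the weights of $a$ and $b$. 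Multiplying by $z_{12}^N$ therefore gives a single function regular at the diagonal whose two expansions agree, and this is \eqref{eq:1:Sloc}. Functoriality is clear: a morphism $\clF\to\clG$ of prefactorization algebras compatible with $\rho$ and the unit commutes with all the maps $m_{r,R,n}$.

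The main obstacle is the analytic step. One must show that the three-point function is a genuine holomorphic function on the super configuration space, with the odd variables entering only through the translation-invariant combinations $z_{12}$ and $\zeta_1-\zeta_2$, and with its singularity at the super diagonal being a finite-order pole in $z_{12}$ rather than in $z_1-z_2$. I would attack this by first translating $Z_2$ to $0$ via the displayed covariance formula, which reduces everything to the purely even analysis of \cite[\S5.3]{CG1} applied separately to the $\zeta_1^0$ and $\zeta_1^1$ components, and then using that $z_1-z_2$ and $z_{12}$ differ by a nilpotent term.
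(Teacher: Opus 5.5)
Your proposal is correct at the level of rigor of the paper's own argument, and it follows the same route: the Costello--Gwilliam template. You define $Y(a;Z)b$ from the two-disk map $m_{r,R,2}(Z;0)$, get holomorphy from condition (v)(b), obtain Laurent expansions with bounded poles from $S^1$-equivariance and weight boundedness, and prove SUSY locality by comparing the two expansions of the three-point function after translating $Z_2$ to $0$. Your explicit pole bound via the expansion $Y(Y(a;Z_{12})b;Z_2)c$, and your reduction of $z_{12}$-locality to $(z_1-z_2)$-locality using the nilpotency of $\zeta_1\zeta_2$, only sharpen the paper's Steps 2--3 rather than change the approach.
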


\begin{proof}
The argument is a straightforward SUSY analogue of 
the original \cite[Theorem 5.3.3]{CG1}, 
and we only show the outline and the necessary modification.

Fix radii $0 < r \ll 1 < R$, and let $\sD_r$ and $\sD_r(Z)$ denote 
small SUSY disks around $0$ and around $Z=(z,\zeta)$ inside $\bbD_R^{1|1}$.
Fix $a,b \in \bbV$, and choose cocycle representatives 
$\wt{a},\wt{b} \in \clF(\sD_r)$ (using disk independence \ref{i:1:di}).
For $Z=(z,\zeta)$ such that $\sD_r$ and $\sD_r(Z)$ are disjoint in $\sD_R$,
consider the composite $m_{r,R,2}(Z;0)$ 
given in \eqref{eq:1:mrRn}.
Then, we define $Y(a;Z)b \in \bbV$ by  
\begin{align}\label{eq:1:YaZb}
 Y(a;Z)b \ceq \bigl[m_{r,R,2}(Z;0)(\wt{a} \otimes \wt{b})\bigr]
 \in H^{\bl}\!(\sD_R) \simeq \bbV.
\end{align}
where we used disk independence \ref{i:1:di}.

We can verify the vacuum axiom easily as in \cite[\S5.3.4, p.166]{CG1}.
For the SUSY translation axiom, note that the condition \ref{i:1:SETI} 
implies that $\clF$ is a module over the dg Lie superalgebra of 
holomorphic super-translations (subalgebra of $\frt^{\SUSY}_{\hol,S^1}$), 
and the structure maps $m_{r,R,2}(Z;0)$ are morphisms in that module category.
In particular, for $D_Z=\pd_\zeta+\zeta\pd_z$ and $S=\rho(D_Z)$, we have
\[ 
 D_Z m_{r,R,2}(Z;0)(\wt{a} \otimes \wt{b}) = 
 m_{r,R,2}(Z;0)(\rho(D_z)\wt{a} \otimes \wt{b}).
\]
Taking the cohomology as in \eqref{eq:1:YaZb}, we have $D_ZY(a;Z)=Y(Sa;Z)$.
Then, the other parts of the axiom are easily verified.

Below, we prove in four steps that the factorization axioms imply SUSY locality.

\smallskip\noindent
\textbf{Step 1} (single meromorphic function on configuration space). 
This step is similar as the first half of \cite[Proof of Proposition 5.3.6]{CG1}.
Consider the three-disk insertion map
$m_{r,R,3}$ for SUSY disks at $(0,0)$, $(z_1,\zeta_1)$, $(z_2,\zeta_2)$,
defined in \eqref{eq:1:mrRn}.
Using the prefactorization associativity axiom, 
the two composites corresponding to the two parenthesizations agree on overlaps.
By the condition \ref{i:1:hol} (Cauchy--Riemann up to homotopy), 
for any cohomology classes $a,b$, any $l\in\bbZ$, and any linear functional 
$\lambda$ on $H^{l}(\clF(\bbD_R^{1|1}))$, the matrix coefficient
\begin{align}\label{eq:1:flab}
 f_{\lambda,a,b}(z_1,\zeta_1;z_2,\zeta_2) \ceq 
 \lambda\!\left(
 \bigl[m_{r,R,2}(z_1,\zeta_1;z_2,\zeta_2)(\wt{a} \otimes \wt{b})\bigr]\right)
\end{align}
is holomorphic in $z_1,z_2$ on the reduced configuration space $\{z_1\ne z_2\}$,
and polynomial in $\zeta_1,\zeta_2$.

\smallskip\noindent
\textbf{Step 2} (SUSY holomorphicity).
This step needs SUSY consideration.
Using holomorphic super-translation invariance \ref{i:1:SETI}, 
we may translate the configuration so that $(z_2,\zeta_2)=(0,0)$ 
and the dependence is only on the super-translation invariant differences
\begin{align}\label{eq:1:Z12}
 Z_1 - Z_2 = (z_{12},\zeta_{12}) \ceq 
(z_1-z_2-\zeta_1\zeta_2,\zeta_1-\zeta_2).
\end{align}
Therefore, the functions \eqref{eq:1:flab} are holomorphic 
on $\{z_{12} \ne 0\}$, which implies that the super-commutator 
$[Y(a,Z_1),Y(b;Z_2)]$ can be expanded in terms of $z_{12}$. 

\smallskip\noindent
\textbf{Step 3} (finite pole order from $S^1$-finiteness).
This step is similar as \cite[Lemma 5.3.5]{CG1}. 
By weight boundedness \ref{i:1:rho(E)}, $\bbV$ decomposes into 
finite-dimensional generalized weight spaces bounded below.
$S^1$-equivariance \ref{i:1:SETI} implies that $Y(a;z,\zeta)b$ admits 
a Laurent expansion in $z$ whose coefficients lie in weight spaces. 
Bounded-below weights force 
only finitely many negative powers of $z$ for fixed $a,b$. 
Hence, together with Step 2, 
there exists $N \gg 0$ such that $z_{12}^N$ kills all polar parts.

Note that, at this point, for any $a\in\bbV$, $Y(a;Z)$ admits an expansion
\[
 Y(a;Z) = \sum_{n\in\bbZ} z^{-n-1}a_{(n|0)} \;+\;
    \zeta \sum_{n\in\bbZ} z^{-n-1}a_{(n|1)}, \quad 
 a_{(n|p)} \in \End(\bbV).
\]

\smallskip\noindent
\textbf{Step 4} (locality).
This step is quite similar as \cite[Proposition 5.3.6]{CG1}, 
and we only give an outline.
The super-commutator $[Y(a,Z_1),Y(b;Z_2)]$ compares the two expansions 
corresponding to the regions $|z_1|>|z_2|$ and $|z_2|>|z_1|$. 
Since both are expansions of the same meromorphic function (Steps 1--2), 
their difference is supported on the super-diagonal $\{z_{12} = 0\}$, 
and Step 3 shows it is annihilated by $z_{12}^N$. 
Hence, we have the SUSY locality \eqref{eq:1:Sloc}.
\end{proof}

\begin{rmk}\label[rmk]{rmk:1:DVS}
We take the target category $\sfC$ 
to be cochain complexes in differentiable vector spaces, 
which results in the setting \ref{i:thm:1:533:parity} of parity of $\bbV$.
A variation is to take $\sfC$ to be cochains 
in differentiable vector \emph{superspaces},
which results in a shift of parity of $\bbV$.
We omit the details of this variation, and 
refer the reader to \cite[\S3.3]{N2} for further information.
\end{rmk}

\subsection{SUSY Poisson factorization algebra}

We continue to work on $\sfC \ceq \Ch(\DVS)$.
Let $\CAlg(\sfC)$ denote the category of commutative algebra objects in $\sfC$.
Its object is a commutative dg algebra in $\sfC$, 
and a morphism is a dg algebra map.
 
\begin{dfn}\label[dfn]{dfn:1:comSPFA}
An $N=1$ SUSY prefactorization algebra $\clF$ on $\sC$ (valued in $\sfC$)
is called \emph{commutative} if it values in $\CAlg(\sfC)$.
\end{dfn}

In other words, a commutative $N=1$ SUSY prefactorization algebra $\clF$ 
on $\sC$ is a symmetric lax monoidal functor 
$\clF\colon \Disk^{\SUSY}(\sC) \to \sfC$
such that for every $U\in\Disk^{\SUSY}(\sC)$, 
$\clF(U)$ is a commutative dg algebra structure
and all structure maps are algebra maps.


To obtain a Poisson structure, let us consider:

\begin{dfn}\label[dfn]{dfn:1:SPPFA}
An \emph{$N=1$ SUSY Poisson prefactorization algebra} on $\sC$ 
is a commutative $N=1$ SUSY prefactorization algebra $\clF$ on $\sC$  
together with a dg Lie bracket $\{\cdot,\cdot\}$ such that 
\begin{clist}
\item \label{i:dfn:1:SPPFA:obj}
for each $U\in\Disk^{\SUSY}(\sC)$, $\clF(U)$ is a dg Poisson algebra, and 
\item \label{i:dfn:1:SPPFA:mor}
each extension map $\clF(U) \to \clF(V)$ is a morphism of dg Poisson algebras.
\end{clist}
\end{dfn}

Now we have a Poisson analogue of \cref{thm:1:533}:

\begin{thm}\label{thm:1:SPVA}
Let $\clF$ be a Poisson SUSY factorization algebra on $\sC$ 
with Poisson bracket 
$\{\cdot,\cdot\}\colon \clF(U) \otimes \clF(U)\to \clF(U)$.
Assume that $\clF$ satisfies the same conditions \ref{i:1:u}--\ref{i:1:hol}
used in \cref{thm:1:533} to construct 
the $N=1$ SUSY vertex superalgebra structure on
\[
 \bbV \ceq H^{\bl}\!\bigl(\clF(\sD_1)\bigr).
\]
Then $\bbV$ carries a structure of an $N_K=1$ SUSY Poisson vertex algebra
in the sense of \cite[\S4.10]{HK}, \cite[\S3.1.4]{Y}.
More precisely: 
\begin{enumerate}
\item
Let $Y(a;Z)$ denote the superfield \eqref{eq:1:YaZ} for $a\in\bbV$ 
and $Z=(z,\zeta)$ produced by the non-Poisson construction of \cref{thm:1:533}.
Then, for any $a,b\in\bbV$, we have
\begin{align}\label{eq:1:Yser}
 Y(a;Z)b \in \bbV\dbr{z}[\zeta],
\end{align}
so that $\bbV$ is a commutative SUSY vertex algebra 
in the sense of \cite[\S2.4.1]{Y}.

\item 
Let $\Lambda=(\lambda,\chi)$ with $\lambda$ even and $\chi$ odd, 
subject to the $N_K=1$ relation $\chi^2=-\lambda$, 
so $\bbC[\Lambda]=\bbC[\lambda]\oplus\chi\,\bbC[\lambda]$. 
Define a bilinear map (the SUSY $\Lambda$-bracket)
\[
 \{-_\Lambda-\}\colon \bbV \otimes \bbV \lto \bbV[\Lambda]
\]
by the super-residue of the singular part of the two-point Poisson OPE:
\begin{align}\label{eq:1:P}
 \{a_\Lambda b\} \ceq
 \sres_{Z=0} \Bigl(e^{z\lambda+\zeta\chi}\,
 \bigl(\{\,Y(a;Z),\,b\,\}\bigr)_{\sing}\Bigr),
\end{align}
where $\sres_{Z=0}$ is the super-residue
(or the Berezin residue, the coefficient of $z^{-1}\zeta$), 
and $(-)_{\sing}$ denotes the singular part in $z$.
Then $(\bbV,\{\cdot_\Lambda \cdot\})$ satisfies 
the $N_K=1$ SUSY Lie conformal identities \cite[Definition 4.10]{HK}, 
and $\{\cdot_\Lambda \cdot\}$ is a derivation with respect to 
the super-commutative product $\cdot$. 
\end{enumerate}
\end{thm}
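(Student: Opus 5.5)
We split the statement into the commutative part (1) and the $\Lambda$-bracket part (2), running the construction of \cref{thm:1:533} on $\clF$ with its commutative structure remembered.

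\textbf{Part (1).} Since $\clF$ is commutative (\cref{dfn:1:comSPFA}), each factorization product $\mu$ is induced by the algebra maps $\clF(U_i)\to\clF(\bigsqcup U_i)$ followed by multiplication. Hence
\[
 m_{r,R,2}(Z;0)(\wt a\otimes\wt b)=\clF(j_Z)(\wt a)\cdot \clF(j_0)(\wt b),
\]
where $j_Z\colon\sD_r(Z)\inj\sD_R$ and $j_0\colon\sD_r\inj\sD_R$ are the inclusions. The first factor makes sense for every $Z$ with $\sD_r(Z)\subset\sD_R$, including $Z=0$, so it does not require disjointness from $\sD_r$. By \ref{i:1:hol}, its class depends holomorphically on $z$ (via the homotopies $h^{(i)}$) and polynomially on $\zeta$ on the whole disk. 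The cohomology product is well defined. So $Y(a;Z)b$ extends holomorphically across $z=0$ and lies in $\bbV\dbr{z}[\zeta]$, which is \eqref{eq:1:Yser}. The step that needs care is that the class of $\clF(j_Z)(\wt a)$ agrees with $a$ under disk independence \ref{i:1:di}. This follows from \ref{i:1:SETI}: super-translation acts by $\exp(zT+\zeta S)$, and $T,S$ are derivations. The axioms of a commutative SUSY vertex algebra in \cite[\S2.4.1]{Y} then follow. Associativity and supercommutativity of the product $a\cdot b\ceq Y(a;0)b$ come from commutativity of $\clF(\sD_R)$, and translation covariance comes from \ref{i:1:SETI}.

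\textbf{Part (2).} We define the ``Poisson OPE'' by
\[
 \{Y(a;Z),b\}\ceq\Bigl[\bigl\{\clF(j_Z)\wt a,\;\clF(j_0)\wt b\bigr\}\Bigr],
\]
where the bracket is taken in $\clF(\sD_R)$ and $Z\neq0$. This is a Poisson version of \eqref{eq:1:flab}. Since the extension maps are Poisson morphisms (\ref{i:dfn:1:SPPFA:mor}), this function is compatible with enlarging disks. Steps 1--3 of the proof of \cref{thm:1:533} apply verbatim, because holomorphicity, super-translation covariance and weight boundedness only use those structure maps. Thus $\{Y(a;Z),b\}$ is a meromorphic superfunction of $z_{12}$ with a bounded pole at $z=0$. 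Its singular part is therefore a finite sum $\sum_{n\ge0}z^{-n-1}(c_n+\zeta c'_n)$, and \eqref{eq:1:P} is a polynomial in $\Lambda$.

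We then transfer the identities, following the standard route from Poisson OPE to $\lambda$-bracket, now in super form as in \cite[\S4]{HK}.
\begin{itemize}
\item \textbf{Sesquilinearity} ($\{Sa_\Lambda b\}=\chi\{a_\Lambda b\}$, and its right-hand counterpart) comes from $D_ZY(a;Z)=Y(Sa;Z)$, proved in \cref{thm:1:533}. We integrate by parts under $\sres_{Z=0}$, using $D_Z e^{z\lambda+\zeta\chi}=\chi e^{z\lambda+\zeta\chi}$ with the sign conventions for $\chi^2=-\lambda$.
\item \textbf{Skew-symmetry} comes from antisymmetry of the bracket on $\clF(\sD_R)$, combined with super-translating by $-Z$ through $e^{-zT-\zeta S}$. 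This replaces $\Lambda$ by $-\Lambda-\nabla$, where $\nabla=(T,S)$.
\item \textbf{The Leibniz rule} holds because the bracket on each $\clF(U)$ is a biderivation of the product, and the extension maps are algebra maps.
\item \textbf{The Jacobi identity} follows from the Jacobi identity in $\clF(\sD_R)$ applied to three disks at $Z_1$, $Z_2$, $0$. We use the three-point function from Step 1 and take iterated super-residues, with the change of variables $Z_1\mapsto Z_1-Z_2$ of \eqref{eq:1:Z12}.
\end{itemize}

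\textbf{Main obstacle.} I expect the Jacobi identity to be the hardest step. One must check that the singular-part projection commutes with the iterated residues, and this uses locality from Step 4 to identify the expansions in $|z_1|>|z_2|$ and $|z_2|>|z_1|$. One must also keep track of the Koszul signs from odd $\zeta_i$ and $\chi$. I would first verify the identity at the level of the three-point meromorphic function and only then extract coefficients.
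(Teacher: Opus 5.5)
Your route is the paper's. Part (1) uses $m_{r,R,2}(Z;0)(\wt a\otimes\wt b)=\clF(j_Z)(\wt a)\cdot\clF(j_0)(\wt b)$, whose right-hand side still makes sense at $z=0$; using \ref{i:1:hol} with $n=1$ to get holomorphy on the whole disk is cleaner than the paper's appeal to ``uniqueness of holomorphic continuation''. Part (2) pushes the cochain-level skew-symmetry, Jacobi and Leibniz identities of the bracket on $\clF(\sD_R)$ through super-residues.

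There is a genuine gap in Part (2), which shows up once you write the two-point Poisson OPE down as $\{Y(a;Z),b\}\ceq[\{\clF(j_Z)\wt a,\clF(j_0)\wt b\}]$. Your own Part (1) argument then shows it has no pole:
\begin{itemize}
\item The dg bracket descends to cohomology, so this class is the bracket of the classes $[\clF(j_Z)\wt a]$ and $b$.
\item The class $[\clF(j_Z)\wt a]$ is holomorphic in $z$ on the whole disk, including $z=0$. It equals $e^{zT+\zeta S}a$, not $a$ as you wrote.
\item Hence $\{Y(a;Z),b\}\in\bbV\dbr{z}[\zeta]$, its singular part is zero, and \eqref{eq:1:P} gives $\{a_\Lambda b\}=0$ for all $a,b$.
\end{itemize}
So the ``bounded pole'' you extract from Steps 1--3 has an empty singular part $\sum_{n\ge0}z^{-n-1}(c_n+\zeta c_n')$. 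Everything after that establishes the identities only for the zero bracket. This is not a harmless edge case. In \cref{ss:2.6}, the bracket \eqref{eq:1:P0} of observables supported on disjoint disks already vanishes at cochain level, since it integrates $\pair{f,g}$ over the overlap of supports. So your definition cannot produce $\{\Phi^i{}_\Lambda\Pi_j\}=\delta^i_j$ of \cref{ss:2.8}. The paper's sketch also speaks only of the bracket of inserted cochains, so it leaves this unresolved too.

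What is missing is a genuine source for the singular part. In the free theory, $1/z_{12}$ appears only after identifying $\delta(Z_1-Z_2)$ with $\bpd$ of the super-Cauchy kernel, as in \cref{prp:2:sCauchy}. That means combining the bracket (which there is $1$-shifted) with the $\bpd$-homotopies $h_{\ol z},h_{\ol D}$ of \ref{i:1:SETI} and \ref{i:1:hol}. Alternatively, when a quantization is available, you can take $\{a_\Lambda b\}$ to be the $\hbar\to0$ limit of $\hbar^{-1}[a_\Lambda b]$. Either way, the $\Lambda$-bracket must be defined as such a secondary operation. Sesquilinearity, skew-symmetry and Jacobi then have to be verified for that operation; they do not follow verbatim from Steps 1--3 and the cochain-level identities.
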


At the last part, we used the equivalence \cite[\S3.1.4]{Y} 
between a commutative SUSY vertex algebra $(\bbV,\bm{1},S,Y)$ 
and a unital commutative superalgebra $(\bbV,\cdot,\bm{1})$ 
with odd derivation $S$.

\begin{proof}
First, we show the commutativity of $\bbV$.
Fix radii $0<r\ll 1<R$ and identify
\[
 \bbV = H^{\bl}\!\bigl(\clF(\sD_1)\bigr)\cong H^{\bl}\!\bigl(\clF(\sD_R)\bigr)
\]
using the disk independence \ref{i:1:di}.
For $Z=(z,\zeta)$ with $\sD_r$ and $\sD_r(Z)$ 
disjoint inside $\sD_R$, let
\[
 m(Z) \ceq m_{r,R,2}(Z;0)\colon 
 \clF(\sD_r) \otimes \clF(\sD_r) \lto \clF(\sD_R)
\]
denote the composition \eqref{eq:1:mrRn}, i.e., 
the factorization map obtained by inserting the first copy as $\sD_r$ 
and the second as $\sD_r(Z)$, and then extending to $\sD_R$.
By the holomorphic dependence assumption, 
$m(z,\zeta)$ depends holomorphically on $z$ up to homotopy, 
and polynomially on $\zeta$ on the domain $0<\abs{z}<\ve$, 
where the disks remain disjoint.

Since $\clF$ is a commutative factorization algebra, 
each $\clF(U)$ is a commutative dg algebra and
all structure maps are morphisms of commutative dg algebras. 
In particular, for any pairwise disjoint inclusions $U_1 \sqcup U_2 \inj V$,
the composition of the factorization product and the extension map
\[
 \clF(U_1) \otimes \clF(U_2) \xrr{\mu_{U_1 \sqcup U_2}} 
 \clF(U_1\sqcup U_2) \lto \clF(V)
\]
coincides with the multiplication $\cdot$ in $\clF(V)$ applied to 
the images of the extension maps $\iota_i\colon \clF(U_i) \inj \clF(V)$.
Applying this to $U_1=\sD_r(Z)=\sD_r(z,\zeta)$ and $U_2=\sD_r$ inside $V=\sD_R$
shows that, for $0<|z|<\ve$,
\[
 m(z,\zeta)(x \otimes y) = 
 \bigl(\iota_1(x)\bigr) \cdot \bigl(\iota_2(y)\bigr) \in \clF(\sD_R).
\]
Now, note that the right-hand side makes sense \emph{also at} $z=0$
since it only uses the multiplication $\iota$ in $\clF(\sD_R)$ 
and does not use the disjointness of the open SUSY disks, i.e.,
the condition $z \ne 0$.
Thus, we obtain a well-defined map
\[
 m(0,\zeta)\colon \clF(\sD_r) \otimes \clF(\sD_r) \lto \clF(\sD_R)
\]
given by multiplication after extension.
By uniqueness of holomorphic continuation, 
the holomorphic map $m(Z)=m(z,\zeta)$ on $0<|z|<\ve$ extends 
holomorphically across $z=0$ with value $m(0,\zeta)$. 
Hence $m(Z)$ has no pole at $z=0$.

Let us now recall the state-superfield correspondence 
\eqref{eq:1:YaZ}, \eqref{eq:1:YaZb}: 
\[
 Y(a;Z)b \ceq \bigl[m(z,\zeta)(\wt{a} \otimes \wt{b})\bigr] \in \bbV
 \qquad (0<|z|<\ve).
\]
Since $m(Z)$ extends holomorphically to $z=0$ 
and depends polynomially on $\zeta$,
the cohomology class $Y(a;Z)b$ is holomorphic in $z$ at $0$ 
and polynomial in $\zeta$.
Therefore, we have the desired property \eqref{eq:1:Yser},
and the first half is over. 

\medskip

We now turn to the second half.
Let $T$ and $S$ be the even and odd translation operators induced by 
$\pd_z$ and $D=\pd_\zeta+\zeta\pd_z$, respectively.
It remains to show that $(\bbV,T,S,\cdot,[-_\Lambda-])$ satisfies:
(i) sesquilinearity,
(ii) SUSY skew-symmetry,
(iii) SUSY Jacobi identity, and
(iv) Leibniz rule.
We only sketch the proof below.
\begin{clist}
\item 
Sesquilinearity. 
We need to prove 
\[
 \{(Ta)_\Lambda b\} = -\lambda\{a_\Lambda b\}, \quad 
 \{(Sa)_\Lambda b\} = -\chi   \{a_\Lambda b\},
\]
and similarly for $T,S$ acting on the second entry 
(with $T+\lambda$, $S+\chi$). 
These are shown by using the already-established SUSY translation axioms 
\[
 [T,Y(a;Z)] = \pd_z Y(a;Z), \quad 
 [S,Y(a;Z)] = (\pd_\zeta+\zeta\pd_z)Y(a;Z),
\]
together with the fact that the Poisson bracket on $\clF(U)$ 
is compatible with the symmetry action 
(\cref{dfn:1:SPPFA} \ref{i:dfn:1:SPPFA:mor}), 
and the differentiation under the super-residue 
in the definition \eqref{eq:1:P}:
\[
 \sres_{Z=0} e^{z\lambda+\zeta\chi}\pd_z(\cdots) = 
-\lambda \sres_{Z=0} e^{z\lambda+\zeta\chi}(\cdots), \qquad
 \sres_{Z=0} e^{z\lambda+\zeta\chi}\pd_\zeta(\cdots) = 
-\chi \sres_{Z=0} e^{z\lambda+\zeta\chi}(\cdots).
\]

\item 
SUSY skew-symmetry: 
\begin{align}\label{eq:1:Ssks}
 \{a_\Lambda b\} = -(-1)^{p(a) p(b) + 1} \{b_{-\Lambda-\nabla}a\}, \quad 
 -\Lambda-\nabla\ceq(-T-\lambda,-S-\chi),
\end{align}
where $p(a),p(b)$ denote the parity of $a,b$.
We consider the two-point Poisson operation $\{\cdot,\cdot\}$ 
on the configuration space with two insertion disks around 
$(z_1,\zeta_1)$ and $(z_2,\zeta_2)$.
There are two expansions corresponding to the two regions 
$|z_1|>|z_2|$ and $|z_2|>|z_1|$, related by exchanging the two disks.
Factorization and commutativity provide the interchange isomorphism. 
Then, we use the graded skew-symmetry 
of the Poisson bracket at the cochain level $\clF(U)$:
\begin{align}\label{eq:1:sdsks}
 \{A,B\}=-(-1)^{\abs{A} \abs{B}}\{B,A\},
\end{align}
where $A,B$ denote cocycles representing $a,b$, and 
$\abs{A},\abs{B}$ denote the cohomological degrees.
Now, the definition \eqref{eq:1:P} implies that the exchange 
$Z \leftrightarrow W$ yields the desired equality \eqref{eq:1:Ssks}.
Note that the sign in \eqref{eq:1:Ssks} arises from 
the sign in \eqref{eq:1:sdsks}, using $(-1)^{\abs{A}}=(-1)^{p(a)}$ and 
$(-1)^{\abs{B}}=(-1)^{p(b)}$ (\cref{thm:1:533} \ref{i:thm:1:533:parity}),  
together with an additional factor $-1$ from 
taking the $\zeta$-coefficient in \eqref{eq:1:P}.

\item SUSY Jacobi identity: 
\begin{align}\label{eq:1:SJ}
 \{a_\Lambda \{b_\Gamma c\}\} = 
 (-1)^{p(a)+1}          \{\{a_\Lambda b\}_{\Gamma+\Lambda}c\} +
 (-1)^{(p(a)+1)(p(b)+1)}\{b_\Gamma\{a_\Lambda c\}\}.
\end{align}
We consider the three-point Poisson operation on the configuration space of 
three disjoint SUSY disks in a big disk. 
Factorization associativity gives equality of the two iterated products 
corresponding to different parenthesizations on overlaps of the domains.
This is the same analytic continuation mechanism used to prove 
the locality in \cref{thm:1:533}, Step 4, or \cite[Proposition 5.3.6]{CG1}.
Then, we use the Jacobi identity of the dg Poisson bracket on $\clF(U)$:
\begin{align}\label{eq:1:dJ}
 \{A,\{B,C\}\} = \{\{A,B\},C\} + (-1)^{\abs{A}\abs{B}}\{B,\{A,C\}\},
\end{align}
applied to the three inserted cochains 
supported in the three disks (extended into the big disk).
This gives the Jacobi identity for the three-point Poisson OPE 
at the cochain level. 
Now, pass to cohomology and take the super-residues defining $[a_\Lambda b]$, 
The standard residue manipulations turn the Jacobi identity 
on the cochain level into the desired identity \eqref{eq:1:SJ}.
As in the proof of skew-symmetry,
the sign difference between \eqref{eq:1:SJ} and \eqref{eq:1:dJ} 
comes from taking the super-residue.

\item 
Leibniz rule:
\begin{align}\label{eq:1:L}
 \{a_\Lambda (b\cdot c)\} = 
 \{a_\Lambda b\}\cdot c + (-1)^{|a||b|} b\cdot \{a_\Lambda c\}.
\end{align}
The commutative factorization algebra structure on $\clF$ 
gives a graded-commutative product on each $\clF(U)$.
By \cref{dfn:1:SPPFA} \ref{i:dfn:1:SPPFA:obj}, 
the dg Poisson bracket on $\clF(U)$ is 
assumed to be a derivation for that product:
\[
 \{A,BC\} = \{A,B\}C + (-1)^{\abs{A}\abs{B}}B\{A,C\}.
\]
Since all structure maps are graded Poisson algebra maps 
(\cref{dfn:1:SPPFA} \ref{i:dfn:1:SPPFA:mor}), 
this property is compatible with extending cochains 
and with factorization products.
Now, we apply the residue definition \eqref{eq:1:P} of $\{a_\Lambda-\}$ to 
the cocycle representing $b \cdot c$ 
(product in the commutative SUSY vertex algebra $\bbV$). 
The derivation property passes through
the residue operation to yield the identity \eqref{eq:1:L}.
\end{clist}
Thus, the proof is finished.
\end{proof}

\section{\texorpdfstring{$N_K=1$}{$N=1$} source and linear target}
\label{s:2}

We consider the free holomorphic sigma model 
whose source is the standard $N=1$ SUSY curve $\sC$
and the target is the linear space $\bbC^{n}$.
We formulate it in the BV framework 
using superfields $\Phi^i$ and their conjugates $\Pi_i$,
with action given by the free holomorphic $bc$--$\beta\gamma$ system.
The classical observables form an $N=1$ SUSY Poisson factorization algebra 
on $\sC$, which is holomorphically super-translation invariant
and $S^{1}$-equivariant. 
After BV quantization, the local cohomology acquires a natural $N_{K}=1$ 
SUSY vertex algebra structure, identified with the free-field realization 
underlying the chiral de Rham complex.

\subsection{The source as a derived space}\label{ss:2.1}

We follow \cite[Part 1, Chap.\ 2--3]{D+} 
for the terminology on super (differential) geometry.
Let $\Sigma = \bbC$ be the complex line with holomorphic coordinate $z$
and anti-holomorphic $\ol{z}$. 
Consider the dg ringed supermanifold $\sSig \ceq (\Sigma,A_{\sSig})$, where 
\[
 A_{\sSig} \ceq 
 \bigl(\Omega^{0,*}(\Sigma) \otimes \Wedge(\zeta),d_{\sSig}\bigr), 
 \quad 
 d_{\sSig} \ceq \pd_{\ol{z}}
\]
is the Dolbeault complex of smooth forms of Hodge type $(0,*)$ 
added with one odd coordinate $\zeta$.
The symbol $\Lambda(\zeta)$ denotes 
the polynomial superspace generated by $\zeta$,
so that we have $\Lambda(\zeta)=\bbC+\bbC\zeta$.
The space $A_{\sSig}$ has two gradings. 
One is the cohomological (form) degree valued in $\bbZ$: 
$\abs{dz} = \abs{d\ol{z}} = 1$ and $\abs{z}=\abs{\ol{z}}=\abs{\zeta}=0$,
extend additively.
The other is the parity (supe degree) valued in $\Zt=\{\ev,\od\}$: 
$p(z)=p(\ol{z})=\ev$ and $p(\zeta)=p(dz)=p(d\ol{z})=\od$, 
extend by the Koszul sign rule.

\subsection{Derived maps as cdga morphisms and superfields}

We consider a map $\Phi$ from $\sSig$ to the linear target $X=\bbC^n$ 
(regarded as an algebraic variety) in the derived context.
In other words, we consider a cdga morphism 
$\Phi^\sharp\colon (\shO(X),0) \to (A_{\sSig},d_{\sSig})$.
We denote the corresponding derived mapping stack by 
\[
 \clM \ceq \Map(\sSig, X).
\]
Since $\shO(X)=\bbC[x^1,\dotsc,x^n]$, 
choosing $\Phi^{\sharp} \in \clM$ is equivalent to choosing the superfields
\[
 \Phi^i \ceq \Phi^\sharp(x^i)\in A_{\sSig} \quad (i=1,\dotsc,n),
\]
subject to the compatibility 
$d_{\sSig}(\Phi^i)=d_{\sSig}(\Phi^\sharp(x^i))=\Phi^\sharp(0)=0$ for any $i$.
Using the physics notation, we can expand $\Phi^i$ as
\begin{align}\label{eq:1:Phii}
 \Phi^i(z,\ol{z},\zeta) = \gamma^i(z,\ol{z}) + \zeta c^i(z,\ol{z})
\end{align}
with $\gamma^i,c^i\in\Omega^{0,*}(\Sigma)$.
The parity of $\gamma^i$ is even, and $c^i$ is odd.
Let us bundle the components $\Phi^i$ into a single superfield, 
which, by abuse of notation, we also denote by
\begin{align}\label{eq:1:Phi}
 \Phi \in A_{\sSig} \otimes V, \quad 
 V \ceq \bbC^n \ (\text{regarded as a linear space}).
\end{align}


\subsection{Cotangent theory and conjugate superfields}\label{ss:1:BVth}

Following \cite{CG2}, we consider the cotangent theory to the above mapping space,
which is encoded by 
the shifted cotangent $\clF$ of the derived mapping stack $\clM$: 
\begin{align}\label{eq:1:clF}
 \clF \ceq T^*[-1]\clM = T^*[-1]\Map(\sSig,X).
\end{align}
A point of $\clF$ is a pair $(\Phi,\Pi)$ consisting of 
a map $\Phi$ as above and a degree-shifted cotangent vector $\Pi$ at $\Phi$.
For the linear target $X=\bbC^n$, we can model this as
\begin{align}\label{eq:1:B}
 \Pi \in A_{\sSig} \otimes V^\vee[1] \otimes \Ber(\sSig)
\end{align}
where $V^\vee$ is the linear dual of the linear space $V$ 
appearing in \eqref{eq:1:Phi},
and $\Ber(\sSig)$ is the Berezinian density on the supermanifold $\sSig$
(see \cref{ss:1:BVform} below).
Now, note that the components $\Phi^i$ ($i=1,\dotsc,n$) 
of $\Phi$ correspond to choosing a basis of $V$.
Then, the dual basis of $V^\vee$ gives components $\Pi_i$ of $\Pi$.
Using the physics notation, we can expand each component $\Pi_i$ as
\begin{align}\label{eq:1:Pii}
 \Pi_i(z,\ol{z},\zeta) = b_i(z,\ol{z}) + \zeta \beta_i(z,\ol{z})
\end{align}
with $b_i,\beta_i\in\Omega^{1,*}(\bbC)$.
The parity of $\beta_i$ is even, and $b_i$ is odd.

Another explanation of the components $\Pi_i$ is in order.
Consider the shifted cotangent $T^*[-1]X$ of the linear target $X=\bbC^n$. 
We have the standard coordinates $x^i$ of cohomological degree $0$ 
and the conjugate variables $p_i$ of cohomological degree $-1$.
Then, the superfield conjugate to $\Phi^i$ is given by 
$\Pi_i \ceq (\Phi^\sharp)^*(p_i)$, 
and we can bundle them as a superfield \eqref{eq:1:B}.

\subsection{BV symplectic form}\label{ss:1:BVform}

The cotangent theory $\clF=T^*[-1]\clM$ has a canonical BV pairing $\omega$. 
To explain that, let us consider the universal evaluation
$\evl\colon \sSig \times \clF \to T^*[-1]X$ 
covering $\sSig \times \clM \to X$.
Then, the canonical $(-1)$-shifted symplectic form $\omega_X$ on $T^*[-1]X$ 
induces the BV symplectic form $\omega$ on $\clF$ by 
\[
 \omega \ceq \int_{\sSig} \evl^*(\omega_X).
\]
Here, the integration along $\sSig$ means that 
we take the pushforward $\pi_*$ along the projection 
$\pi\colon \sSig \times \clF \to \clF$, determined by a choice of orientation.
A natural choice is the Berezinian density $\Ber(\sSig)$, 
and hereafter we use it.

Using the notation \eqref{eq:1:Phi} and \eqref{eq:1:B}, 
the BV symplectic form $\omega$ is expressed as 
\begin{align}\label{eq:1:omega}
 \omega = \int_{\sSig} \pair{ \delta B, \delta \Phi },
\end{align}
where $\pair{-,-}\colon V^\vee \otimes V \to \bbC$ is the evaluation pairing
and $\delta$ is the de Rham operator on superfield space, 
i.e., the variation of superfields. 

We can also express the form $\omega$ 
in terms of the components \eqref{eq:1:Phii} and \eqref{eq:1:Pii}.
The symplectic form on $T^*[-1]X$ can be written as 
$\omega_X=\delta p_i \, \delta x^i$,
where $x^i,p_i$ are as in \cref{ss:1:BVth},  
$\delta$ denotes the de Rham differential on $X$,
and Einstein's summation convention is used. 
In local coordinates, if we trivialize $\Ber(\sSig)$
by the Berezinian density $dz\,d\ol{z}\,d\zeta$, then
\[
 \omega = \int dz\,d\ol{z}\,d\zeta\;
 \delta \Pi_i(z,\ol{z},\zeta) \, \delta \Phi^i(z,\ol{z},\zeta),
\]
with the usual Koszul signs implied.
Using the expansions \eqref{eq:1:Phii}, \eqref{eq:1:Pii} 
and integrating out $\zeta$, we have 
\[
 \omega = \int dz\,d\ol{z}\;
 (\delta\beta_i\,\delta\gamma^i + \delta b_i\,\delta c^i).
\]

\subsection{BV action functional and BV differential}\label{ss:2.5}

Using the source differential $d_{\sSig}=\pd_{\ol{z}}$, 
consider the action functional
\begin{align}\label{eq:2:S}
 S \ceq \int_{\sSig} \pair{ \Pi, \pd_{\ol{z}} \Phi },
\end{align}
where $\int_{\sSig} $, $\pair{\cdot,\cdot}$ and $\delta$ 
are the same one as \cref{ss:1:BVform}.
The expansions \eqref{eq:1:Phii} and \eqref{eq:1:Pii} yield
\[
 S = \int dz\,d\ol{z}\;(\beta_i \pd_{\ol{z}}\gamma^i + b_i \pd_{\ol{z}}c^i),
\]
which is the standard action functional of the $bc$-$\beta\gamma$ system.

The BV differential $Q$ is the Hamiltonian vector field associated to $S$, 
\[
 Q \ceq \{S,-\}_\omega = \omega(\delta S,\delta-),
\]
where $\omega$ is the shifted BV symplectic form \eqref{eq:1:omega} 
on $\clF=T^*[-1]\clM$. 
Equivalently, $Q$ is the vector field of cohomological degree $1$ 
on the superfield space determined by 
$\iota_Q \omega = \delta S$.
By the standard functional calculation, we find that 
$Q$ acts on superfields $\Phi$ and $\Pi$ by
$Q\Phi = \pd_{\ol{z}}\Phi$ and $Q\Pi = \pd_{\ol{z}}\Pi$.
%
%
%
Hence, we have 
\begin{align}\label{eq:2:Q=pdoz}
 Q = \pd_{\ol{z}},
\end{align}
and taking the $Q$-cohomology is just considering the holomorphic part.
In the physics terminology, we are employing the holomorphic twist.

\subsection{Classical observables}\label{ss:2.6}

Now we turn to the discussion on the factorization algebra of 
classical observables in our BV cotangent theory \eqref{eq:1:clF} 
with BV form \eqref{ss:1:BVform} and action functional \eqref{eq:2:S}
using the formalism developed in \cref{s:1}.
Hereafter, using the notation in \cref{ss:1.1}, 
we take the source $\sSig$ to be
\[
 \sSig \ceq (\sC,D_Z), \quad D_Z = \pd_\zeta+\zeta\pd_z.
\]

Following the notation in \cite{CG1,CG2}, for an open $U \subset \sC$, 
we define the dg superspace (complex of linear superspaces) of superfields as 
\[
 \clE(U) \ceq 
 \bigl(\Omega^{0,*}(U_{\tred}) \otimes \Wedge(\zeta) \otimes V\bigr) \oplus
 \bigl(\Omega^{1,*}(U_{\tred}) \otimes \Wedge(\zeta) \otimes V^\vee\bigr),
 \quad 
 Q \ceq \pd_{\ol{z}}.
\]
The superfield $\Phi$ lives in the first summand, and $\Pi$ in the second.
$\clE(U)$ is an $\shO_{\an}(U)$-module, 
so that $\clE$ is a sheaf of $\shO_{\an}(\sC)$-modules 
on the complex supermanifold $\sC$.
Similarly, for test superfields, we consider 
\[
 \clE_c(U) \ceq 
 \bigl(\Omega_c^{0,*}(U_{\tred}) \otimes \Wedge(\zeta) \otimes V\bigr) \oplus
 \bigl(\Omega_c^{1,*}(U_{\tred}) \otimes \Wedge(\zeta) \otimes V^\vee\bigr),
 \quad 
 Q \ceq \pd_{\ol{z}},
\]
where the subscript $c$ means compact-supported sections.
$\clE_c$ is an $\shO_{\an}(\sC)$-module.

As local classical observables, 
let us consider the polynomial functionals on $\clE_c(U)$.
Then, the dg superspace of classical observables 
on an open $U \subset\sC$ is defined to be 
\begin{align}\label{eq:2:cObs}
 \cObs{}(U) \ceq \bigl( \Sym(\clE_c(U)^\vee[-1]), d_{\tcl}\bigr), \quad 
 d_{\tcl} \ceq \{S,-\}_\omega.
\end{align}
Here the symbol $\Sym$ denotes the completed\footnote{We suppress 
 the completion symbol $\wh{\ }$ for simplicity.} symmetric algebra. 
Since $\{S,-\}_\omega = Q = \pd_{\ol{z}}$ on $\clE_c(U)$, 
the differential $d_{\tcl}$ can be restated as: 
$d_{\tcl}(\ell)(e) \ceq (-1)^{\abs{\ell}+1}\ell(Qe)$ 
for a linear observable $\ell \in \clE_c(U)^\vee[-1]$, 
and extended by the graded Leibniz rule.

Similarly as the standard argument \cite{CG1,CG2} for the non-SUSY case, 
the correspondence $U \mto \cObs{}(U)$ produces 
the $N=1$ SUSY factorization algebra $\cObs{}$ on $\sC$ 
(in the sense of \cref{dfn:1:SFA})
valued in the symmetric monoidal dg super category $\Ch(\DVS)$
of cochain complexes in differentiable vector superspaces
(see \cref{rmk:1:DVS}).
Let us give an outline of the argument.
\begin{itemize}
\item
For an inclusion of opens $U \inj V$ in $\sC$, 
we have the extension-by-zero map $\clE_c(U) \inj \clE_c(V)$, 
and the restriction map on duals produces 
the extension map $\cObs{}(U) \to \cObs{}(V)$.

Then, for pairwise disjoint open subsets $U_1,\dotsc,U_k$ in $\sC$,  
define the factorization product for $U \ceq U_1 \sqcup \dotsb \sqcup U_k$ as
\begin{align}\label{eq:2:cfp}
 \mu^{\tcl}_U\colon 
 \cObs{}(U_1) \otimes \dotsb \otimes \cObs{}(U_k) \lto 
 \cObs{}(U)^{\otimes k} \lto \cObs{}(U),
\end{align}
where the first arrow is the tensor\footnote{The tensor product 
 in \eqref{eq:2:cfp} is the completed one, but we suppress 
 the completion symbol $\wh{\ }$ for simplicity.} of 
the extension maps $\cObs{}(U_i) \to \cObs{}(U)$
and the second is the multiplication on $\cObs{}(U)$ 
as the symmetric algebra \eqref{eq:2:cObs}.

This gives rise to a prefactorization algebra on $\sC$ valued in $\Ch(\DVS)$.

\item
Imposing Weiss descent, one gets a factorization algebra $\cObs{}$ on $\sC$.
By the construction \eqref{eq:2:cfp}, 
the factorization product on $\cObs{}$ is commutative 
in the sense of \cref{dfn:1:comSPFA}.
\end{itemize}
We call $\cObs{}$ the \emph{factorization algebra of classical observables}.

Furthermore, the BV symplectic form $\omega$ induces 
a $1$-shifted Poisson bracket
\[
 \{-,-\}\colon \cObs{}(U) \otimes \cObs{}(U) \lto \cObs(U)[1],
\]
and the factorization algebra $\cObs{}$ is an $N=1$ SUSY Poisson 
factorization algebra in the sense of \cref{dfn:1:SPPFA}.

Let us describe the Poisson structure explicitly.
For test fields 
$f\in\Omega_c^{1,*}(U) \otimes \Lambda(\zeta)\otimes V^\vee$ and 
$g\in\Omega_c^{0,*}(U) \otimes \Lambda(\zeta)\otimes V$, 
the linear functionals $\Phi,\Pi$ are given as
\[
 \Phi(f) = \int_U dz\,d\ol{z}\,d\zeta\;\pair{f,\Phi}, \quad 
 \Psi(f) = \int_U dz\,d\ol{z}\,d\zeta\;\pair{\Pi,g},
\]
and these are generators of $\cObs{}(U)=\Sym(\clE_c(U)^\vee[-1])$.
Then, the BV bracket is 
\begin{align}\label{eq:1:P0}
 \{\Phi(f),\Pi(g)\} = \int_U dz\,d\ol{z}\,d\zeta\;\pair{f,g}, \quad 
 \{\Phi(f),\Phi(f')\} =  0, \quad 
 \{\Pi(g),\Pi(g')\} =  0.
\end{align}

\subsection{Holomorphic super-translation invariance}\label{ss:2.7}

We check the holomorphic super-translation invariance and 
the $S^1$-action equivariance of our factorization algebra $\cObs{}$.

For each open $U \subset \sC$, define the odd derivative $D$ 
on the coefficient algebra $C^\infty(U) \otimes \Lambda(\zeta)$ by
\[
 D \ceq \pd_\zeta + \zeta \pd_z,
\]
so that we have $D(\zeta)=1$ and $D^2=\pd_z$.
It extends to $\Omega^{0,*}(U)\otimes\Lambda(\zeta)$ and 
$\Omega^{1,*}(U)\otimes\Lambda(\zeta)$ by acting on coefficients 
(i.e., commuting with $d\ol{z}$ and $dz$).
Since $Q=\pd_{\ol{z}}$ by \eqref{eq:2:Q=pdoz} and $\pd_z=D^2$, we have 
\[
 [Q, D] = 0, \quad [Q,\pd_z]=0.
\]
Hence $D$ is a cochain map of the complex $(\clE(U),Q)$,
and induces a derivation of the complex $\cObs{}(U)$ 
commuting with the differential $d_{\tcl}$.
It is compatible with the factorization product \eqref{eq:2:cfp} of $\cObs{}$.
Hence, the factorization algebra $\cObs{}$ is invariant 
under the odd derivation $D$. 
In physics terminology, 
$D$ is a supersymmetry operator on classical observables.
In particular, $\cObs{}$ is invariant under $D^2=\pd_z$, 
i.e., $\cObs{}$ is holomorphically translation invariant 
in the sense of \cite{CG1,CG2}.

Next, consider the $S^1$ action on $\sSig=\sC$ by rotation 
\[
 R_\varphi\colon (z,\zeta) \lmto (e^{i\varphi}z,e^{i\varphi/2}\zeta) \quad
 (\varphi\in\bbR/2\pi\bbZ).
\]
It sends open subsets $U\subset\bbC$ to $e^{i\varphi}U$, 
and acts on $(0,*)$- and $(1,*)$-forms by pullback.
In particular, it acts on the superfield $\Phi(z,\ol{z},\zeta)$ by 
$(R_\varphi \Phi)(z,\ol{z},\zeta)
=\Phi(e^{-\varphi}z,e^{i\varphi}\ol{z},e^{-i\varphi/2}\zeta)$,
and on the superfield $\Pi=\Pi_zdz$ by 
$(R_\varphi \Pi)(z,\ol{z},\zeta)=(R_\varphi \Pi_z)(e^{i\varphi}dz)$.
Since both pullback on forms and the $\zeta$-scaling are holomorphic in $z$, 
we have $[\pd_{\ol{z}},R_\varphi]=0$.
Then, the induced map 
$(R_\varphi)^*\colon \cObs{}(R_\varphi(U)) \to \cObs{}(U)$
commutes with the differential $d_{\tcl}$.
Also, rotations preserve disjointness and inclusions, 
so the factorization product \eqref{eq:2:cfp} is compatible with $R_\varphi$.
Therefore, $\cObs{}$ is $S^1$-equivariant.


\subsection{SUSY Poisson vertex algebra of classical observables}
\label{ss:2.8}

By the argument in the previous subsections, 
we see that the factorization algebra $\cObs{}$ of classical observables 
satisfies the assumptions in \cref{thm:1:SPVA}. 
Hence, we have:

\begin{thm}\label{thm:2:V(cObs)}
There is a structure of an $N_K=1$ SUSY Poisson vertex algebra on the cohomology
\[
 \bbV(\cObs{}) = H^{\bl}(\cObs{}(\sD_1)).
\]
\end{thm}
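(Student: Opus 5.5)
The plan is to apply \cref{thm:1:SPVA} to $\clF=\cObs{}$. Most of its hypotheses were already set up in \cref{ss:2.6,ss:2.7}, so the work is to check each condition \ref{i:1:u}--\ref{i:1:hol} against the explicit model $\cObs{}(U)=(\Sym(\clE_c(U)^\vee[-1]),d_{\tcl})$.

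First, the structural conditions.
\begin{itemize}
\item Commutativity and the Poisson structure come directly from \eqref{eq:2:cfp} and \eqref{eq:1:P0}. The bracket is induced by $\omega$, and extension maps are restrictions of duals of the extension-by-zero maps $\clE_c(U)\inj\clE_c(V)$. These preserve the pairing $\int dz\,d\ol{z}\,d\zeta\,\pair{f,g}$, so they are Poisson.
\item Unitality \ref{i:1:u} holds because $\cObs{}(\emptyset)=\Sym(0)=\bbC$.
\item For \ref{i:1:SETI}, set $T=\pd_z$, $S=D$ and $\rho(E)$ equal to the infinitesimal rotation, all as in \cref{ss:2.7}. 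The relations \eqref{eq:1:DE} hold because the action is by the vector fields themselves.
\item For the anti-holomorphic generators, take $h_{\ol{z}}$ to be contraction $\iota_{\pd_{\ol{z}}}$ with the $d\ol{z}$ factor, extended as a derivation. Then $[Q,\iota_{\pd_{\ol{z}}}]=L_{\pd_{\ol{z}}}$ by Cartan's formula on $\Omega^{0,*}$ and $\Omega^{1,*}$. Define $\eta_{\ol{D}}$ and $\eta_{\ol{E}}$ analogously, contracting with $\ol{D}$ and $\ol{E}$ respectively; only their $\ol{z}$-components contribute, because the field space contains no $\ol{\zeta}$.
\end{itemize}

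Next, the analytic conditions, which are reduced to a cohomology computation. Since $Q=\pd_{\ol{z}}$, the $\pd_{\ol{z}}$-Poincaré lemma together with Serre duality for compactly supported Dolbeault cohomology on a disk gives
\[
 H^\bl(\cObs{}(\sD_r))\cong \Sym\bigl(H^\bl(\clE_c(\sD_r))^\vee[-1]\bigr).
\]
The linear dual is the space of holomorphic jets at the origin: the Taylor coefficients of $\gamma,c$ and of $\beta\,dz, b\,dz$, evaluated by residue pairings. This identifies $\bbV$ with a polynomial algebra on the generators $\pd_z^k\gamma^i(0)$, $\pd_z^k c^i(0)$, $\pd_z^k\beta_i(0)$, $\pd_z^k b_i(0)$. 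The identification is independent of $r$, which gives disk independence \ref{i:1:di}.

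Weight boundedness \ref{i:1:rho(E)} then follows by computing the $\rho(E)$-weight of each generator. The superfield $\Phi$ has weight $0$, so its $\zeta$-component $c$ has weight $\thf$. The field $\Pi$ carries $dz$ and the Berezinian factor, giving weights $\thf$ and $1$ for its components. Each $\pd_z$ adds $1$. Every weight space is therefore finite dimensional and bounded below.

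Holomorphic dependence \ref{i:1:hol} is where I expect the real work to lie, as in \cite[\S5.3]{CG1}. The map $m_{r,R,n}$ is built from the super-translates $\tau^{\SUSY}_{Z_i}$ acting on test forms, followed by multiplication in $\Sym$. Super-translation acts polynomially in $\zeta_i$ and smoothly in $z_i$. The required homotopy is $h^{(i)}=\iota_{\pd_{\ol{z}}}$ applied to the $i$-th factor, since $\pd_{\ol{z}_i}$ of the translated observable equals $L_{\pd_{\ol{z}}}$ acting on that factor. Its Cartan homotopy is exactly what the $\eta_{\ol{z}}$ data already provide.

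One subtlety remains: the continuity of these maps in the $\DVS$ sense for the completed tensor products. For this I will appeal to the corresponding non-SUSY argument of \cite{CG1}, after tensoring with the finite-dimensional factor $\Wedge(\zeta)$.

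With all conditions verified, \cref{thm:1:SPVA} yields the $N_K=1$ SUSY Poisson vertex algebra structure on $\bbV(\cObs{})$.
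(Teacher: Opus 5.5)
Your strategy is the same as the paper's. Its entire proof is the remark that, by \S\S\ref{ss:2.6}--\ref{ss:2.7}, $\cObs{}$ satisfies the hypotheses of \cref{thm:1:SPVA}. The gap is where you go further and check \ref{i:1:di} and \ref{i:1:rho(E)}. Your claim that the linear dual is the space of holomorphic jets at the origin is false. So is the consequence that $\bbV$ is a polynomial algebra on Taylor coefficients at $0$, independent of $r$.

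The Serre duality you invoke identifies the linear cohomology on $\sD_r$ with holomorphic data on the \emph{whole} disk $\bbD_r$, tensored with $\Wedge(\zeta)$ and $V$, $V^\vee$. If one dualizes $\clE_c(U)$, as the paper does, this data is holomorphic functions and $1$-forms on $\bbD_r$. If one dualizes $\clE(U)$, as in \cite{CG1}, it is their continuous duals. Either way it depends on $r$.

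Consider the Costello--Gwilliam convention. On the $\Phi$-component, the map induced by $\sD_r\inj\sD_s$ is the transpose of the restriction $\shO(\bbD_s)\to\shO(\bbD_r)$. This transpose is injective with dense image. However, it misses the functional $f\mapsto f(w)$ for $r\le\abs{w}<s$, so \ref{i:1:di} fails for the full complexes. Likewise, for $0<\abs{w}<r$ the functional $f\mapsto f(w)$ is an infinite sum of $\rho(E)$-eigenvectors, and its $\rho(E)$-orbit spans an infinite-dimensional space. Hence the full cohomology is neither a direct sum of weight spaces nor locally finite, and \ref{i:1:rho(E)} fails too.

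Your jet description is correct only on the $\rho(E)$-finite part. There, each eigenspace of the linear cohomology is spanned by finitely many Taylor-coefficient functionals at $0$ and is independent of $r$. This is exactly why \cite[Theorem 5.3.3]{CG1} imposes disk independence eigenspace by eigenspace and defines the vertex algebra as the direct sum of $S^1$-eigenspaces. To close the gap you must do the same: verify \ref{i:1:di} and \ref{i:1:rho(E)} on eigenspaces, take $\bbV$ to be the $S^1$-finite part of $H^{\bl}(\cObs{}(\sD_1))$, and extract the modes of $Y(a;Z)b$ eigenspace by eigenspace. As written, with $\bbV=H^{\bl}(\cObs{}(\sD_1))$, the hypotheses of \cref{thm:1:SPVA} are not met. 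The paper does not fill this in: it never checks \ref{i:1:di}--\ref{i:1:hol} for $\cObs{}$, and its jet argument for $\qObs{}$ in \cref{prp:2:qObsFA} has the same defect.

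Two smaller points. First, even on the $S^1$-finite part the weight spaces are not finite-dimensional. Since $\gamma^i$ has weight $0$, the weight-$0$ space contains $\bbC[\gamma^1(0),\dotsc,\gamma^n(0)]$, so you can claim only lower-boundedness and local finiteness. Second, your $\eta_{\ol D}$ is not well defined. The $\ol z$-component of $\ol D=\pd_{\ol\zeta}+\ol\zeta\pd_{\ol z}$ is $\ol\zeta\pd_{\ol z}$, which vanishes when there is no $\ol\zeta$. That forces $\rho(\ol D)=0$, which contradicts $[\ol D,\ol D]=2\pd_{\ol z}$, because $\rho(\pd_{\ol z})=L_{\pd_{\ol z}}\neq 0$ is exactly what you use for \ref{i:1:hol}. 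The extraction argument uses only $T$, $S$, $\rho(E)$ and $h_{\ol z}$. You should verify that part of \ref{i:1:SETI} and say that this is all you need, rather than claim an action of all of $\frt^{\SUSY}_{\hol,S^1}$.
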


The SUSY Poisson structure of $\bbV(\cObs{})$ is encoded by \eqref{eq:1:P0}.
Let us rewrite it in terms of SUSY variables.
For $Z_1=(z_1,\zeta_1)$ and $Z_2=(z_2,\zeta_2)$, 
the super-delta distribution is
\begin{align}\label{eq:2:Sdelta}
 \delta(Z_1-Z_2) \ceq (\zeta_1-\zeta_2) \delta(z_1-z_2).
\end{align}
Then, in terms of the components $\Phi^i$ \eqref{eq:1:Phii} 
and $\Pi_i$ \eqref{eq:1:Pii}, we can rewrite \eqref{eq:1:P0} as 
\[
 \{\Phi^i(Z_1),\Pi_j (Z_2)\} = \delta^i_j \delta(Z_1-Z_2), \quad
 \{\Phi^i(Z_1),\Phi^j(Z_2)\} =  0, \quad 
 \{\Pi_i (Z_1),\Pi_j (Z_2)\} =  0.
\]
Using the $N_K=1$ super-diagonal 
\[
 z_{12} \ceq z_1-z_2-\zeta_1\zeta_2, 
\]
this is also equivalent to the SUSY OPE:
\[
 \Phi^i(Z_1)\Pi_j(Z_2) \sim \frac{\delta^i_j}{z_{12}} = 
 \frac{\delta^i_j}{z_1-z_2}\left(1+\frac{\zeta_1\zeta_2}{z_1-z_2}\right),
\]
with other OPEs trivial.
We can further rewrite it in terms of $N_K=1$ SUSY Lambda bracket:
\[
 \{\Phi^i_\Lambda \Pi_j\} = \delta^i_j.
\]
It is nothing but the SUSY Poisson structure in the $bc$-$\beta\gamma$ system 
(for the quantized structure, see \cite[Example 5.4]{HK}).

\subsection{BV quantization and propagator}
\label{ss:2.9}

We now apply the BV quantization formalism to 
the classical theory constructed in the previous subsections. 
Recall that, for an open subset $U \subset \sC$, the space of fields is
\[
 \clE(U) = \Omega^{0,*}(U_{\tred}) \otimes \Lambda(\zeta)\otimes V \;\oplus\;
           \Omega^{1,*}(U_{\tred}) \otimes \Lambda(\zeta)\otimes V^\vee,
\]
equipped with the BV differential
\[
 Q = \bpd \ceq \pd_{\ol{z}},
\]
and the BV pairing induced by the evaluation pairing 
$V^\vee\otimes V\to \bbC$ together with integration over $\sSig=\sC$. 
The action functional is
\[
 S(\Phi,\Pi) = \int_{\sC} \langle \Pi,\bpd \Phi\rangle.
\]
Since $S$ is quadratic in the fields, 
the theory is BV free in the sense of \cite[Definition 7.2.1]{CG2}. 
Therefore, its BV quantization is governed by 
a choice of gauge-fixing operator and the associated heat-kernel homotopy.

Following \cite[\S7.2]{CG2}, we equip $(\sC)_{\tred}=\bbC$ 
with the standard flat Hermitian metric and choose the gauge-fixing operator
\begin{align}\label{eq:2:QGF}
 Q^{\GF} \ceq \bpd^{\,*},
\end{align}
the formal adjoint of $\bpd$ on the reduced manifold $\bbC$ with metric. 
We extend $Q^{\GF}$ trivially to the odd variable $\zeta$ 
and coefficient spaces $V$, $V^\vee$. 
Then the generalized Laplacian, i.e., the commutator
\begin{align}\label{eq:2:H}
 H \ceq [Q,Q^{\GF}] = [\bpd,\bpd^{\,*}]
\end{align}
is the Dolbeault Laplacian acting coefficient-wise on $\clE(U)$.

For $t>0$, let $K_t$ denote the heat operator $e^{-tH}$, 
and write also $K_t(Z_1,Z_2)$ for its Schwartz kernel. 
In the language of BV quantization of free theories \cite[\S8.2]{CG2}, 
the regularized propagator is the operator
\begin{align}\label{eq:2:PeL}
 P_{\ep<L} \ceq \int_\ep^L Q^{\GF} e^{-tH}\,dt \quad (0<\ep<L<\infty).
\end{align}

\begin{lem} 
For each $0<\ep<L<\infty$, the operator $P_{\ep<L}$ is 
a well-defined smoothing operator of cohomological degree $-1$. 
Moreover, it satisfies the homotopy identity
\begin{align}\label{eq:2:hot}
 [Q,P_{\ep<L}] = e^{-\ep H}-e^{-LH}.
\end{align}
In particular, $P_{\ep<L}$ is a heat-kernel regularized homotopy inverse 
for the differential $Q=\bpd$.
\end{lem}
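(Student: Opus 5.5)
The plan is to reduce everything to standard heat-kernel facts on the reduced manifold $\bbC$. Since $Q=\bpd$, $Q^{\GF}=\bpd^{\,*}$ and $H$ act coefficient-wise on the factor $\Lambda(\zeta)\otimes V$ (resp.\ $\Lambda(\zeta)\otimes V^\vee$), the odd variable plays no role. Concretely, $\clE(U)$ is a finite direct sum of copies of the ordinary Dolbeault complexes $\Omega^{0,*}$ and $\Omega^{1,*}$ on $U_{\tred}$. Each summand is tensored with the finite-dimensional superspace $\Lambda(\zeta)\otimes V$ or $\Lambda(\zeta)\otimes V^\vee$, and all operators act as (operator)$\otimes\id$. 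It therefore suffices to prove the claims for the scalar Dolbeault complex of $\bbC$ with the flat metric. The only point where the super-structure enters is the Koszul sign: we must check that $\bpd$ and $\bpd^{\,*}$ are even with respect to parity, so that no extra signs appear when they pass $\zeta$. This holds because they act only on $\Omega^{0,*}$-coefficients.

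First, well-definedness and smoothing. For the flat metric, $H=[\bpd,\bpd^{\,*}]=\bpd\bpd^{\,*}+\bpd^{\,*}\bpd$ is $\tfrac14\Delta$ times the identity on the components in the $d\ol z$ basis, with $\Delta$ the Euclidean Laplacian. Its heat kernel is therefore the explicit Gaussian
\[
 K_t(z_1,z_2)=\tfrac{1}{\pi t}\,e^{-\abs{z_1-z_2}^2/t}
\]
(times identity), smooth for $t>0$. The operator $Q^{\GF}e^{-tH}$ has kernel $\bpd^{\,*}_{z_1}K_t$, which is smooth and depends smoothly on $t\in[\ep,L]$ with Gaussian decay. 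Integrating over the compact interval $[\ep,L]$ gives a smooth kernel, so $P_{\ep<L}$ is smoothing. The degree is $-1$ because $\bpd^{\,*}$ lowers form degree by one and $e^{-tH}$ preserves degree. On compactly supported or arbitrary smooth inputs, the Gaussian decay makes the integrals converge.

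Second, the homotopy identity. We have $[Q,Q^{\GF}]=H$ by definition, and $[Q,H]=[\bpd,\bpd\bpd^{\,*}+\bpd^{\,*}\bpd]=0$ since $\bpd^2=0$. Hence $Q$ commutes with $e^{-tH}$, and
\[
 [Q,Q^{\GF}e^{-tH}]=He^{-tH}=-\tfrac{d}{dt}e^{-tH}.
\]
Integrating from $\ep$ to $L$ then gives \eqref{eq:2:hot}. Commuting $Q$ with the $t$-integral is justified by smoothness of the kernels.

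The main obstacle is analytic rigor rather than algebra. We must justify that $e^{-tH}$ is well defined on noncompact $\bbC$ and commutes with $\bpd$ on the spaces in question, including the DVS-smoothness needed later. I would sidestep the functional calculus entirely by using the explicit Gaussian kernel. I would then verify $\bpd K_t=K_t\bpd$ directly: both $\bpd$ and $\bpd^{\,*}$ have constant coefficients and $K_t$ is translation invariant, so the verification is integration by parts. With $\pd_t K_t=-HK_t$ also checked directly, the rest is formal.
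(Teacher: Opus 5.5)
Your proposal is correct and follows the same route as the paper. Both reduce to the Dolbeault Laplacian acting coefficient-wise, use that $Q$ commutes with $H=[Q,Q^{\GF}]$ and hence with $e^{-tH}$, so that $[Q,Q^{\GF}e^{-tH}]=He^{-tH}=-\tfrac{d}{dt}e^{-tH}$, and integrate over $[\ep,L]$; your explicit flat Gaussian kernel is a more concrete justification of the smoothing and commutation facts the paper simply asserts. One remark: your parity check is not needed, since even if $\bpd$ and $\bpd^{\,*}$ are parity-odd (as the paper's convention $p(d\ol{z})=\od$ suggests), they pick up the same Koszul twist on $\Lambda(\zeta)\otimes V$, which squares to the identity and so cancels in both $H$ and $[Q,P_{\ep<L}]$.
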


\begin{proof}
This lemma is more or less a standard one, 
but we give a proof for completeness.
By construction, both $Q=\bpd$ and $Q^{\GF}=\bpd^{\,*}$ 
act only on the Dolbeault part of the space of fields, and do not affect
either the odd variable $\zeta$ or the coefficients in $V,V^\vee$. 
Hence
\[
 H = [Q,Q^{\GF}] = \bpd\,\bpd^{\,*} + \bpd^{\,*}\bpd
\]
is the Dolbeault Laplacian acting coefficient-wise on $\clE(U)$.

For each $t>0$, the heat operator $e^{-tH}$ is smoothing, 
and therefore $Q^{\GF}e^{-tH}$ is again smoothing. 
Hence the integral
$P_{\ep<L} = \int_\ep^L Q^{\GF}e^{-tH}\,dt$ in \eqref{eq:2:PeL} 
converges in the topology of smoothing operators, and defines a continuous map
\[
 P_{\ep<L}\colon \clE_c(U) \lto \clE(U)[-1].
\]
Its cohomological degree is $-1$, 
since $Q^{\GF}$ has degree $-1$ and $e^{-tH}$ has degree $0$.

It remains to prove \eqref{eq:2:hot}. 
Since $Q$ commutes with $H=[Q,Q^{\GF}]$, 
it also commutes with the heat operator $e^{-tH}$. 
Therefore
$[Q,Q^{\GF}e^{-tH}] = [Q,Q^{\GF}]\,e^{-tH} = He^{-tH}$.
Integrating it from $\ep$ to $L$, we obtain
\[
 [Q,P_{\ep<L}] = \int_\ep^L He^{-tH}\,dt =
 -\int_\ep^L \frac{d}{dt}e^{-tH}\,dt =
 e^{-\ep H}-e^{-LH},
\]
where we used $He^{-tH}=-\frac{d}{dt}e^{-tH}$ in the second equality.
\end{proof}

The operator $P_{\ep<L}$ admits a Schwartz kernel, again denoted by
$P_{\ep<L}(Z_1,Z_2)$ for $Z_i=(z_j,\zeta_j)$, $j=1,2$.
By definition, this kernel is expressed as 
\begin{align*}
 P_{\ep<L}(Z_1,Z_2) = \int_\ep^L (Q^{\GF}_{Z_1}\otimes 1)\,K_t(Z_1,Z_2)\,dt.
\end{align*}
Then, the homotopy identity \eqref{eq:2:hot} is equivalently expressed 
at the level of kernels as
\begin{align}\label{eq:2:ker-hot}
 Q_{Z_1}P_{\ep<L}(Z_1,Z_2) = K_\ep(Z_1,Z_2)-K_L(Z_1,Z_2).
\end{align}
Thus $P_{\ep<L}$ is the homotopy inverse of $Q=\bpd$ regularized by the heat-kernel.

We next identify the singular part of the propagator on the flat superplane. 
Recall from \eqref{eq:1:Z12} the super-translation invariant difference
\[
 Z_{12} \ceq (z_{12},\zeta_{12})=(z_1-z_2-\zeta_1\zeta_2,\ \zeta_1-\zeta_2),
\]
and from \eqref{eq:2:Sdelta} the super-delta distribution
\[
 \delta(Z_1-Z_2) \ceq (\zeta_1-\zeta_2)\delta(z_1-z_2).
\]

\begin{prp}\label[prp]{prp:2:sCauchy}
Modulo terms smooth along the super-diagonal, 
the singular part of the propagator kernel on $\bbC^{1|1}$ is 
represented by the super-Cauchy kernel
\[
 P(Z_1,Z_2)_{\sing} \ceq 
  \frac{1}{2\pi i}\, \frac{d\ol{z}_1}{z_{12}}
  \otimes \id_{V\otimes V^\vee}.
\]
Equivalently, in the sense of distributions,
\[
 \bpd_{Z_1}P(Z_1,Z_2)_{\sing} =
 \delta(Z_1-Z_2) \otimes \id_{V\otimes V^\vee}.
\]
\end{prp}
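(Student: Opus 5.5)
The plan is to reduce the statement to the classical Cauchy kernel on the reduced plane and then supersymmetrize using the super-delta \eqref{eq:2:Sdelta}. Since $Q=\bpd$ and $Q^{\GF}=\bpd^{\,*}$ act only on the Dolbeault part and trivially on $\zeta$, $V$, $V^\vee$ (as in the proof of the preceding lemma), the heat kernel factorizes. Its Schwartz kernel, taken with respect to the Berezinian pairing $\int dz\,d\ol{z}\,d\zeta$, is the scalar heat kernel $k_t(z_1,z_2)=\frac{1}{4\pi t}e^{-|z_1-z_2|^2/4t}$ tensored with the Berezin kernel of the identity on $\Lambda(\zeta)$. That Berezin kernel is $(\zeta_1-\zeta_2)$, since $\int d\zeta_2\,(\zeta_1-\zeta_2)f(\zeta_2)=f(\zeta_1)$ up to the sign convention fixed by $\Ber(\sSig)$. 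Thus $K_t(Z_1,Z_2)=(\zeta_1-\zeta_2)k_t(z_1,z_2)\otimes\id_{V\otimes V^\vee}$ with the appropriate form factors, and $P_{\ep<L}(Z_1,Z_2)=(\zeta_1-\zeta_2)\,p_{\ep<L}(z_1,z_2)$, where $p_{\ep<L}$ is the non-SUSY propagator of \cite[\S7.2]{CG2}.

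Next, I would quote or recompute the standard fact that the limit $\ep\to0$, $L\to\infty$ gives $p(z_1,z_2)=\frac{1}{2\pi i}\frac{d\ol{z}_1}{z_1-z_2}$ up to smooth terms. Explicitly, $\bpd^{\,*}$ applied to $k_t$ gives $\frac{\ol{z}_1-\ol{z}_2}{t^2}e^{-|z_{1}-z_2|^2/4t}$ times constants, and integrating over $t\in(0,\infty)$ yields a constant multiple of $1/(z_1-z_2)$. The constant is fixed by $\bpd\frac{1}{\pi z}=\delta$. The $L<\infty$ cutoff contributes only a smooth term.

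The remaining step is to identify $(\zeta_1-\zeta_2)/(z_1-z_2)$ with $1/z_{12}$ modulo terms that are smooth, or that vanish against the super-delta. This step is the main obstacle, because the two expressions are not literally equal. Expanding gives
\[
\frac{1}{z_{12}}=\frac{1}{z_1-z_2}+\frac{\zeta_1\zeta_2}{(z_1-z_2)^2}.
\]
So the issue is how the $\zeta$-dependence is distributed between the superfields and the kernel. I would handle it by treating the kernel as pairing $\Pi$ against $\Phi$ via $\int d\zeta$. The correct statement to check is then the distributional identity
\[
\bpd_{z_1}\frac{1}{2\pi i\,z_{12}}=\delta(Z_1-Z_2),
\]
understood after Berezin integration against test superfields in $\zeta_2$, as in the second formulation of the proposition.

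To verify this identity, I would apply $\bpd_{z_1}$ termwise. The first term gives $\delta(z_1-z_2)$. The second term gives $-\zeta_1\zeta_2\,\pd_{z_1}\delta(z_1-z_2)$, using $\bpd(1/z^2)=-\pi\pd\delta$. Equivalently, I would use the super-translation $\tau^{\SUSY}_{Z_2}$ to move $Z_2$ to $0$, where $z_{12}=z_1$ and the claim reduces to $\bpd\frac{1}{2\pi i z}=\delta(z)$. The SUSY translation invariance of the kernel, inherited from $[Q,D]=0$ in \cref{ss:2.7}, then transports the identity back to general $Z_2$. Under this transport, $\zeta_1$ is the translated odd coordinate, and the translated $\delta(z_1)$ picks up the $\zeta_1\zeta_2\pd\delta$ correction. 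Combining this with $(\zeta_1-\zeta_2)$ reproduces $\delta(Z_1-Z_2)$, since $(\zeta_1-\zeta_2)\zeta_1\zeta_2=0$ kills the correction. I expect the sign and orientation bookkeeping of $d\zeta$ in this last step to be the fiddliest part. I would settle it by checking the final OPE against $\Phi^i(Z_1)\Pi_j(Z_2)\sim\delta^i_j/z_{12}$ from \cref{ss:2.8}.
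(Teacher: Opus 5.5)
\textbf{There is a genuine gap, and it cannot be closed.} Your first two paragraphs are correct, and more careful than the paper's argument. Since $Q=\bpd$ and $Q^{\GF}=\bpd^{\,*}$ act trivially on $\zeta$, the kernel is $(\zeta_1-\zeta_2)\,p(z_1,z_2)$. Its singular part is therefore $\frac{1}{2\pi i}\frac{(\zeta_1-\zeta_2)\,d\ol{z}_1}{z_1-z_2}=\frac{1}{2\pi i}\frac{\zeta_{12}\,d\ol{z}_1}{z_{12}}$, using $\zeta_{12}\zeta_1\zeta_2=0$.

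The failure is the identification of $(\zeta_1-\zeta_2)/(z_1-z_2)$ with $1/z_{12}$. Two things go wrong:
\begin{itemize}
\item The difference of the two kernels is not smooth: the $\zeta$-free part of $1/z_{12}$ is $1/(z_1-z_2)$, and $\zeta_{12}/z_{12}$ has no $\zeta$-free part.
\item More basically, $\bpd_{z_1}$ does not touch $\zeta_1,\zeta_2$, so it preserves parity in the odd variables. Since $1/z_{12}$ is even,
\[
 \bpd_{z_1}\frac{1}{2\pi i\,z_{12}}
 = \delta(z_1-z_2)-\zeta_1\zeta_2\,\pd_{z_1}\delta(z_1-z_2)
 = D_{Z_1}\,\delta(Z_1-Z_2),
\]
which is an even distribution. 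But $\delta(Z_1-Z_2)=(\zeta_1-\zeta_2)\delta(z_1-z_2)$ is odd.
\end{itemize}
In your last paragraph the super-delta appears only because you multiply by $(\zeta_1-\zeta_2)$, and that factor is not in the kernel $1/z_{12}$. The translation argument has the same flaw. At $Z_2=0$ the right-hand side is $\zeta_1\delta(z_1)$, not $\delta(z_1)$, so reducing to $\bpd\frac{1}{2\pi i z}=\delta(z)$ silently drops $\zeta_1$. What you have actually verified is $\bpd_{z_1}\frac{1}{2\pi i}\frac{\zeta_{12}}{z_{12}}=\delta(Z_1-Z_2)$. Indeed $\frac{1}{z_{12}}=D_{Z_1}\frac{\zeta_{12}}{z_{12}}$, so the kernel in the statement is $D_{Z_1}$ applied to the true propagator.

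Consequently the two formulations in the proposition are incompatible, and the first is not what the heat-kernel propagator gives. The paper's own proof has the same defect. It uses super-translation invariance to say the kernel depends only on $Z_{12}$, then substitutes $z_{12}$ for $z_1-z_2$ in the Cauchy kernel. This overlooks that the odd invariant $\zeta_{12}$ must appear, because the Berezin reproducing kernel of $\id_{\Lambda(\zeta)}$ is odd. Your explicit factorization is the better route. Let it correct the statement to $P(Z_1,Z_2)_{\sing}=\frac{1}{2\pi i}\frac{\zeta_{12}\,d\ol{z}_1}{z_{12}}\otimes\id_{V\otimes V^\vee}$, which does satisfy the distributional formulation.

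Do not calibrate signs against $\Phi^i(Z_1)\Pi_j(Z_2)\sim\delta^i_j/z_{12}$ from \cref{ss:2.8}. That OPE inherits the same error and contradicts the paper's own $\{\Phi^i{}_\Lambda\Pi_j\}=\delta^i_j$. Under the super-residue definition \eqref{eq:1:P}, a singular part $1/z$ gives a bracket proportional to $\chi$, whereas $\zeta/z$ gives $1$.
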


\begin{proof}
By holomorphic super-translation invariance, 
the singular part of the propagator may depend only on 
the super-translation invariant difference 
$Z_{12}=(z_1-z_2-\zeta_1\zeta_2,\ \zeta_1-\zeta_2)$. 
On the reduced complex line $(\sC)_{\tred}=\bbC$, 
the standard fundamental solution of $\bpd$ is the Cauchy kernel
$\bpd_{z_1}\left(\frac{1}{2\pi i}\frac{1}{z_1-z_2}\right)=\delta(z_1-z_2)$.
In the present SUSY setting, 
the reduced difference $z_1-z_2$ is replaced by the invariant combination
$z_{12}=z_1-z_2-\zeta_1\zeta_2$.
Thus the unique singular kernel 
compatible with holomorphic super-translations is
$\frac{1}{2\pi i} \frac{d\ol{z}_1}{z_{12}}$, 
and its $\bpd$-derivative is the super-delta distribution, namely
\[
 \bpd_{Z_1}\left(\frac{1}{2\pi i}\frac{d\ol{z}_1}{z_{12}}\right) =
 \delta(Z_1-Z_2).
\]
Finally, since the coefficient spaces contribute 
only through the evaluation pairing $V^\vee\otimes V\to \bbC$, 
the full kernel is obtained by tensoring with $\id_{V \otimes V^\vee}$. 
This proves the proposition.
\end{proof}

The proposition shows that the singular part of the propagator is 
precisely the kernel expected from the free $bc$-$\beta\gamma$ system. 
In the next \cref{ss:2.10},
we will formalize this in terms of 
the factorization algebra of quantum observables.

\subsection{Quantum observables and the associated SUSY vertex algebra}
\label{ss:2.10}

We now define the factorization algebra of quantum observables associated to 
the free BV theory of \cref{ss:2.9}, and then apply \cref{thm:1:533}
to obtain the corresponding $N_K=1$ SUSY vertex algebra.

Fix a scale $L>0$. 
Let $K_L$ be the heat kernel for the operator $e^{-LH}$, where
$H = [Q,Q^{\GF} ] =[\bpd,\bpd^{\,*}]$
is the Dolbeault Laplacian from \eqref{eq:2:H}. 
The smooth kernel $K_L$ determines the scale-$L$ BV Laplacian \cite[\S8.2]{CG2}: 
\[
 \Delta_L\colon \Sym\bigl(\clE_c(U)^\vee[1]\bigr) \lto
 \Sym\bigl(\clE_c(U)^\vee[1]\bigr)[1],
\]
which is the unique second-order differential operator vanishing on constants
and characterized on linear observables by the pairings induced by $K_L$.

Since the theory is free, there is no interaction term and 
the quantum master equation is automatic. 
Thus the scale-$L$ quantum observables on an open subset 
$U \subset \bbC^{1|1}$ are given by the BD algebra \cite[\S8.2]{CG2}: 
\begin{align}\label{eq:2:qObsL}
 \qObs{L}(U) \ceq 
 \Bigl(\Sym\bigl(\clE_c(U)^\vee[1]\bigr)\dbr{\hbar}, \, d_{\q,L}\Bigr),
 \qquad
 d_{\q,L} \ceq d_{\tcl}+\hbar\,\Delta_L,
\end{align}
where\footnote{As in \eqref{eq:2:cObs}, $\Sym$ denotes the completed 
symmetric algebra, and we suppress the completion symbol $\wh{\ }$.}
$d_{\tcl}$ is the classical BV differential on $\cObs{}(U)$ 
induced by $Q=\bpd$, given in \eqref{eq:2:cObs}.

For $0<\ep<L$, let $P_{\ep<L}$ be 
the regularized propagator from \eqref{eq:2:PeL}. 
If $U_1,\dotsc,U_k$ are pairwise disjoint opens in $\sC$, 
then the kernel $P_{\ep<L}$ is smooth on $U_i \times U_j$ for $i \ne j$. 
Hence it defines a contraction operator\footnote{As in \eqref{eq:2:cfp}, 
 the tensor product is the completed one, but we suppress $\wh{\ }$.}
\[
 \pd_{P_{\ep<L}}\colon 
 \qObs{L}(U_1) \otimes\dotsb\otimes \qObs{L}(U_k) \lto
 \qObs{L}(U_1) \otimes\dotsb\otimes \qObs{L}(U_k)[-1]
\]
by summing all pairwise contractions via $P_{\ep<L}$ 
between different tensor factors. 
Then, denoting $U \ceq U_1 \sqcup \cdots \sqcup U_k$,
we define the factorization product $\mu^{\q}_U$ of $\qObs{L}(U)$ by
\begin{align}\label{eq:2:qfm}
 \mu^{\q}_U \ceq \mu^{\tcl}_U \circ
 \exp\bigl(\hbar\,\pd_{P_{\ep<L}}\bigr)\colon 
 \qObs{L}(U_1) \otimes \dotsb \otimes \qObs{L}(U_k) \lto \qObs{L}(U),
\end{align}
where $\mu^{\tcl}$ is the factorization product on $\cObs{}$. 

Because the opens are pairwise disjoint, 
the kernel $P_{\ep<L}$ is smooth on the relevant product, 
so the right-hand side of \eqref{eq:2:qfm} is well-defined 
and independent of $\ep$ as $\ep\to 0$. 
Since the theory is free, no counterterms occur, and the family of products
\eqref{eq:2:qfm} satisfies the prefactorization axioms. 
Imposing Weiss descent, 
we obtain an $N=1$ SUSY factorization algebra on $\bbC^{1|1}$
over the coefficient ring $\bbC\dbr{\hbar}$, which we denote by
\[
 \qObs{}.
\]

\begin{prp}\label{prp:2:qObsFA}
The $N=1$ SUSY factorization algebra $\qObs{}$
satisfies the conditions \ref{i:1:u}--\ref{i:1:hol} of \cref{thm:1:533}.
Moreover, we have 
\begin{align}\label{eq:2:qObs/h=cObs}
 \qObs{}/\hbar\qObs{} \cong \cObs{}.
\end{align}
\end{prp}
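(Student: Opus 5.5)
The isomorphism \eqref{eq:2:qObs/h=cObs} comes first, because it both organizes the remaining checks and reduces several of them to the classical case. Setting $\hbar=0$ in \eqref{eq:2:qObsL} kills $\hbar\Delta_L$, so the differential becomes $d_{\tcl}$. In \eqref{eq:2:qfm} it replaces $\exp(\hbar\,\pd_{P_{\ep<L}})$ by the identity, so the product becomes $\mu^{\tcl}_U$ of \eqref{eq:2:cfp}. The underlying graded spaces $\Sym(\clE_c(U)^\vee[\pm1])$ are identified, matching the shift conventions of \eqref{eq:2:cObs} and \eqref{eq:2:qObsL}. Extension maps are unchanged, so we get an isomorphism of prefactorization algebras. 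Weiss descent commutes with reduction mod $\hbar$, since this is a cofiber of $\hbar$, which commutes with homotopy colimits. This gives \eqref{eq:2:qObs/h=cObs}.

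The conditions are then checked in order. \emph{Unitality} \ref{i:1:u}: the constant $1\in\Sym$ is a unit. \emph{Symmetry} \ref{i:1:SETI}: define $\rho$ on generators by letting $\pd_z$, $D$, $E$ act on fields as in \cref{ss:2.7}, with $\pd_{\ol z}$, $\ol D$, $\ol E$ acting by Lie derivative. The homotopies $\eta_{\ol z}$, $\eta_{\ol D}$, $\eta_{\ol E}$ act by contraction with $\pd_{\ol z}$, following \cite[\S5.2]{CG1}, so that $[\bpd,\iota_{\pd_{\ol z}}]=L_{\pd_{\ol z}}$. One must also check that $\rho$ intertwines with $\Delta_L$ and with $\pd_{P_{\ep<L}}$. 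Here $K_L$ and $P_{\ep<L}$ are built from the flat metric, which is invariant under translations and rotations, and they are trivial in $\zeta$; since $D$ acts through $\pd_\zeta$ and $\pd_z$, this gives compatibility. \emph{Disk independence} \ref{i:1:di}: filter by $\hbar$-adic order, or by polynomial degree, since $\Delta_L$ lowers degree by $2$. On the associated graded the question becomes the classical one for $\cObs{}$. There, $H^\bl(\cObs{}(\sD_r))$ is $\Sym$ of the linear dual of compactly supported Dolbeault cohomology, which by Serre duality is dual to holomorphic functions on $\bbD_r$ tensored with $\Lambda(\zeta)$. The restriction $\shO(\bbD_s)\to\shO(\bbD_r)$ is not an isomorphism, but its dual map is a quasi-isomorphism at the level of the $S^1$-weight-finite parts. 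I would follow \cite[\S5.3]{CG1} and check disk independence on each weight space; a spectral sequence argument then transfers it to $\qObs{}$.

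\emph{Weight boundedness} \ref{i:1:rho(E)}: the linear generators $\pd_z^n\gamma$, $\pd_z^n c$, $\pd_z^n\beta$, $\pd_z^n b$ have $\rho(E)$-weights lying in $\thf\bbZ$ and bounded below, since the $\zeta$-components carry weight $\thf$. Each weight space of $\Sym$ is finite dimensional, and this persists to $\qObs{}$. \emph{Holomorphic dependence} \ref{i:1:hol}: $m_{r,R,n}$ is obtained from extension maps composed with $\exp(\hbar\,\pd_P)$. The kernel $P$ is smooth off the diagonal, so the dependence on the $z_i$ is smooth, and translation acts polynomially in $\zeta$. For the Cauchy--Riemann homotopy, take $h^{(i)}=m\circ\rho(\eta_{\ol z})$ acting on the $i$-th insertion, exactly as in \cite[Lemma 5.3.4]{CG1}, using super-translation invariance.

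I expect the main obstacle to be disk independence. The cochain-level extension maps are not isomorphisms, so the statement holds only in the weight-completed sense of \cite{CG1}. I would prove it weight by weight, using the $S^1$-equivariance to split the complexes into weight spaces.
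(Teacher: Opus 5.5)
Your plan follows the paper for the classical limit, unitality and the weight count (iv). It departs from the paper at (iii) and (v), and in both places your route, which is the one of \cite{CG1}, is sounder than the paper's text.

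For (iii), the paper asserts that $\sD_r\inj\sD_s$ induces a quasi-isomorphism of field complexes and dismisses $\hbar\Delta_L$ as ``smoothing''. As you observe, $\shO(\bbD_s)\to\shO(\bbD_r)$ is not surjective, so (iii) fails as literally stated. Your $\hbar$-adic filtration combined with a weight-by-weight comparison is the correct replacement.

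For (v), the paper moves $\pd_{\ol z_i}$ onto the propagator via $QP_{\ep<L}=K_\ep-K_L$. This ignores that the translated inputs themselves depend on $\ol z_i$. Your $h^{(i)}=m\circ\rho(\eta_{\ol z})$ in the $i$-th slot is the standard mechanism.

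The price of your route is that you verify the weight-wise hypotheses of \cite[Theorem 5.3.3]{CG1}, not (iii)--(iv) as written, and you should say so explicitly. $\bbV$ must then be read as $\bigoplus_l H^\bl\bigl(\qObs{}(\sD_1)_l\bigr)$, the weight-finite part, not a ``weight-completed'' object. The full $H^\bl(\qObs{}(\sD_1))$ is not locally finite: the class of the evaluation observable $\gamma^i(\thf)$ has infinitely many nonzero weight components.

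Three steps need repair.

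\textbf{Weight spaces are not finite dimensional.} Your claim ``each weight space of $\Sym$ is finite dimensional'' is false, and the paper's proof makes the same claim. The observables $\gamma^i(0)^k$, $k\ge 0$, are all $S^1$-invariant. They are $d_{\q,L}$-closed, since $\Delta_L$ has no $\Pi$ to contract them with. Their classical limits are independent in cohomology. So already the weight-$0$ space is infinite dimensional. Drop this claim: the pole-order argument only uses lower-boundedness and the weight decomposition.

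\textbf{The homotopy must respect the quantum structure.} Your $h^{(i)}$ requires $\rho(\eta_{\ol z})=\iota_{\pd_{\ol z}}$ to be a derivation of the quantum structure. That is, it must commute with $\Delta_L$ and $\pd_{P_{\ep<L}}$, so that $[d_{\q,L},\rho(\eta_{\ol z})]=\rho(\pd_{\ol z})$ holds and the Leibniz rule holds for $\mu^{\q}$. Your metric-invariance remark covers $\pd_z$, $D$ and rotations, but not this. It does hold:
\begin{itemize}
\item translation invariance makes $K_L$ a multiple of $d\ol z_1-d\ol z_2$, which is killed by $\iota_1+\iota_2$;
\item the kernel of $P_{\ep<L}=\int_\ep^L Q^{\GF}e^{-tH}\,dt$ contains no $d\ol z$ at all.
\end{itemize}

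\textbf{The barred generators cannot be realized as you describe.} Your explicit $\rho$ on the barred generators fails for three reasons.
\begin{itemize}
\item Contraction with $\pd_{\ol z}$ can only homotope $\pd_{\ol z}$, not $\ol D$ or $\ol E$.
\item The natural choice $\iota_{\ol z\pd_{\ol z}}$ for $\ol E$ does not commute with $\Delta_L$: on $d\ol z_1-d\ol z_2$ it produces $\ol z_1-\ol z_2$. Likewise $K_L$ is invariant under the real rotation $E-\ol E$ but not under $\ol E$ alone, so even $\rho(\ol E)$ is not a cochain map at fixed $L$.
\item The source has no odd antiholomorphic coordinate, so there is no vector field $\ol D$ with $[\ol D,\ol D]=2\pd_{\ol z}$ and $[D,\ol D]=0$ to act by Lie derivative.
\end{itemize}
The paper's proof is silent on these generators as well. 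You should restrict (ii) to $\pd_z$, $D$, $\pd_{\ol z}$, $\eta_{\ol z}$ together with the $S^1$-action. That is all your Cauchy--Riemann homotopy and the pole-order argument actually use.
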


\begin{proof}
Fix a scale $L>0$. 
By definition \eqref{eq:2:qObsL},
\[
 \qObs{L}(U) = 
 \Bigl(\Sym\bigl(\clE_c(U)^\vee[1]\bigr)\dbr{\hbar},\, d_{\q,L}\Bigr), \quad
 d_{\q,L} \ceq d_{\tcl}+\hbar\Delta_L,
\]
where $d_{\tcl}$ is the classical BV differential and 
$\Delta_L$ is the scale-$L$ BV Laplacian determined by the heat kernel $K_L$. 
Since the theory is free, there is no interaction functional, 
hence no counterterm or renormalization ambiguity. 
Therefore the quantum observables are obtained from the classical ones 
by adjoining the second-order BV operator $\hbar\Delta_L$, 
and their classical limit satisfies \eqref{eq:2:qObs/h=cObs}.

We now verify the conditions \ref{i:1:u}--\ref{i:1:hol} of \cref{thm:1:533}.

\smallskip

Condition \ref{i:1:u}: unitality.
For each open $U \subset \bbC^{1|1}$, the algebra $\qObs{}(U)$ is 
a unital BD algebra whose unit is the constant functional
\[
 1 \in \Sym^0\bigl(\clE_c(U)^\vee[1]\bigr)\dbr{\hbar} \cong \bbC\dbr{\hbar}.
\]
Since both $d_{\tcl}$ and $\Delta_L$ vanish on constants, 
we have $d_{\q,L}(1)=0$. 
Moreover, the factorization products preserve the unit: 
the classical product $\mu^{\tcl}$ is unital, 
and the contraction operator $\pd_{P_{\ep<L}}$ 
annihilates constants. Hence $\qObs{}$ is unital.

\smallskip

Condition \ref{i:1:SETI}: 
$S^1$-equivariance and holomorphic super-translation invariance.
Let $D = \pd_\zeta+\zeta\pd_z$ be the odd super-translation operator. 
It acts coefficient-wise on the field complex $\clE(U)$ 
and commutes with $Q=\bpd$. 
Since the BV pairing is defined by integration against 
the translation-invariant Berezinian density, 
the induced derivation on $\cObs{}(U)$ commutes with $d_{\tcl}$. 
The gauge-fixing operator $Q^{\GF}=\bpd^{\,*}$ is defined from
the flat translation-invariant metric on the $\bbC$, 
hence it commutes with the holomorphic translations and 
therefore so do the Laplacian $H=[Q,Q^{\GF}]$, the heat kernel $K_L$, 
the BV Laplacian $\Delta_L$, and the propagator $P_{\ep<L}$. 
Thus the induced derivations descend to $\qObs{}$, 
and $\qObs{}$ is holomorphically super-translation invariant.

Similarly, the rotation
$R_\phi\colon (z,\zeta) \mto (e^{i\phi}z,e^{i\phi/2}\zeta)$
preserves the flat metric, the Berezinian density, the operator $Q=\bpd$,
the gauge-fixing operator $Q^{\GF}$, the Laplacian $H$, 
the heat kernel, and the propagator. 
Therefore $\qObs{}$ is $S^1$-equivariant.

\smallskip

Condition \ref{i:1:di}: disk independence.
Let $\sD_r \inj \sD_s$ ($0 < r \le s$) be an inclusion of SUSY disks. 
On the reduced disk $D_t \subset \bbC$, 
the Dolbeault complex is acyclic in positive degree, 
and its degree-zero cohomology consists of holomorphic functions. 
Hence the inclusion induces a quasi-isomorphism on the field complexes, 
and therefore on the classical observables $\cObs{}(\sD_r)\to \cObs{}(\sD_s)$. 
The quantum differential differs from the classical one 
by the smoothing operator $\hbar\Delta_L$, 
which does not alter the local quasi-isomorphism type. 
Hence $\qObs{}(\sD_r) \lto \qObs{}(\sD_s)$ is a quasi-isomorphism. 

\smallskip

Condition \ref{i:1:rho(E)}: weight boundedness.
Let $E=z\pd_z+\hf\zeta\pd_\zeta$ be the infinitesimal generator 
of the $S^1$-rotation. 
By the $S^1$-equivariance established above, 
$E$ acts on $\qObs{}(\sD_1)$ and commutes with the differential. 
On local holomorphic jets at the origin, the coordinate $z$ has weight $1$, 
the odd coordinate $\zeta$ has weight $1/2$, 
and each holomorphic derivative $\pd_z$ raises weight by $1$. 
Hence, the generators of the free theory carry weights 
in a lower-bounded subset of $\hf\bbZ$, 
and each fixed weight space is finite-dimensional. 
Passing to cohomology, we obtain a locally finite, 
lower-bounded $\hf\bbZ$-grading on
$\bbV(\qObs{}) = H^\bullet\!\bigl(\qObs{}(\sD_1)\bigr)$,
so condition \ref{i:1:rho(E)} holds.

\smallskip

Condition \ref{i:1:hol}: holomorphic dependence.
Fix $0<r\ll R$ and a configuration $\ul{x}=(Z_1,\dotsc,Z_n)$, 
$Z_i=(z_i,\zeta_i)$, such that the translated SUSY disks $\sD_r(Z_i)$ are 
pairwise disjoint and contained in $\sD_R$. 
The corresponding $n$-point factorization map is obtained by 
translating observables from $\sD_r$ to each $\sD_r(Z_i)$, 
then applying Wick contractions determined by the propagator, 
and finally extending into $\sD_R$. 
Since the translations depend smoothly on $(z_i,\ol{z}_i)$ 
and polynomially on $\zeta_i$, 
and since the heat kernel and propagator are smooth away from the diagonal
and polynomial in the odd variables, 
the resulting multilinear map\footnote{As before, 
 the tensor product $\otimes n$ is the completed one, but we suppress $\wh{\ }$.}
\[
 m^q_{r,R,n}(\ul{x})\colon \qObs{}(\sD_r)^{\otimes n} \lto \qObs{}(\sD_R)
\]
depends smoothly on $(z_i,\ol{z}_i)$ and polynomially on $\zeta_i$.

It remains to prove the Cauchy--Riemann condition up to homotopy. 
The only anti-holomorphic dependence of the factorization product is 
through the smooth kernels appearing in the contractions. 
Differentiating with respect to $\ol{z}_i$ 
may therefore be moved onto the propagator. 
By the kernel form \eqref{eq:2:ker-hot} of the homotopy identity
\[
 Q_{Z_i}P_{\ep<L}(Z_i,Z_j) = K_{\ep}(Z_i,Z_j)-K_L(Z_i,Z_j),
\]
the anti-holomorphic derivative is $Q$-exact 
up to the difference of smooth heat kernels. 
Integrating these smooth kernels produces a multilinear operator
\[
 h^{(i)}_{r,R,n}(\ul{x})\colon 
 \qObs{}(\sD_r)^{\otimes n} \lto \qObs{}(\sD_R)[1]
\]
depending smoothly on $\ul{x}$, such that
\[
 \pd_{\ol z_i}m^q_{r,R,n}(\ul{x}) = 
 d_{\q}\,h^{(i)}_{r,R,n}(\ul{x}) + h^{(i)}_{r,R,n}(\ul{x})\,d_{\q}.
\]
Hence the induced operations on cohomology are holomorphic 
in the variables $z_i$, and condition {i:1:hol} is satisfied.

All the hypotheses of \cref{thm:1:533} are therefore verified. 
This completes the proof.
\end{proof}

Then, applying \cref{thm:1:533} to the factorization algebra $\qObs{}$, 
we have:

\begin{thm}\label{thm:2:qObsVA}
The cohomology 
\[
 \bbV(\qObs{}) \ceq H^\bullet\!\bigl(\qObs{}(\sD_1)\bigr)
\]
carries a natural structure of 
an $N_K=1$ SUSY vertex algebra over $\bbC\dbr{\hbar}$.
More precisely:
\begin{enumerate}
\item 
The parity on $\bbV(\qObs{})$ 
is induced by the cohomological grading modulo $2$.

\item 
The vacuum vector $\vac\in \bbV(\qObs{})$
is induced by the unit $1\in \qObs{}(\emptyset)\cong \bbC\dbr{\hbar}$.

\item 
The even and odd translation operators on $\bbV(\qObs{})$ are induced by
\[
 T \ceq \rho(\pd_z), \quad
 S \ceq \rho(D) = \rho(\pd_\zeta+\zeta\pd_z),
\]
and satisfy $S^2=T$ on cohomology.

\item 
For every $a\in \bbV(\qObs{})$, there is a corresponding superfield
\[
 Y(a;Z)=Y(a;z,\zeta) \in \End(\bbV(\qObs{}))\dbr{z^{\pm1}}[\zeta]
\]
obtained from the factorization products of $\qObs{}$.
\end{enumerate}
\end{thm}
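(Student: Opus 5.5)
The plan is to obtain \cref{thm:2:qObsVA} as a direct application of \cref{thm:1:533}, with \cref{prp:2:qObsFA} supplying all the hypotheses. The one extra point is to track the coefficient ring $\bbC\dbr{\hbar}$ and to identify each structure in items (1)--(4) with the corresponding item of \cref{thm:1:533}.

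First, the setup. By \cref{prp:2:qObsFA}, $\qObs{}$ is an $N=1$ SUSY factorization algebra satisfying conditions \ref{i:1:u}--\ref{i:1:hol}. Its values are complexes of $\bbC\dbr{\hbar}$-modules, and all structure maps are $\bbC\dbr{\hbar}$-linear. The factorization product \eqref{eq:2:qfm} is $\mu^{\tcl}\circ\exp(\hbar\,\pd_{P_{\ep<L}})$, and the derivations $T$, $S$, $\rho(E)$ and the homotopies $h_{\ol{z}}$, $h_{\ol{D}}$, $h_{\ol{E}}$ all commute with multiplication by $\hbar$. So we may rerun the proof of \cref{thm:1:533} verbatim in the symmetric monoidal category of cochain complexes of $\bbC\dbr{\hbar}$-modules in $\DVS$.

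Second, we read off the structures. The parity comes from \cref{thm:1:533}\ref{i:thm:1:533:parity}, i.e.\ degree mod $2$. For the vacuum, note that $\qObs{}(\emptyset)=\Sym(\clE_c(\emptyset)^\vee[1])\dbr{\hbar}=\bbC\dbr{\hbar}$. The unit $1$ there is $d_{\q,L}$-closed, as checked in \cref{prp:2:qObsFA}, so its image under extension to $\sD_1$ defines $\vac$. The translation operators are $T=\rho(\pd_z)$ and $S=\rho(D)$. The relation $[S,S]=2T$ holds already at cochain level, since $D^2=\pd_z$ and $D$ commutes with $Q$, $Q^{\GF}$, $K_L$ and $P_{\ep<L}$ (shown in \cref{prp:2:qObsFA}). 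Because $S$ is odd, this gives $S^2=T$ on cohomology. For the superfields, choose representatives in $\qObs{}(\sD_r)$ using disk independence, and set $Y(a;Z)b=[m_{r,R,2}(Z;0)(\wt a\otimes\wt b)]$ as in \eqref{eq:1:YaZb}. Well-definedness under change of representatives follows because $m_{r,R,2}$ is a cochain map, and the result is $\bbC\dbr{\hbar}$-bilinear.

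The main obstacle is making sure the Laurent expansion in $z$, which \cref{thm:1:533} obtains via weight boundedness, makes sense over $\bbC\dbr{\hbar}$. This is needed to land in $\End(\bbV(\qObs{}))\dbr{z^{\pm1}}[\zeta]$ and to get SUSY locality. The difficulty is that $\bbV(\qObs{})$ is no longer locally finite over $\bbC$. I would handle this $\hbar$-adically, in three steps.
\begin{enumerate}
\item $\rho(E)$ commutes with $\hbar$, so weight spaces are $\bbC\dbr{\hbar}$-submodules.
\item Since the theory is free, the contraction $\exp(\hbar\,\pd_{P})$ preserves weight (by $S^1$-equivariance of the propagator). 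Hence each weight space of the cochain complex is a finite-rank free $\bbC\dbr{\hbar}$-module, i.e.\ the weight-$w$ piece of the classical complex tensored with $\bbC\dbr{\hbar}$.
\item Apply the argument of Step 3 of \cref{thm:1:533} weight by weight. The pole order in $z$ is bounded by weight differences independently of the $\hbar$-order, so a single $N$ works for all powers of $\hbar$.
\end{enumerate}
The quotient \eqref{eq:2:qObs/h=cObs} serves as a consistency check: reducing mod $\hbar$ recovers the commutative structure of \cref{thm:2:V(cObs)}. If the weight-space cohomology had $\hbar$-torsion, I would fall back to working over $\bbC((\hbar))$ or stating the result for the $\hbar$-adically completed module. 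Given the explicit free-field form, with the singular propagator from \cref{prp:2:sCauchy}, I expect no torsion.
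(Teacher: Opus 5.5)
Your main line (apply \cref{thm:1:533} with hypotheses from \cref{prp:2:qObsFA}, then read off parity, vacuum, $T$, $S$ and $Y(a;Z)$) is exactly the paper's proof, which is nothing more than that citation. Your checks of $\bbC\dbr{\hbar}$-linearity, closedness of the unit, and $[S,S]=2T$ at cochain level are harmless additions. The problem is item (2) of the $\hbar$-adic patch you propose for weight boundedness. At the cochain level, the weight spaces of $\qObs{}(\sD_1)$ are neither of finite rank nor bounded below. Pair the fields against forms with coefficient $\chi(\abs{z})e^{ik\arg z}$, where $\chi$ is any radial bump function supported in $0<\abs{z}<1$ and $k\in\bbZ$. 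For fixed $k$ this gives infinitely many independent linear observables of a single weight, and every weight occurs. A smaller point: what makes the one-disk complex $S^1$-graded is the rotation invariance of $\Delta_L$ in $d_{\q,L}=d_{\tcl}+\hbar\Delta_L$. The contraction $\exp(\hbar\,\pd_{P_{\ep<L}})$ only enters the equivariance of $m_{r,R,2}$.

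Any finiteness statement must therefore be about cohomology. There the classes are (completed) polynomials in holomorphic jets of $\Phi,\Pi$ at $0$. In the usual conventions the classical cohomology sits in a single degree, so the $\hbar$-adic spectral sequence degenerates and there is no $\hbar$-torsion. Even so, local finiteness fails on cohomology. The constant modes $\gamma^i(0)$ have weight $0$, so the weight-$0$ space contains all polynomials in them. It is neither finite-dimensional over $\bbC$ nor of finite rank over $\bbC\dbr{\hbar}$, just as the weight spaces of $\CdR_X$ have finite rank only over $\shO_X$. This also undercuts the paper's verification of condition \ref{i:1:rho(E)} in \cref{prp:2:qObsFA}. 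A workable replacement is a Costello--Gwilliam-type hypothesis, for instance that each weight space of cohomology is a countable sequential limit of finite-dimensional spaces; this covers both $\bbC\dbr{\hbar}$ and power series in $\gamma^i(0)$. Fortunately your item (3) does not actually use item (2). The pole bound follows from rotation equivariance, holomorphy, lower-boundedness of weights on cohomology, and the fact that $\hbar$ has weight $0$, so a single $N$ works uniformly in $\hbar$. Drop item (2) and state item (3) for cohomology rather than for the cochain complex. Finally, there is a point you inherit from the statement itself. Since $\abs{\zeta}=0$, the classes $\gamma^i(0)$ and $S\gamma^i(0)=\pm c^i(0)$ have the same cohomological degree. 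So parity cannot be degree mod $2$ if $S$ is to be odd and nonzero. In the super-$\DVS$ setting of \cref{ss:2.6}, the parity must include that of $\zeta$ (cf.\ \cref{rmk:1:DVS}), and citing \cref{thm:1:533}\ref{i:thm:1:533:parity} does not supply this.
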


\begin{proof}
The existence of a $N_K=1$ SUSY vertex algebra structure on $\bbV(\qObs{})$
is a corollary of \cref{prp:2:qObsFA} and \cref{thm:1:533}.
The descriptions of the parity, vacuum, and translation operators are 
exactly those given by \cref{thm:1:533}. 
The state-superfield correspondence is obtained from 
the two-disk factorization product. 
This proves the theorem.
\end{proof}

Next, we identify the resulting SUSY vertex algebra explicitly. 

\begin{cor}\label[cor]{cor:2:bcbg}
The $N_K=1$ SUSY vertex algebra $\bbV(\qObs{})$ is 
the $N_K=1$ SUSY vertex algebra of the free $bc$-$\beta\gamma$ system. 
More precisely, denoting by 
\[
 C^i(z,\zeta) = c^i(z)+\zeta\gamma^i(z), \quad 
 B_i(z,\zeta) = b_i(z)+\zeta \beta_i(z)  \quad (i=1,\dotsc,n)
\]
the classes in $\bbV(\qObs{})$ of linear quantum observables associated to 
the even superfield $\Phi^i$ \eqref{eq:1:Phii} 
and the odd $\Pi_i$ \eqref{eq:1:Pii},
their SUSY OPE are 
\begin{align}\label{eq:2:qOPE}
 C^i(Z_1) B_j(Z_2) \sim \hbar \frac{\delta^i_j}{z_{12}}, \quad
 C^i(Z_1) C^j(Z_2) \sim 0, \quad
 B_i(Z_1) B_j(Z_2) \sim 0
\end{align}
with $Z_k=(z_k,\zeta_k)$, $k=1,2$, and $z_{12} \ceq z_1-z_2-\zeta_1\zeta_2$.
Equivalently, the corresponding $N_K=1$ SUSY $\Lambda$-brackets are
\begin{equation}\label{eq:2:qLam}
 [C^i{}_\Lambda B_j] = \hbar \delta^i_j, \quad
 [C^i{}_\Lambda C^j] = 0, \quad
 [B_i{}_\Lambda B_j] = 0.
\end{equation}
\end{cor}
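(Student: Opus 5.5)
The plan is to compute the two-point factorization product of linear quantum observables directly from the quantum product \eqref{eq:2:qfm}, and then read off the OPE through the superfield \eqref{eq:1:YaZb} of \cref{thm:1:533}.

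First, identify the linear classes. A linear observable on $\sD_r$ is pairing with a compactly supported test superfield. By disk independence and the Dolbeault lemma, its cohomology class depends only on a holomorphic jet at the origin. Pairing with $\delta$-type test data in the $\Pi$-direction gives the class $C^i$ associated with $\Phi^i$. Pairing in the $\Phi$-direction gives the class $B_i$ associated with $\Pi_i$. Their translates by $\tau^{\SUSY}_Z$ produce the superfields $C^i(Z)$, $B_i(Z)$. The components $c^i,\gamma^i$ are then extracted by applying $S=\rho(D)$ and setting $\zeta=0$, using $D_ZY(a;Z)=Y(Sa;Z)$.

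Next, compute the product. Take $\wt a=C^i$ placed at $\sD_r(Z_1)$ and $\wt b=B_j$ placed at $\sD_r(Z_2)$, with disjoint disks. Then
\[
 \mu^{\q}(\wt a\otimes\wt b)=\mu^{\tcl}(\wt a\otimes\wt b)+\hbar\,\pd_{P_{\ep<L}}(\wt a\otimes\wt b).
\]
There are no higher terms, since each observable is linear. The contraction term is the scalar obtained by pairing the two test superfields against the propagator kernel $P_{\ep<L}(Z_1,Z_2)$. The quadratic term $\mu^{\tcl}$ is the classical commutative product; by the argument in the first half of the proof of \cref{thm:1:SPVA}, it extends holomorphically across $z_{12}=0$. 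Hence it contributes only the regular part of the OPE. For the pairs $(C,C)$ and $(B,B)$, the pairing $\pair{-,-}$ between same-type fields vanishes, because the propagator only pairs $V$ with $V^\vee$. Their contraction is therefore zero, which gives the trivial OPEs.

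For the pair $(C,B)$, the scalar contraction is a $Q$-closed function of $(Z_1,Z_2)$ on the configuration space. I will replace $P_{\ep<L}$ by its singular part using \cref{prp:2:sCauchy}. The difference $P_{\ep<L}-P_{\sing}$ is smooth along the super-diagonal, so after taking cohomology it contributes only terms regular in $z_{12}$. Convolving the super-Cauchy kernel $\frac{1}{2\pi i}\frac{d\ol z_1}{z_{12}}$ with the delta-type test data reproduces $\delta^i_j/z_{12}$. This yields the OPE \eqref{eq:2:qOPE}, with the factor $\hbar$ coming from \eqref{eq:2:qfm}.

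Finally, convert to $\Lambda$-brackets. Apply the super-residue $\sres_{Z=0}\,e^{z\lambda+\zeta\chi}(\cdots)_{\sing}$, as in \eqref{eq:1:P}, to $\hbar\delta^i_j/z_{12}$. With $Z_2=0$ this is $\hbar\delta^i_j/z$, whose only singular coefficient is $z^{-1}$. Hence $[C^i{}_\Lambda B_j]=\hbar\delta^i_j$, which matches \cite[Example 5.4]{HK}.

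The main obstacle is the middle step: making rigorous that the cohomology class of the contraction equals exactly $\delta^i_j/z_{12}$ modulo regular terms. This requires that the test-function regularization and the limit $\ep\to0$ commute with passing to cohomology. It also requires tracking the $\zeta_1\zeta_2$ correction and the Berezin signs, so that no stray sign or factor of $2\pi i$ survives the normalization of the linear observables. I would first do the computation in components $(\gamma,c;\beta,b)$ using the ordinary Cauchy kernel, which gives $\gamma\beta\sim1/z$ and $cb\sim1/z$. I would then reassemble these into superfields and check the $\zeta_1\zeta_2/z^2$ term against the expansion of $1/z_{12}$.
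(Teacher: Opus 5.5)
Your strategy is the one the paper uses: take the singular part of the two-point product to be the $\hbar$-weighted Wick contraction, evaluate it with the kernel of \cref{prp:2:sCauchy}, then apply the Heluani--Kac OPE/$\Lambda$-bracket dictionary. Your final step, however, is wrong, and the error hides the fact that the two displays in the statement are not equivalent.

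By \eqref{eq:1:P}, $\sres_{Z=0}$ takes the coefficient of $z^{-1}\zeta$, and $e^{z\lambda+\zeta\chi}=e^{z\lambda}(1+\zeta\chi)$. Hence
\[
 \sres_{Z=0}\Bigl(e^{z\lambda+\zeta\chi}\,\frac{\hbar}{z}\Bigr)=\pm\hbar\chi,
 \qquad
 \sres_{Z=0}\Bigl(e^{z\lambda+\zeta\chi}\,\frac{\hbar\,\zeta}{z}\Bigr)=\hbar .
\]
You read off the coefficient of $z^{-1}$ instead of $z^{-1}\zeta$. The OPE $\hbar\delta^i_j/z_{12}$ therefore corresponds to $[C^i{}_\Lambda B_j]=\pm\hbar\chi\,\delta^i_j$, not to $\hbar\delta^i_j$.

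A parity count shows that no argument can establish both displays as written. Extracting the $\zeta$-coefficient is odd, so $[a_\Lambda b]$ has parity $p(a)+p(b)+\ol{1}$. A scalar value $\hbar$ thus forces $p(C^i)+p(B_j)=\ol{1}$. Since the state-superfield map is parity-preserving, $Y(C^i;Z)B_j$ must then be odd, whereas $\hbar/z_{12}$ is even. The paper's proof only cites the dictionary without computing, so it does not resolve this either.

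The component check you propose at the end decides which display is correct. Use $\Phi^i=\gamma^i+\zeta c^i$ and $\Pi_j=b_j+\zeta\beta_j$; note that the statement's $C^i=c^i+\zeta\gamma^i$ has the components swapped.
\begin{itemize}
\item The form $\omega=\int dz\,d\ol{z}\,(\delta\beta_i\,\delta\gamma^i+\delta b_i\,\delta c^i)$ pairs only $\gamma$ with $\beta$ and $c$ with $b$.
\item So the singular part of $\Phi^i(Z_1)\Pi_j(Z_2)$ has the form $\hbar\delta^i_j(\alpha\zeta_1+\alpha'\zeta_2)/(z_1-z_2)$.
\item Super-translation invariance forces $\alpha'=-\alpha$, giving a multiple of $\zeta_{12}/z_{12}=(\zeta_1-\zeta_2)/(z_1-z_2)$.
\item No contraction produces the $\zeta$-free term $1/(z_1-z_2)$ or the term $\zeta_1\zeta_2/(z_1-z_2)^2$ of $1/z_{12}$, so your check would fail.
\end{itemize}

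The kernel side agrees. The BV pairing is Berezin integration, whose kernel is the odd super-delta $(\zeta_1-\zeta_2)\delta(z_1-z_2)$ of \eqref{eq:2:Sdelta}. The fundamental solution of $\bpd$ against it is a multiple of $\zeta_{12}/z_{12}$. By contrast, $\bpd_{z_1}(1/z_{12})$ is a multiple of $\delta(z_1-z_2)-\zeta_1\zeta_2\,\pd_{z_1}\delta(z_1-z_2)$, which is even in the $\zeta$'s. So the two displays of \cref{prp:2:sCauchy} are mutually inconsistent, and your middle step inherits the wrong kernel.

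Running your argument with the kernel $\zeta_{12}/z_{12}$ gives $C^i(Z_1)B_j(Z_2)\sim\hbar\delta^i_j\,\zeta_{12}/z_{12}$. The correctly computed super-residue then gives $[C^i{}_\Lambda B_j]=\hbar\delta^i_j$, up to sign conventions, matching the Ben-Zvi--Heluani--Szczesny free-field presentation. The $\Lambda$-bracket display of the statement is the right one; the OPE display is not.
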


\begin{proof}
\cref{thm:2:qObsVA} gives $\bbV(\qObs{})$ 
the structure of an $N_K=1$ SUSY vertex algebra. 
By construction, the singular part of the operator product is 
determined by the singular part of the factorization product, 
which in the free theory is given by Wick contraction with the propagator. 
By \cref{prp:2:sCauchy}, the singular part of that propagator is
\[
 \frac{1}{2\pi i}\frac{d\ol{z}_1}{z_{12}} \otimes \id_{V \otimes V^\vee}.
\]
Hence, the only nontrivial contraction is between $\Phi_i$ and $\Pi_j$, 
and it contributes the singular term
\[
 \hbar\,\frac{\delta_{ij}}{z_{12}}.
\]
This proves \eqref{eq:2:qOPE}. 
The equivalent $\Lambda$-bracket description \eqref{eq:2:qLam} 
follows from the correspondence  \cite[\S3.2.1, \S4.2]{HK} between 
the singular part of the SUSY OPE and the $N_K=1$ SUSY $\Lambda$-bracket.
\end{proof}

\begin{rmk}
The $\Lambda$-brackets in \eqref{eq:2:qLam} is equivalent to 
the $bc$-$\beta\gamma$ system 
$(a^i_{\MSV},b^i_{\MSV},\phi^i_{\MSV},\psi^i_{\MSV})_{i=1}^N$ in \cite{MSV}, 
whose $N_K=1$ SUSY vertex algebra structure is given by 
$B^i_{\BHS}(Z) \ceq b^i_{\MSV}(z)+\zeta \phi^i_{\MSV}(z)$ and 
$\Psi^i_{\BHS}(Z) \ceq \psi^i_{\MSV}(z)+\zeta a^i_{\MSV}(z)$ 
in \cite[Example 3.12]{BHS}.
The equivalence is given by 
$C^i(Z) = B^i_{\BHS}(Z)$ and $B_i(Z) = \Psi^i_{\BHS}(Z)$.
In particular, the super-current
\[
 G(Z) \ceq \nod{B_i\,(S C^i)}(Z),
\]
where $\nod{\ }$ denotes the normally ordered product 
\cite[Definition 3.2.11]{HK},
gives an $N_K=1$ SUSY vertex algebra structure on $\bbV(\qObs{})$.
It can be expanded as
\[ 
 G(Z) = G(z) + \zeta T(z), \quad 
 G(z) \ceq \nod{c^i\,\beta_i}(z), \quad 
 T(z) \ceq \nod{\beta_i\,(\pd \gamma^i)}(z) + \nod{b_i\,(\pd c^i)}(z).
\]
\end{rmk}

\section{\texorpdfstring{$N_K=1$}{NK=1} source and non-linear target}
\label{s:3}

In this section, we extend the linear-target construction of \cref{s:2} 
to the case where the target is an arbitrary complex manifold $X$. 
The goal is to lift the $N=1$ superfield argument of 
Ben-Zvi--Heluani--Szczesny \cite{BHS} 
from the language of SUSY vertex algebras 
to the language of SUSY factorization algebras. 
Concretely, we will construct, on each holomorphic coordinate chart of $X$,
a local quantum $N_K=1$ SUSY factorization algebra modeled on 
the free $bc$--$\beta\gamma$ system of \cref{s:2}, 
and then study its descent under holomorphic changes of target coordinates.
The resulting local-to-global object recovers 
the chiral de Rham complex $\CdR_X$ in the form of a sheaf of SUSY vertex algebras.

Throughout this section, the source is $\sSig=(\sC,\clD_{\std})$ 
with $\clD_{\std}$ generated by $D_Z \ceq \pd_{\zeta}+\zeta\pd_{z}$,
and we continue to denote by $z_{12} \ceq z_1-z_2-\zeta_1\zeta_2$
the $N_K=1$ super-distance \eqref{eq:1:Z12}.

\subsection{Globalization to a complex manifold}\label{ss:3.1}

Let $X$ be a complex manifold of complex dimension $n$. 
Our aim is to describe the holomorphic sigma model with source $\sSig$ 
and target $X$ in the BV formalism, at least locally on $X$, 
and then to glue the resulting local factorization algebras.

Fix a holomorphic coordinate chart $(V;x^1,\dotsc,x^n)$, $V \subset X$.
Over $V$, the tangent bundle $\rst{T_X}{V}$ is trivialized by 
the coordinate frame $\{ \pd/\pd x^i \}_{i=1}^n$, 
and therefore the local theory is modeled on the linear target $\bbC^n$,
coincides with the $bc$--$\beta\gamma$ system in \cref{s:2}. 
In particular, on a SUSY disk $U \subset \sSig$, we consider even superfields
$\Phi^i(z,\ol{z},\zeta) = \gamma^i(z,\ol{z})+\zeta c^i(z,\ol{z})$
and odd conjugate superfields
$\Pi_i(z,\ol{z},\zeta) = b_i(z,\ol{z})+\zeta\beta_i(z,\ol{z})$ for $i=1,\dotsc,n$
%
The local classical action on $U$ is
\begin{align}\label{eq:3:laf}
 S_V(U) \ceq \int_U dz\,d\ol{z}\,d\zeta \; \Pi_i\,\bpd \Phi^i, \quad 
 \bpd \ceq \pd_{\ol{z}}, 
\end{align}
and the corresponding BV pairing is
\begin{align}\label{eq:3:omega}
 \omega_{V}(U) = \int_U dz\,d\ol{z}\,d\zeta \; \delta \Pi_i\,\delta\Phi^i.
\end{align}
Thus, in a coordinate chart, we are exactly in the situation of \cref{s:2}, 
with $V=\bbC^n$ and $V^{\vee}$ paired by evaluation.

The point of the $N=1$ superfield formalism is that 
the local fields glue more naturally across coordinate changes 
than the individual component fields. 
Let $(V_\alpha;x^i_\alpha)$ and $(V_\beta;x^i_\beta)$ 
be two holomorphic charts, and write the coordinate change 
on the overlap $V_{\alpha\beta} \ceq V_\alpha \cap V_\beta$ as
\[
 x^i_\beta=f^i_{\beta\alpha}(x_\alpha).
\]
Then the superfields transform by substitution 
into the holomorphic coordinate change:
\begin{align}\label{eq:3:sfc}
 \Phi^i_\beta = f^i_{\beta\alpha}(\Phi_\alpha), \quad
 \Pi_{\beta,i} = \frac{\pd x^j_\alpha}{\pd x^i_\beta}(\Phi_\alpha) \Pi_{\alpha,j}.
\end{align}
This is the fundamental simplification of the $N=1$ formalism: 
the curved target geometry is absorbed into a tensorial transformation law 
for the superfields, exactly as in \cite[\S4]{BHS}. 
In particular, the coordinate changes preserve 
the local OPE singularities of the free theory.

We will use \eqref{eq:3:sfc} to 
descend the local factorization algebras from charts to $X$.

\subsection{Local non-linear structure and coordinate changes}\label{ss:3.2}

We now explain why the transformation law \eqref{eq:3:sfc} 
is compatible with the local BV theory.

Let $V \subset X$ be a holomorphic chart. 
The sheaf of superfields on a SUSY disk $U \subset \sSig$ is
\begin{align*}
 \clE_V(U) \ceq
 \Omega^{0,*}(U_{\tred}) \otimes \Lambda(\zeta) \otimes  \bbC^n \;\oplus\;
 \Omega^{1,*}(U_{\tred}) \otimes \Lambda(\zeta) \otimes (\bbC^n)^\vee
\end{align*}
with differential $Q=\bpd$. 
The first summand is spanned by the $\Phi^i$, 
and the second by the $\Pi_i$. 
The BV pairing \eqref{eq:3:omega} identifies 
the two summands as shifted duals, exactly as in \cref{s:2}.

Suppose now that $x_\beta=f_{\beta\alpha}(x_\alpha)$ is a holomorphic change of
target coordinates on an overlap $V_{\alpha\beta} \ceq V_\alpha \cap V_\beta$.
The assignment \eqref{eq:3:sfc} induces an isomorphism of sheaves of dg superspaces
\begin{align}\label{eq:3:fiso}
 \varphi^{\tcl}_{\beta\alpha}\colon 
 \rst{\clE_{V_\alpha}}{V_{\alpha\beta}} \lsto
 \rst{\clE_{V_\beta }}{V_{\alpha\beta}}.
\end{align}
Since $f_{\beta\alpha}$ is holomorphic, we have
$\bpd \Phi^i_\beta = 
 \frac{\pd f^i_{\beta\alpha}}{\pd x^{j}_\alpha}(\Phi_\alpha)\,
 \bpd \Phi^{j}_\alpha$.
Hence
\[
 \Pi_{\beta,i}\,\bpd \Phi^i_\beta =
 \frac{\pd x^j_\alpha}{\pd x^i_\beta }(\Phi_\alpha) \, \Pi_{\alpha,j} \cdot
 \frac{\pd x^i_\beta }{\pd x^k_\alpha}(\Phi_\alpha) \, \bpd \Phi^k_\alpha =
 \Pi_{\alpha,j} \, \bpd \Phi^j_\alpha,
\]
so the classical action $S_V$ \eqref{eq:3:laf} is invariant 
under target coordinate changes. Similarly,
\[
 \delta \Pi_{\beta, i} \, \delta \Phi^i_\beta  =
 \delta \Pi_{\alpha,j} \, \delta \Phi^j_\alpha, 
\]
and the BV pairing $\omega_V$ \eqref{eq:3:omega} is preserved.

Therefore, the local classical theory on coordinate charts 
glues strictly at the classical level, and we have:

\begin{prp}\label{prp:3:cgl}
The assignments $(V \subset X) \mto (\clE_V,Q,\omega_V,S_V)$
form a holomorphic atlas of local BV theories on $X$. 
On overlaps, the transition maps are given by \eqref{eq:3:fiso}, 
and these preserve both the BV pairing and the classical action.
\end{prp}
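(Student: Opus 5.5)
The statement has three parts. First, the local data over a chart assemble into a sheaf-like atlas. Second, the maps \eqref{eq:3:fiso} are well-defined isomorphisms of dg superspaces compatible with $Q$. Third, they preserve $\omega_V$ and $S_V$ and satisfy the cocycle condition. I would prove these in that order.

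\textbf{Well-definedness and the cocycle condition.} Interpret \eqref{eq:3:sfc} as a map on field configurations with $\Phi_\alpha$ taking values in $V_{\alpha\beta}$ (i.e.\ $\Phi_\alpha$ in the open subspace of superfields whose reduced body lands in $V_{\alpha\beta}$). Since $f_{\beta\alpha}$ is holomorphic, substituting the even superfield $\Phi_\alpha = \gamma_\alpha+\zeta c_\alpha$ into it is defined by Taylor expansion in the nilpotent directions ($\zeta$ and the positive form degrees in $d\ol z$). Concretely,
\[
 f(\gamma+\zeta c) = f(\gamma) + \zeta\, \pd_j f(\gamma)\, c^j ,
\]
plus finitely many higher terms coming from nilpotent $d\ol z$-components. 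The same expansion defines the Jacobian factor $\frac{\pd x_\alpha}{\pd x_\beta}(\Phi_\alpha)$. Invertibility follows by applying the same recipe to $f_{\alpha\beta}=f_{\beta\alpha}^{-1}$ and using the chain rule, which composition of substitutions respects. The identity $\varphi_{\gamma\beta}\circ\varphi_{\beta\alpha}=\varphi_{\gamma\alpha}$ on triple overlaps follows from the same chain-rule argument. For $\Pi$ it follows from multiplicativity of Jacobian matrices, $\frac{\pd x_\alpha}{\pd x_\beta}\frac{\pd x_\beta}{\pd x_\gamma}=\frac{\pd x_\alpha}{\pd x_\gamma}$.

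\textbf{Compatibility with $Q=\bpd$.} Because $f_{\beta\alpha}$ is holomorphic, $\bpd$ acts as a derivation on the superfield algebra and commutes with $\zeta$. The chain rule therefore gives $\bpd\Phi_\beta^i=\pd_jf^i(\Phi_\alpha)\,\bpd\Phi_\alpha^j$. Similarly, $\bpd\Pi_\beta$ is the transform of $\bpd\Pi_\alpha$ plus a term in which $\bpd$ hits the Jacobian factor. Since the Jacobian factor is holomorphic, that extra term is proportional to $\bpd\Phi_\alpha$, which vanishes on the derived-map locus $\bpd\Phi=0$. So $Q$ is intertwined there, and it is intertwined up to the equations of motion off-shell. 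Hence the map is compatible with the Hamiltonian vector fields, which is the sense in which a transition map of BV theories should be read.

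\textbf{Preservation of $S_V$ and $\omega_V$.} For $S_V$, I would reproduce the computation already displayed in the text, pairing $\Pi_\beta$ against $\bpd\Phi_\beta$ and contracting Jacobians. For $\omega_V$, the naive computation of $\delta\Pi_\beta\,\delta\Phi_\beta$ has an extra term: when $\delta$ hits the Jacobian in $\Pi_\beta$, one obtains
\[
 \pd_k\!\bigl(\tfrac{\pd x^j_\alpha}{\pd x^i_\beta}\bigr)\,\delta\Phi^k_\alpha\,\Pi_{\alpha,j}\,\pd_lf^i\,\delta\Phi^l_\alpha .
\]
I expect showing this term vanishes to be the main obstacle. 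My first attempt is to write the transformation as a cotangent lift. The one-form $\int\pair{\Pi,\delta\Phi}$ is manifestly invariant, since $\Pi_{\beta,i}\,\delta\Phi^i_\beta=\Pi_{\alpha,j}\,\delta\Phi^j_\alpha$ by the chain rule. Then $\omega_V=\delta\int\pair{\Pi,\delta\Phi}$ is invariant because $\delta$ commutes with pullback. Equivalently, the offending term is $\delta\Phi^k\delta\Phi^l$ contracted with $\pd_k\pd_l$ of a Jacobian-type expression. That contraction is symmetric in $k,l$, while $\delta\Phi^k\delta\Phi^l$ is antisymmetric with the Koszul sign for even $\Phi$, so the term cancels. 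I would check this sign carefully, since $\Phi$ is even and $\delta$ is odd in the de Rham grading. Together with the cocycle condition, this shows the local theories form a holomorphic atlas of BV theories as claimed.
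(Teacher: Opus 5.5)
Your treatment of $S_V$ and $\omega_V$ is correct and follows the paper's route. The paper's justification is only the chain-rule cancellation displayed before the proposition, plus the word ``similarly'' for $\delta\Pi_{\beta,i}\,\delta\Phi^i_\beta=\delta\Pi_{\alpha,j}\,\delta\Phi^j_\alpha$. That word silently drops the term where $\delta$ hits the Jacobian. Your cotangent-lift argument (the one-form $\int\pair{\Pi,\delta\Phi}$ is invariant, then apply $\delta$) supplies exactly the missing justification.

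If you also keep the direct version, make the symmetry explicit. Set $M^j{}_i\ceq \pd x^j_\alpha/\pd x^i_\beta$, evaluated at $\Phi_\alpha$. Differentiating $M^j{}_i\,\pd_l f^i=\delta^j_l$ gives $\pd_k M^j{}_i\,\pd_l f^i=-M^j{}_i\,\pd_k\pd_l f^i$. This is symmetric in $k,l$, so the graded antisymmetry of $\delta\Phi^k\,\delta\Phi^l$ kills it. Your cocycle check is the same chain-rule argument the paper gives in the proof of \cref{thm:3:obs}.

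The paragraph on $Q$, however, is wrong, and its conclusion does not follow. The map $\varphi^{\tcl}_{\beta\alpha}$ is a nonlinear map of field spaces. So compatibility with $Q$ means $Q_\alpha(\varphi^*F)=\varphi^*(Q_\beta F)$ for the coordinate functions $F$. It does not mean that $\bpd\Pi_\beta$ equals $M(\Phi_\alpha)\,\bpd\Pi_\alpha$.

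For $F=\Pi_{\beta,i}$, both sides equal $\bpd\bigl(M^j{}_i(\Phi_\alpha)\,\Pi_{\alpha,j}\bigr)$:
\begin{itemize}
\item on the left, $Q_\alpha$ is a derivation with $Q_\alpha\Phi_\alpha=\bpd\Phi_\alpha$, so $Q_\alpha$ hitting the holomorphic factor $M(\Phi_\alpha)$ produces precisely the ``extra term'' you found;
\item on the right, $\varphi^*$ commutes with $\bpd$, because $\varphi$ acts pointwise in $(z,\ol{z},\zeta)$ through holomorphic functions.
\end{itemize}
Thus $Q$ is intertwined exactly, off-shell, with no equation-of-motion defect.

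As written, you concede that the map is a chain map only modulo $\bpd\Phi_\alpha$, and then assert compatibility with the Hamiltonian vector fields. A map of that kind would not be an isomorphism of dg objects. Moreover, in the BV field space $\clE_V(U)$ the superfield $\Phi$ is not constrained by $\bpd\Phi=0$ in the first place.

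The cleanest fix is to drop that paragraph and deduce $Q$-compatibility at the end. Once $\varphi^*S_{V_\beta}=S_{V_\alpha}$ and $\varphi^*\omega_{V_\beta}=\omega_{V_\alpha}$ hold, the identity $Q=\{S,-\}_\omega$ gives $\varphi^*\circ Q_\beta=Q_\alpha\circ\varphi^*$ automatically.
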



In components, the transformation law \eqref{eq:3:sfc} reproduces 
the non-linear corrections of the curved $bc$--$\beta\gamma$ system
(see \cite[\S3.5]{MSV} for example). 
Indeed, writing $\Phi^i=\gamma^i+\zeta c^i$ 
and $\Pi_i=b_i+\zeta \beta_i$, we obtain
\begin{align*}
 \gamma^i_\beta &=
 f^i_{\beta\alpha}(\gamma_\alpha), \\
 c^i_\beta &=
 \frac{\pd f^i_{\beta\alpha}}{\pd x^j_\alpha}(\gamma_\alpha)\,c^j_\alpha, \\
 b_{\beta,i} &=
 \frac{\pd x^j_\alpha}{\pd x^i_\beta}(\gamma_\alpha)\,b_{\alpha,j}, \\
 \beta_{\beta,i} &=
 \frac{\pd x^j_\alpha}{\pd x^i_\beta}(\gamma_\alpha)\,\beta_{\alpha,j} +
 \frac{\pd^2x^j_\alpha}{\pd x^i_\beta\pd x^{k}_\beta}(\gamma_\alpha)
 \frac{\pd x^{k}_\beta}{\pd x^{\ell}_\alpha}(\gamma_\alpha)\,
 c^{\ell}_\alpha b_{\alpha,j}.
\end{align*}

\subsection{Local classical observables and SUSY factorization algebras}
\label{ss:3.3}

We now apply the formalism of \cref{s:1} and \cref{s:2} chart-wise on $X$.

Fix a holomorphic chart $V \subset X$. 
For each open $U \subset \sSig$, define the classical observables on $U$ by
\begin{align*}
 \cObs{V}(U) \ceq \Sym\bigl(\clE_{V,c}(U)^{\vee}[1]\bigr), \quad
 d^{\tcl} \ceq \{S_V,-\}_{\omega_V},
\end{align*}
where $\clE_{V,c}(U)$ denotes compactly supported fields on $U$, 
and $\Sym$ is the completed symmetric algebra. 
Exactly as in \cref{ss:2.5} \eqref{eq:2:Q=pdoz}, 
the differential is induced by the Dolbeault operator:
\[
 d^{\tcl} = \bpd.
\]

The local observables form 
an $N=1$ SUSY Poisson factorization algebra on $\sSig$:
\begin{align}\label{eq:3:lsFA}
 U \lmto \cObs{V}(U).
\end{align}
The factorization product is defined by extension by zero and multiplication
in the symmetric algebra, just as in \eqref{eq:2:cfp} of \cref{ss:2.6}. 
The Poisson bracket is induced by the BV pairing \eqref{eq:3:omega}. 
In terms of linear observables $C^i,B_i$ defined as in \cref{cor:2:bcbg}, 
we have the classical brackets
\begin{align*}
 \{C^i(Z_1),B_j(Z_2)\} = \delta^i_j\,\delta(Z_1-Z_2), \quad
 \{C^i(Z_1),C^j(Z_2)\} = 0, \quad
 \{B_i(Z_1),B_j(Z_2)\} = 0.
\end{align*}

\begin{prp}\label[prp]{prp:3:scObs}
For every holomorphic coordinate chart $V\subset X$, 
the assignment \eqref{eq:3:lsFA} defines 
an $N=1$ SUSY Poisson factorization algebra on $\sSig$. 
Moreover, the transition maps \eqref{eq:3:fiso} 
induce isomorphisms of Poisson factorization algebras on overlaps:
\[
 \varphi^{\tcl}_{\beta\alpha}:
 \Obs^{\tcl}_{V_\alpha}|_{V_{\alpha\beta}}
 \stackrel{\sim}{\longrightarrow}
 \Obs^{\tcl}_{V_\beta}|_{V_{\alpha\beta}}.
\]
Hence the classical observables glue to 
a sheaf of $N=1$ SUSY Poisson factorization algebras on $X$,
which is denoted by 
\[
 \scObs{X}.
\]
\end{prp}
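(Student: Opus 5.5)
The plan has three parts, taken in the order of the statement.

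\emph{First part: each chart gives a SUSY Poisson factorization algebra.} For a fixed chart $V$, the fields $\clE_V$, the differential $Q=\bpd$, the pairing $\omega_V$ and the action $S_V$ are literally those of the linear-target theory of \cref{s:2}, with $V=\bbC^n$. So $U\mto\cObs{V}(U)$ is the commutative SUSY factorization algebra $\cObs{}$ of \cref{ss:2.6}, and its Poisson bracket is the one in \eqref{eq:1:P0}. I will recheck the two conditions of \cref{dfn:1:SPPFA}. Condition \ref{i:dfn:1:SPPFA:obj} holds because $\{-,-\}$ comes from the constant pairing $\omega_V$, hence is a biderivation of $\Sym$ satisfying Jacobi. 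It is compatible with $d^{\tcl}=\{S_V,-\}$ since $\{S_V,S_V\}=0$, which is immediate because $S_V$ is quadratic and $\bpd^2=0$. Condition \ref{i:dfn:1:SPPFA:mor} holds because extension by zero $\clE_{V,c}(U)\inj\clE_{V,c}(U')$ preserves the integral pairing $\int_U dz\,d\ol z\,d\zeta$. Weiss descent follows as in the non-SUSY case of \cite{CG1,CG2}: $\Sym$ of the dual of a compactly supported cosheaf is Weiss-cosheaf-like, and a Weiss cover in $\Disk^{\SUSY}$ is defined purely on reduced spaces, where the fields are $\Omega^{0,*}\otimes\Lambda(\zeta)$ with $\Lambda(\zeta)$ simply a finite-rank coefficient space.

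\emph{Second part: the transition maps \eqref{eq:3:fiso} give isomorphisms on overlaps.} By \cref{prp:3:cgl}, $\varphi^{\tcl}_{\beta\alpha}$ preserves $S_V$ and $\omega_V$. The substitution \eqref{eq:3:sfc} is nonlinear in $\Phi_\alpha$, so I will regard it as a formal automorphism of the (completed) field space, that is, a map of functions on fields. I pull observables back, $F\mto F\circ\varphi_{\beta\alpha}$, on the completed symmetric algebras. I will check the following in turn.
\begin{clist}
\item The pullback is an algebra map, since it is composition with a map of spaces.
\item It intertwines the differentials, because $d^{\tcl}=\{S,-\}$ and $S_\beta\circ\varphi=S_\alpha$.
\item It preserves brackets, because $\varphi^*\omega_\beta=\omega_\alpha$, the symplectomorphism property computed just before \cref{prp:3:cgl}.
\item It commutes with extension maps and factorization products, because \eqref{eq:3:sfc} is local: it is pointwise in $Z$ and does not enlarge supports.
\end{clist}
The inverse is $\varphi_{\alpha\beta}$, and the cocycle identity $\varphi_{\gamma\beta}\varphi_{\beta\alpha}=\varphi_{\gamma\alpha}$ follows from the chain rule for $f_{\gamma\alpha}=f_{\gamma\beta}\circ f_{\beta\alpha}$. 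Applied to the $\Pi$-transformation this gives the product of Jacobians. The sheaf $\scObs{X}$ is then obtained by standard gluing of a sheaf of objects along an atlas with cocycle isomorphisms: define it on $V\subset X$ as the equalizer over charts.

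\emph{Main obstacle.} The hardest step is making ``$\rst{\cObs{V_\alpha}}{V_{\alpha\beta}}$'' precise. The substitution $f(\Phi_\alpha)$ only makes sense when the field $\gamma_\alpha$ takes values in $V_{\alpha\beta}$, but linear observables see the whole vector space $\bbC^n$. My first attempt will be to work formally around constant maps. I will replace $\cObs{V}$ by its family over $V$: the observables at a base point $x\in V$ are $\Sym$ of the dual of fluctuations, so they form an $\shO_V$-linear (or jet-valued) object. Over this family the Taylor expansion of $f_{\beta\alpha}$ at $x$ makes the pullback well defined in the completed $\Sym$, and every sum is finite in each polynomial degree after completion. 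With this formal-neighbourhood interpretation, the checks above apply fiberwise and glue over $X$.
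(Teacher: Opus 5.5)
Your proposal is correct and follows the same route as the paper's proof. On each chart the assignment is literally the linear-target algebra $\cObs{}$ of \cref{s:2}, and on overlaps the transition maps \eqref{eq:3:fiso} preserve $S_V$ and $\omega_V$ (\cref{prp:3:cgl}), hence also $d^{\tcl}=\{S_V,-\}$ and the bracket, so the chart-wise algebras glue. The paper stops at that point, so your extra checks only make explicit what the paper leaves implicit: locality giving compatibility with extension maps and products, the chain-rule cocycle identity, and especially the formal-neighbourhood family over the target needed to make $\rst{\cObs{V_\alpha}}{V_{\alpha\beta}}$ meaningful at all.
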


\begin{proof}
On each chart this is exactly the construction of 
\S\S \ref{ss:2.1}--\ref{ss:2.8}, with the vector space target $\bbC^n$ 
replacing the tangent space in coordinates.
The compatibility with coordinate changes follows from \cref{prp:3:cgl},
since the transition maps preserve the BV action and the BV pairing, 
hence also the induced differential and Poisson bracket.
\end{proof}

Applying \cref{thm:1:SPVA} chart-wise, we obtain a sheaf $\clV^{\tcl}_X$
of local $N_K=1$ SUSY Poisson vertex algebras on $X$. 
Concretely, for each chart $(V;x^i_\alpha)$, letting
\[
 \clV^{\tcl}_X(V) \ceq \bbV\bigl(\cObs{V}\bigr) 
                  = H^{\bl}\bigl(\cObs{V}(\sD_1)\bigr),
\]
we obtain a commutative $N_K=1$ SUSY vertex algebra 
with generators $C^i,B_i$ and $\Lambda$-brackets
\begin{align}\label{eq:3.3-classical-lambda}
 [C^i{}_{\Lambda}B_j] = \delta^i_j, \quad
 [C^i{}_{\Lambda}C^j] = 0, \quad
 [B_i{}_{\Lambda}B_j] = 0.
\end{align}
These local SUSY vertex algebras glue classically without obstruction.

\subsection{Quantization, descent, and the anomaly class}
\label{ss:3.4}

We now pass to the quantum theory. 
On a coordinate chart $V \subset X$, 
nothing changes analytically from \S\S\ref{ss:2.9}--\ref{ss:2.10}: 
the theory is still free in the chosen coordinates, 
so we may use the same gauge-fixing $Q^{\GF}$, heat kernel $K_L$, 
propagator $P_{\ep<L}$, and BV Laplacian $\Delta_L$ for $0<\ep<L$. 
Thus, for $L>0$, we obtain a local BD algebra of quantum observables
\begin{align}\label{eq:3.4-quantum-local}
 \qObs{V,L}(U) = \Bigl( \Sym\bigl(\clE_{V,c}(U)^\vee[1]\bigr)\dbr{\hbar}, \; 
                        d^{\tcl}+\hbar\Delta_L \Bigr),
\end{align}
whose factorization product is defined 
by contraction with the regularized propagator. 
Then, we can apply \cref{thm:1:533}, and the cohomology
\begin{align}\label{eq:3:clVqV}
 \clV^q(V) \ceq \bbV\bigl(\qObs{V}\bigr) = H^{\bl}\bigl(\qObs{V}(\sD_1)\bigr)
\end{align}
is the $N_K=1$ SUSY vertex algebra generated by 
the superfields $C^i$ and $B_i$ with OPE
\begin{align}\label{eq:3:OPE}
 C^i(Z_1)\,B_j(Z_2) \sim \hbar \frac{\delta^i_j}{z_{12}}, \quad
 C^i(Z_1)\,C^j(Z_2) \sim 0, \quad
 B_i(Z_1)\,B_j(Z_2) \sim 0.
\end{align}
Thus $\clV^{\q}(V)$ is the $N_K=1$ SUSY vertex algebra of 
the $bc$-$\beta\gamma$ system. 

The essential issue is whether the chart-wise quantum theories glue on overlaps. 
At the classical level, the transition maps were strict; 
quantum mechanically, normal ordering introduces a possible central defect 
on double and triple overlaps. 
More precisely, one obtains on an overlap $V_{\alpha\beta}=V_\alpha \cap V_\beta$ 
a local isomorphism
\begin{align}\label{eq:3:varphi}
 \varphi^{\q}_{\beta\alpha}\colon 
 \rst{\qObs{V_\alpha}}{    V_{\alpha\beta}} \lsto 
      \qObs{V_\beta}\big|_{V_{\alpha\beta}},
\end{align}
whose effect on the generators is still given by the superfield formula
\begin{align}\label{eq:3:qch}
 \Phi^i_\beta  = f^i_{\beta\alpha}(\Phi_\alpha), \quad
 \Pi_{\beta,i} = 
 \frac{\pd x^j_\alpha}{\pd x^i_\beta}(\Phi_\alpha)\,\Pi_{\alpha,j},
\end{align}
but now interpreted inside the completed quantum factorization algebra 
(or equivalently, inside the vertex algebra of local observables 
 with normal ordering understood). 
On triple overlaps $V_{\alpha\beta\gamma}$ these isomorphisms need not satisfy
the cocycle condition strictly; the defect is measured by 
a \v{C}ech $2$-cocycle with values in closed holomorphic $2$-forms.

\begin{thm}\label{thm:3:obs}
Let $X$ be a complex manifold. 
The chart-wise quantum $N=1$ SUSY factorization algebras 
$\{\qObs{V_\alpha}\}_\alpha$ glue canonically on overlaps 
via the superfield coordinate-change formula \eqref{eq:3:qch}
to a sheaf $\sqObs{X}$ of $N=1$ SUSY factorization algebras on $X$.
Applying the local extraction functor of \cref{thm:1:533} produces a sheaf
\[
 \clV^{\q}_X \ceq H^{\bl}\bigl(\sqObs{X}(\sD_1)\bigr)
\]
of $N_K=1$ SUSY vertex algebras on $X$, canonically identified with 
the chiral de Rham complex $\CdR_X$ of $X$ \cite{MSV,GMS3}.
\end{thm}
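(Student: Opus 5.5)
The proof will proceed chart-wise first and then glue. It reduces everything to the free theory of \cref{s:2} and to the vertex-algebraic gluing of Ben-Zvi--Heluani--Szczesny.

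\textbf{Step 1: the local quantum isomorphisms.} On each chart $V_\alpha$, \cref{prp:2:qObsFA} and \cref{thm:2:qObsVA} together with \cref{cor:2:bcbg} identify $\clV^{\q}(V_\alpha)$ with the $N_K=1$ SUSY $bc$-$\beta\gamma$ vertex algebra. Its generators are $C^i_\alpha,B_{\alpha,i}$ with OPE \eqref{eq:3:OPE}, and the structure varies over $V_\alpha$ through the holomorphic functions of $\gamma_\alpha$. I will make sense of the transition maps \eqref{eq:3:varphi} as follows. First define the right-hand sides of \eqref{eq:3:qch} as normally ordered superfields in $\clV^{\q}(V_\alpha)$, using the normally ordered product of \cite[Definition 3.2.11]{HK}. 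Then check, by SUSY Wick calculus, that the new fields $C^i_\beta \ceq f^i_{\beta\alpha}(C_\alpha)$ and $B_{\beta,i}\ceq \nod{(\pd x^j_\alpha/\pd x^i_\beta)(C_\alpha)\,B_{\alpha,j}}$ again satisfy \eqref{eq:3:OPE}. This is exactly the computation of \cite[\S4]{BHS}. Because the superfield transformation is tensorial (\cref{prp:3:cgl}), the only possible correction to the $\Lambda$-brackets is of order $\hbar^2$ or comes from double contractions. For the pairing $[B_\beta{}_\Lambda B_\beta]$ these double contractions cancel because the Jacobian terms are symmetric in the SUSY setting, which is the key advantage of $N=1$ superfields. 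By the universal property of the free SUSY vertex algebra, we obtain a morphism $\clV^{\q}(V_\alpha)|_{V_{\alpha\beta}}\to\clV^{\q}(V_\beta)|_{V_{\alpha\beta}}$. It is invertible by applying the same construction to $f_{\alpha\beta}$.

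\textbf{Step 2: lift to factorization algebras.} I will lift these morphisms to $\qObs{}$ itself. The classical maps $\varphi^{\tcl}_{\beta\alpha}$ of \cref{prp:3:scObs} are strict, and the quantum correction is determined by the propagator $P_{\ep<L}$ contracted inside each disk. I therefore define $\varphi^{\q}_{\beta\alpha}$ as $\varphi^{\tcl}_{\beta\alpha}$ conjugated by $\exp(\hbar\,\pd_{P})$, as in the definition \eqref{eq:2:qfm}. Compatibility with factorization products then follows formally, and compatibility with $d_{\q,L}$ reduces to the vanishing of an anomaly term. That anomaly term is computed by the one-loop contraction.

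\textbf{Step 3: cocycle condition (main obstacle).} On triple overlaps, the composite $\varphi^{\q}_{\alpha\gamma}\varphi^{\q}_{\gamma\beta}\varphi^{\q}_{\beta\alpha}$ differs from the identity by an automorphism of the form $B_i\mapsto B_i + \hbar\,(\text{closed $2$-form contraction})$. This is the \v{C}ech $2$-cocycle in closed holomorphic $2$-forms mentioned before the statement. I will first try to show, following \cite{BHS}, that in the $N=1$ superfield formalism this defect is a coboundary. In the component (non-SUSY) picture of \cite{MSV}, the obstruction $\mathrm{ch}_2$ contributions from the $\beta\gamma$ and $bc$ parts cancel against each other. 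In superfield language this cancellation is the statement that the double contraction of the supertrace $\mathrm{str}$ of the Jacobian vanishes. This canonicity is what I expect to be the delicate point. If strict cancellation fails, I will modify $\varphi^{\q}$ by explicit correction terms built from $\pd^2 f$, which trivializes the cocycle.

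\textbf{Step 4: sheaf structure and identification.} The sheaf $\sqObs{X}$ is the descent of the glued local factorization algebras. Applying the extraction functor of \cref{thm:1:533}, which is functorial, yields a sheaf $\clV^{\q}_X$. Its local sections are the SUSY $bc$-$\beta\gamma$ system with the transition rules \eqref{eq:3:qch}. By \cite[\S4]{BHS}, these coincide in components with the MSV gluing formulas. Hence $\clV^{\q}_X\cong\CdR_X$, after setting $\hbar=1$ or working over $\bbC\dbr{\hbar}$.
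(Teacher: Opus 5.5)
Steps 1 and 4 are in line with the paper, but Step 3, which carries the real content of the theorem, is left open, and neither branch of your plan proves the statement. You posit a nonzero triple-overlap defect $B_i\mapsto B_i+\hbar(\cdots)$. You then propose either to show it is a coboundary or to modify $\varphi^{\q}$ by correction terms built from $\pd^2 f$. Neither works. Modified transition maps are no longer given by \eqref{eq:3:qch}. Trivializations of a \v{C}ech cocycle of closed $2$-forms are unique only up to \v{C}ech $1$-cocycles of closed $2$-forms, so the resulting sheaf would depend on choices rather than being the canonical gluing the theorem claims. And if the defect were cohomologically nontrivial, as for the purely bosonic curved $\beta\gamma$-system of \cref{rmk:3:BHS}, no local correction would remove it. What you need is that the defect vanishes identically. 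The paper obtains this from the chain rule applied to the superfield formula. One has $f_{\gamma\beta}\circ f_{\beta\alpha}=f_{\gamma\alpha}$, and, with $\Phi_\beta=f_{\beta\alpha}(\Phi_\alpha)$, $J_{f_{\gamma\alpha}}(\Phi_\alpha)=J_{f_{\gamma\beta}}(\Phi_\beta)\,J_{f_{\beta\alpha}}(\Phi_\alpha)$. Hence $\varphi^{\q}_{\gamma\beta}\circ\varphi^{\q}_{\beta\alpha}=\varphi^{\q}_{\gamma\alpha}$ on the nose.

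If your worry is that normal ordering spoils the chain rule, the point to check is $\nod{F(C)\,\nod{G(C)\,B_k}}=\nod{\nod{F(C)\,G(C)}\,B_k}$ for holomorphic $F,G$. As states, the SUSY normally ordered product is the ordinary $(-1)$-product. The $\zeta$-independent components of $C^i=B^i_{\BHS}$ and $B_k=\Psi^k_{\BHS}$ are the boson $b^i_{\MSV}$ and the fermion $\psi^k_{\MSV}$, whose OPE is regular. So the ordinary quasi-associativity correction terms, which involve $G_{(j)}\psi^k_{\MSV}$ and $F_{(j)}\psi^k_{\MSV}$ for $j\ge 0$, vanish. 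Applying the odd derivation $S$ then gives the full superfield identity. In components this is exactly the cancellation between the $\beta\gamma$- and $bc$-corrections that you invoke. It yields vanishing on the nose, not merely a coboundary. Since the transition maps are morphisms of SUSY vertex algebras commuting with $S$, agreement on functions of $C$ and on the $B_k$ gives the cocycle identity. Step 2 needs the same cancellation for your $\exp(\hbar\,\pd_P)$-conjugated maps to intertwine $d_{\q,L}$, and there too you name the anomaly but do not compute it. The paper never passes through that construction; it works with \eqref{eq:3:qch} directly.
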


\begin{proof}
First, we verify the cocycle condition on triple overlaps.
Consider the transition maps $\varphi^{\q}_{\beta\alpha}$ \eqref{eq:3:varphi}
on triple overlaps 
$V_{\alpha\beta\gamma} = V_{\alpha}\cap V_{\beta}\cap V_{\gamma}$.
Let $x_\beta = f_{\beta\alpha}(x_\alpha)$ and 
$x_\gamma = f_{\gamma\beta}(x_\beta)$, then
$\Phi_\gamma=f_{\gamma\beta}(f_{\beta\alpha}(\Phi_\alpha)) 
=f_{\gamma\alpha}(\Phi_\alpha)$,
while for the conjugate field one has
$\Pi_\gamma
= J_{f_{ \gamma\beta}}(\Phi_\beta )^{-1} 
  J_{f_{ \beta\alpha}}(\Phi_\alpha)^{-1} \Pi_\alpha
= J_{f_{\gamma\alpha}}(\Phi_\alpha)^{-1} \Pi_\alpha$,
where $J_f$ denotes the Jacobian matrix of $f$.  
Hence 
$\varphi^{\q}_{\gamma\beta} \circ \varphi^{\q}_{\beta\alpha} 
=\varphi^q_{\gamma\alpha}$ strictly on $V_{\alpha\beta\gamma}$. 
Therefore the local quantum factorization algebras glue to 
a global sheaf $\sqObs{X}$ on $X$.

Let us turn to the identification with the chiral de Rham complex.
By \eqref{eq:3:clVqV}, on each coordinate chart $V \subset X$, 
the SUSY vertex algebra $\clV^{\q}_X(V)$ is the $bc$--$\beta\gamma$ system
generated by superfields $C^i$ and $B_i$ with OPE \eqref{eq:3:OPE}. 
This is exactly the local free-field presentation 
underlying the chiral de Rham complex in \cite{MSV,BHS}. 
Moreover, the local superfields transform \eqref{eq:3:qch} descends to 
\begin{align*}
 C^i_\beta = f^i_{\beta\alpha}(C_\alpha), \quad
 B_{\beta,i} = \frac{\pd x^j_\alpha}{\pd x^i_\beta}(C_\alpha)\,B_{\alpha,j},
\end{align*}
which coincides with the superfield gluing law of Ben-Zvi--Heluani--Szczesny 
\cite[(4.2.2)]{BHS} for the local generators of $\CdR_X$. 
Therefore the descent data defining $\clV^{\q}_X$ agree with 
the descent data defining $\CdR_X$, 
and hence there is a canonical isomorphism of sheaves of $N_K=1$ SUSY vertex algebras
\[
 \clV^{\q}_X \cong \CdR_X.
\]
This proves the theorem.
\end{proof}

\begin{rmk}\label{rmk:3:BHS}
The significance of \cref{thm:3:obs} is that the chiral de Rham complex 
in \cite{MSV,GMS3,BHS} is recovered not merely as a sheaf obtained 
by gluing local vertex algebras, but as the vertex-algebra shadow of a 
\emph{sheaf of $N=1$ SUSY factorization algebras} arising from BV quantization. 
In this sense, the present construction lifts the $N=1$ superfield description 
of \cite{BHS} to the factorization-algebra setting.

If one forgets the fermionic $bc$-part and 
considers only the bosonic curved $\beta\gamma$-system \cite{GGW,Ne,W}, 
then the anomaly class $\mathrm{ch}_2(T_X)$ appearing in 
chiral differential operators (CDO) \cite{GMS1,GMS2}
(equivalently the first Pontryagin class in \cite{B}) appears, 
and one gets the familiar gerbe of chiral differential operators 
unless a trivialization is chosen.
\end{rmk}

\section{Ricci-flat K\"ahler and hyperk\"ahler targets}
\label{s:4}

In this section, we extend the arguments of \S\S\ref{s:2}--\ref{s:3} 
from the case of a general complex target to the case 
where the target carries additional geometric data.
Our aim is to lift the $N_K=2$ and $N_K=4$ supersymmetry of
Ben-Zvi--Heluani--Szczesny from the level of SUSY vertex algebras 
to the level of SUSY factorization algebras. 
More precisely, we show that, whenever $X$ carries a Ricci-flat K\"ahler 
metric or a hyperk\"ahler metric, the local quantum factorization algebra
constructed in \cref{s:3} admits canonical local currents
whose extracted fields reproduce the superconformal generators in \cite{BHS}.

Throughout this section, $X$ is a complex manifold of complex dimension $n$,
and we continue to work locally on the source $\sSig = (\sC,\clD_{\std})$ 
with $\clD_{\std}$ generated by $D_{\std}=\pd_\zeta+\zeta\pd_z$.
We denote by $z_{12} \ceq z_1-z_2-\zeta_1\zeta_2$
the $N_K=1$ super-distance from \cref{eq:1:Z12}. 
For a holomorphic coordinate chart $V \subset X$, we denote by
\begin{align}\label{eq:4:bbVqV}
 \bbV(\qObs{V}) \ceq H^{\bl}\!\bigl(\qObs{V}(\sD_1)\bigr)
\end{align}
the extraction of the local quantum factorization algebra.

\subsection{Local free fields in BHS notation}\label{ss:4.1}

Let $V \subset X$ be a holomorphic coordinate chart 
with holomorphic coordinates $(x^1,\dotsc,x^n)$. 
By \cref{thm:3:obs}, on $V$ the quantum factorization algebra
is locally identified with the free theory of \cref{s:2}. 
In the notation of \cref{cor:2:bcbg}, 
the basic superfields were denoted by $C^i(Z)$ and $B_i(Z)$,
which correspond to the superfields in \cite{BHS} as
$B^i_{\BHS}(Z)=C^i(Z)$ and $\Psi^{\BHS}_i(Z)=B_i(Z)$.
In this section, we also need the anti-holomorphic side, 
which are denoted by 
\[
 \ol{C}^{\ol{i}}(Z),\quad \ol{B}_{\ol{i}}(Z).
\]
Then the only nontrivial local OPEs are
\begin{equation}\label{eq:4:freeOPE}
 C^i(Z_1)B_j(Z_2) \sim \frac{\hbar \delta^i_j}{z_{12}}, \quad
 \ol{C}^{\ol{i}}(Z_1)\ol{B}_{\ol{j}}(Z_2) \sim
 \frac{\hbar \delta^{\ol{i}}_{\ol{j}}}{z_{12}},
\end{equation}
with all other pairings regular.

In terms of the factorization algebras $\qObs{V}$, 
the fields $C^i,B_i,\ol{C}^{\ol{i}},\ol{B}_{\ol{i}}$ are 
the extracted superfields corresponding to linear observables. 
Their classical limits belong to the local Poisson factorization algebra
$\cObs{V}$ from \cref{s:3}.

\subsection{The $N=1$ super-current and the BHS complex-structure current}
\label{ss:4.2}

Let us introduce the basic geometric input 
in the construction of superconformal currents in \cite{BHS}.

Let $J_0$ denote the complex structure of the complex manifold $X$.
Suppose that $X$ is endowed with a Hermitian metric $g$, 
i.e.\ a Riemannian metric on the underlying smooth manifold such that
$g(J_0u,J_0v)=g(u,v)$ for ($u,v \in T_X$).
In local holomorphic coordinates $(x^1,\dots,x^n)$, the metric is written as
\begin{align*}
 g = g_{i\ol{j}}\,dx^i\otimes d\ol{x}^{\ol{j}} +
     g_{\ol{j}i}\,d\ol{x}^{\ol{j}}\otimes dx^i, \quad
 g_{\ol{j}i} = \ol{g_{i\ol{j}}}.
\end{align*}
Equivalently, the associated real $(1,1)$-form is
\[
 \omega \ceq \sqrt{-1}\,g_{i\ol{j}}\,dx^i\wedge d\ol{x}^{\ol{j}}.
\]
When $\omega$ is closed, $(X,g,J_0)$ is K\"ahler. 
In that case, the Levi--Civita connection $\nabla$ of 
the underlying Riemannian metric preserves the complex structure, 
\[
 \nabla J_0 = 0,
\]
and hence preserves the holomorphic and anti-holomorphic splitting 
of the complexified tangent bundle. 

Let $\nabla$ be the Levi--Civita connection of $g$. 
By definition, $\nabla$ is the unique torsion-free connection satisfying $\nabla g=0$. 
In local holomorphic coordinates on a K\"ahler chart, 
the only nonzero Christoffel symbols are
\begin{align*}
 \Gamma^{i}_{jk} = g^{i\ol{\ell}} \pd_j g_{k\ol{\ell}}, \quad
 \Gamma^{\ol{i}}_{\ol{j}\,\ol{k}} = g^{\ell\ol{i}} \pd_{\ol{j}} g_{\ell\ol{k}},
\end{align*}
while the mixed symbols vanish. 
In particular, at every point 
one may choose holomorphic normal coordinates so that
\begin{align*}
 g_{i\ol{j}}=\delta_{i\ol{j}}, \quad
 \Gamma^i_{jk}=0, \quad
 \Gamma^{\ol{i}}_{\ol{j}\,\ol{k}}=0
\end{align*}
at that point. The OPE computations below are performed in such coordinates, and
the resulting formulas glue because they are expressed invariantly in terms of
$g$ and $\nabla$. 

Now, let $V \subset X$ be a holomorphic coordinate chart. 
As in \cref{s:2} and \cref{s:3}, 
let $\Phi=(\Phi^a)_{a=1}^n$ and $\Pi=(\Pi_a)_{a=1}^n$ denote 
the local superfields of the quantum factorization algebra $\qObs{V}$, 
where $\Phi^a$ are coordinate superfields and 
$\Pi_a$ are their conjugate superfields. 
Thus the distinguished currents must be defined already 
inside $\qObs{V}$ in terms of $\Phi$ and $\Pi$, 
and only after applying the local extraction functor $\bbV$ 
do they become superfields in the associated SUSY vertex algebra
(see \cref{thm:2:qObsVA,thm:3:obs}). 

Let $D=\pd_\zeta+\zeta\pd_z$ be the odd translation operator on the source. 
Then the local $N=1$ super-current is
\begin{align}\label{eq:4.2}
 \clH_V(z,\ol{z},\zeta) \ceq \sum_a \nod{\Pi_a\,S\Phi^a}(z,\ol{z},\zeta).
\end{align}
In local holomorphic and anti-holomorphic coordinates
$(x^i,\ol{x}^{\ol{i}})$, this may be written as
\[
 \clH_V(z,\ol{z},\zeta) = \sum_{i=1}^n \nod{\Pi_i\,(D\Phi^i)}(z,\ol{z},\zeta)
     + \sum_{\ol{i}=1}^n \nod{\Pi_{\ol{i}}\,(D\Phi^{\ol{i}})}(z,\ol{z},\zeta).
\]

Now, let $I\in \Gamma(X,\End_{\bbR}(T_X))$ be 
a parallel endomorphism with $\nabla I=0$. 
Following \cite[Lemma 7.2]{BHS}, 
let us consider the following element in $\qObs{V}$:
\begin{align}\label{eq:4.3}
 \clJ_{I,V}(z,\ol{z},\zeta) \ceq 
  \nod{I^a{}_b(\Phi)\,(D\Phi^b)\,\Pi_a}(z,\ol{z},\zeta),
\end{align}
where we used the notational convention 
explained in \cite[(7.1.1)--(7.1.3)]{BHS}.
Since $\nabla I=0$, no additional connection term appears in these coordinates.
If $I=J_0$ is the complex structure of a K\"ahler manifold, 
then in holomorphic coordinates we have 
\[
 J_0(\pd_{x^i})             =  \sqrt{-1}\,\pd_{x^i},\qquad
 J_0(\pd_{\ol{x}^{\ol{i}}}) = -\sqrt{-1}\,\pd_{\ol{x}^{\ol{i}}},
\]
hence
\begin{align}\label{eq:4.4}
 \clJ_V(z,\ol{z},\zeta) = 
  \sqrt{-1} \sum_{i=1}^n \nod{\Pi_i\,(D\Phi^i)}(z,\ol{z},\zeta) -
  \sqrt{-1} \sum_{\ol{i}=1}^n 
            \nod{\Pi_{\ol{i}}\,(D\Phi^{\ol{i}})}(z,\ol{z},\zeta).
\end{align}

\begin{lem}
Let $(X,g,J_0)$ be K\"ahler, 
and let $\nabla$ be the Levi--Civita connection of $g$. 
Then, on each holomorphic chart $V \subset X$, the local observables 
$\clH_V$ and $\clJ_V$ defined by \cref{eq:4.2,eq:4.4} belong to $\qObs{V}$. 
\end{lem}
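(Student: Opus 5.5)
The plan is to show, chart by chart, that $\clH_V$ and $\clJ_V$ are cocycles in the local quantum observables $\qObs{V}(U)$ on small SUSY disks $U$, and that these local elements are independent of the choice of K\"ahler chart, so they define sections of $\qObs{V}$. First, interpret the normally ordered products in \eqref{eq:4.2} and \eqref{eq:4.4} at the cochain level. For a fixed point $Z_0$ and a small SUSY disk $U=\sD_r(Z_0)$, take a smeared version: pair the composite local functional $\Pi_a\,D\Phi^a$ against a compactly supported test density on $U$. This gives an element of $\Sym(\clE_{V,c}(U)^\vee[1])\dbr{\hbar}$ that is quadratic in the fields. Normal ordering is implemented by subtracting the scale-$L$ self-contraction. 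Concretely, we take the renormalized element $\lim_{\ep\to0}\exp(-\hbar\,\pd_{P_{\ep<L}})$ applied to the naive quadratic functional. This limit exists because $D$ acts on the coefficients and commutes with the Dolbeault operators, so the self-contraction of $\Pi_a$ with $D\Phi^a$ is just $D$ applied to the kernel $P_{\ep<L}$ restricted to the diagonal. That kernel is smooth for $\ep>0$, and its $\ep\to0$ limit is well defined after the subtraction.

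The second step is closedness under $d_{\q,L}=d^{\tcl}+\hbar\Delta_L$. Classically, $d^{\tcl}=\bpd$, and $\bpd$ commutes with $D$ by \cref{ss:2.7}. On holomorphic (on-shell) configurations the functional $\Pi_a D\Phi^a$ is therefore $\bpd$-closed up to a total $\bpd$-derivative of the test density. Hence, after passing to holomorphic test data (or to cohomology, using disk independence \ref{i:1:di}), it defines a class. For the quantum correction, $\hbar\Delta_L$ acting on a quadratic functional produces a constant, namely the pairing with $K_L$. This constant is exactly cancelled by the $L$-dependence of the normal-ordering subtraction, using the homotopy identity \eqref{eq:2:hot}: $\pd_L P_{\ep<L}$ differs from $-K_L$ by a $Q$-exact term. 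This is the usual argument that the renormalized quadratic observables form a compatible family across scales.

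The third step handles the curved case and is where $\nabla J_0=0$ enters. Write $\clJ_V$ in the general form \eqref{eq:4.3} with $I=J_0$. The coefficient $I^a{}_b(\Phi)$ is a function of $\Phi$, so the observable is no longer quadratic. However, in holomorphic coordinates $J_0$ is constant ($\pm\sqrt{-1}$ on the two blocks), which reduces the observable to \eqref{eq:4.4}. That expression is a linear combination of quadratic currents of the same type as $\clH_V$, so the previous step applies verbatim.

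It remains to check that the resulting elements are compatible with the transition maps $\varphi^{\q}_{\beta\alpha}$ of \eqref{eq:3:varphi}. Under \eqref{eq:3:qch}, $D\Phi^i_\beta=\frac{\pd f^i_{\beta\alpha}}{\pd x^j_\alpha}(\Phi_\alpha)\,D\Phi^j_\alpha$. Together with the Jacobian law for $\Pi$, this shows $\Pi_{\beta,i}D\Phi^i_\beta=\Pi_{\alpha,j}D\Phi^j_\alpha$ classically, just as in the computation preceding \cref{prp:3:cgl}. I expect the main obstacle to be the quantum normal-ordering correction on overlaps. Reordering $\Pi$ past the Jacobian factor produces a term $\hbar\,\pd_k\bigl(\frac{\pd x^j_\alpha}{\pd x^i_\beta}\bigr)\frac{\pd x^i_\beta}{\pd x^j_\alpha}\,D\Phi^k$, which is $\hbar\, D\log\det J$. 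This is a total $D$-derivative, hence $S$-exact, but it is not zero.

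To handle this, I would first use the K\"ahler hypothesis to work in holomorphic normal coordinates at each point, where the Christoffel symbols and first derivatives of the Jacobian vanish, so the correction vanishes pointwise. Since the correction term is tensorial (it is built from $\Gamma^i_{ik}$), it then vanishes identically. Failing that, I would absorb it into a redefinition of $\clJ_V$ by $\hbar\,D\log\det g$, which still lies in $\qObs{V}$. Either way, $\clH_V$ and $\clJ_V$ are honest elements of $\qObs{V}$.
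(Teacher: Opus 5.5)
Your first and third steps already constitute the paper's proof. The paper simply observes that \eqref{eq:4.2} and \eqref{eq:4.4} are Wick-ordered local functionals of the basic superfields $\Phi,\Pi$, hence elements of $\qObs{V}$. It reduces \eqref{eq:4.3} to the quadratic expression \eqref{eq:4.4} exactly as you do, because $J_0$ is the constant $\pm\sqrt{-1}$ in holomorphic coordinates. The lemma is only a chart-wise membership statement. It does not claim that $\clH_V,\clJ_V$ are cocycles, nor that they are compatible with the transition maps $\varphi^{\q}_{\beta\alpha}$; the latter is \cref{thm:4:N=2}(1), and chart-independence is not what ``belongs to $\qObs{V}$'' means. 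Your second and fourth steps therefore prove more than is asked, and they contain real errors.

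\textbf{Overlap step.} The correction $\hbar\,D\bigl[\log\det(\pd x_\alpha/\pd x_\beta)(\Phi)\bigr]$ that you correctly identify depends only on the transition function $f_{\beta\alpha}$, not on $g$. It is exactly the inhomogeneous term in the transformation law of the trace $\Gamma^i_{ik}$, i.e.\ the part that makes Christoffel symbols non-tensorial. Normal coordinates cannot remove it: the transition maps of the atlas are given to you, and a non-tensorial quantity that vanishes at one point in adapted coordinates need not vanish elsewhere. Here it is nonzero whenever $\det(\pd f_{\beta\alpha}/\pd x_\alpha)$ is non-constant; this is the familiar coordinate-change anomaly of the fermion-number current of the chiral de Rham complex.

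\textbf{Fallback.} Redefining by $\hbar D\log\det g$ does not rescue $\clJ_V$. The function $\log\det(g_{i\ol{j}})$ shifts by $\pm\log\abs{\det(\pd x_\beta/\pd x_\alpha)}^2$. That could absorb the anomaly of $\clH_V$, where the holomorphic and anti-holomorphic sectors enter with the same sign. It cannot absorb that of $\clJ_V$, where the two sectors enter with $\pm\sqrt{-1}$, so the anomaly is proportional to $D\arg\det(\pd x_\beta/\pd x_\alpha)$.

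\textbf{Closedness step.} This is also not a valid argument. A cocycle condition concerns the functional on all fields, so it cannot be checked on on-shell configurations, and you cannot ``pass to cohomology'' before you know you have a cocycle. The quantum half misquotes \eqref{eq:2:hot}, which says $[Q,P_{\ep<L}]=K_\ep-K_L$, not that $\pd_L P_{\ep<L}$ equals $-K_L$ up to exact terms. If you want closed smeared currents, pair with compactly supported test forms of type $(0,1)$. Then $\bpd f=0$, and integration by parts together with $[\bpd,D]=0$ gives $d^{\tcl}$-closedness.

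For the lemma itself, simply drop steps two and four.
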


\begin{proof}
The expressions \cref{eq:4.2,eq:4.4} are Wick-ordered local functionals 
of the basic superfields $\Phi,\Pi$ in $\qObs{V}$, 
hence define local observables.
\end{proof}

Next, we study the OPEs of 
$\clH_V(z,\ol{z},\zeta)$ and $\clJ_V(z,\ol{z},\zeta)$. 
Let
\[
 \rho_g = \sqrt{-1}\,\Ric_{i\ol{j}}\,dx^i\wedge d\ol{x}^{\ol{j}}, \quad
 \Ric_{i\ol{j}} \ceq g^{k\ol{\ell}}R_{i\ol{j}k\ol{\ell}}
\]
be the Ricci form of the metric $g$. 
Define the local observable
\begin{align}\label{eq:4:clA}
 \clA_V(Z) \ceq J_{\rho_g,V}(z,\ol{z},\zeta) 
\end{align}
Using \eqref{eq:4.4},  we have 
\[
 \clA_V(z,\ol{z},\zeta) = 
 \sqrt{-1}\,\Ric_{i\ol{j}}(\Phi(z,\ol{z},\zeta)) 
  \nod{(D\Phi^i)\,(D\Phi^{\ol{j}})}(z,\ol{z},\zeta) +
 \sqrt{-1}\,\Ric^{i\ol{j}}(\Phi(z,\ol{z},\zeta)) 
  \nod{\Pi_i\,\Pi_{\ol{j}}}(z,\ol{z},\zeta),
\]
where
\[
 \Ric^{i\ol{j}} \ceq g^{i\ol{k}}g^{\ell\ol{j}}\Ric_{\ell\ol{k}}.
\]
Now, we denote by $H_V,J_V,A_V \in \bbV(\qObs{V})$  (see \eqref{eq:4:bbVqV}) 
the local observables produced by $\clH_V,\clJ_V,\clA_V$ 
under the extraction functor of \cref{thm:1:533}.

\begin{prp}\label[prp]{prp:4.2}
The singular parts of the OPEs of $H_V,J_V$ satisfy 
the $N=2$ superconformal relations (\cite[\S5.9, p.182]{K}, 
\cite[Examples 2.7, 5.10]{HK}, \cite[Examples 2.5, 3.13]{BHS})
up to the defect field $A_V$:
\[
 \text{(actual singular OPEs of $H_V,J_V$)} =
 \text{(standard $N=2$ superconformal OPEs)} + c\,A_V,
\]
for a constant $c$ depending only on the normalization conventions.
\end{prp}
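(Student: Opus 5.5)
The plan is to reduce the computation to a pointwise free-field OPE computation, carried out inside the local extraction $\bbV(\qObs{V})$. The reduction goes through \cref{cor:2:bcbg} and \cref{thm:3:obs}, and the result is then compared with the computation of \cite[\S7]{BHS}.

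\textbf{Reduction to free fields.} Since $H_V$ and $J_V$ are extracted from the Wick-ordered local observables \eqref{eq:4.2} and \eqref{eq:4.4}, the extraction functor of \cref{thm:1:533} sends them to the normally ordered products $\nod{B_a\,(SC^a)}$ and $\nod{I^a{}_b(C)\,(SC^b)\,B_a}$ in $\bbV(\qObs{V})$. The singular part of any OPE among these is therefore computed from the basic free OPEs \eqref{eq:4:freeOPE} by the SUSY Wick theorem \cite[\S3.3]{HK}, because the singular part of the factorization product is governed by the super-Cauchy propagator of \cref{prp:2:sCauchy}.

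\textbf{Choice of coordinates.} Because the statement is local and the singular OPE coefficients are tensorial, I will work at a point in holomorphic normal coordinates. There $g_{i\ol{j}}=\delta_{i\ol{j}}$ and $\Gamma=0$, while the first nonvanishing correction to $I$ (equivalently, to the metric) is the second-order term, which is governed by the curvature $R_{i\ol{j}k\ol{\ell}}$.

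\textbf{Computing the $\Lambda$-brackets.} Next I compute $[H_\Lambda H]$, $[H_\Lambda J]$ and $[J_\Lambda J]$ by the non-commutative Wick formula for $\Lambda$-brackets.
\begin{itemize}
\item Single contractions reproduce the flat-space answer. This gives the $N=2$ relations: $H$ is an $N=1$ Virasoro superfield of the appropriate central charge, $J$ is primary of conformal weight $1$, and $[J_\Lambda J]$ has the standard form.
\item Double contractions hitting the Taylor expansion of the coefficient functions $I^a{}_b(C)$ produce second derivatives of the coefficients evaluated at $C$. In normal coordinates, $\nabla I=0$ turns these into curvature contractions. Tracing over indices yields precisely $\Ric_{i\ol{j}}$ and $\Ric^{i\ol{j}}$, multiplied by $\nod{(SC^i)(S\ol{C}^{\ol{j}})}$ and $\nod{B_i\ol{B}_{\ol{j}}}$, respectively.
\end{itemize}
These are the two terms of $A_V$ from \eqref{eq:4:clA}. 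They appear, for example, in the $\chi$- or $\lambda$-coefficient of $[J_\Lambda J]$.

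\textbf{Gluing.} Finally, the identity is expressed through $g$, $\nabla$ and $\Ric$ alone, so it is chart-independent. It is therefore compatible with the gluing of \cref{thm:3:obs}, which lets me pass from the point back to the whole chart.

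\textbf{Main obstacle.} I expect the hardest step to be the double-contraction bookkeeping: the super-signs, and the precise identification of the curvature terms with exactly $A_V$ rather than some other combination of $\nabla$-derivatives. My first approach would be to mirror \cite[Lemma 7.2 and \S7.3]{BHS} line by line under the dictionary $C^i=B^i_{\BHS}$, $B_i=\Psi^{\BHS}_i$. The constant $c$ would then be read off from their normalization, and the factor $\hbar$ would be carried along via \eqref{eq:4:freeOPE}.
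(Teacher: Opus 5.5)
\textbf{Gap: the step that should produce $A_V$ fails for the currents as defined.}

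Your route is the paper's own: normal coordinates at a point, free-field leading terms, a curvature correction of which only the Ricci part survives, and comparison with \cite{BHS}. The paper's proof only asserts this step, so mirroring it will not close the gap. In holomorphic coordinates $J_0$ acts by $\sqrt{-1}$ on $\pd_{x^i}$ and by $-\sqrt{-1}$ on $\pd_{\ol{x}^{\ol{i}}}$. Hence $\clH_V$ and $\clJ_V$ in \eqref{eq:4.2}, \eqref{eq:4.4} are bilinears $\nod{\Pi\,D\Phi}$ with constant coefficients and no dependence on $g$. There is no Taylor expansion of $I^a{}_b(C)$ for a double contraction to hit. The second-order correction you invoke belongs to $g_{i\ol{j}}$, which does not occur in these currents. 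So every singular OPE among $H_V,J_V$ computed from \eqref{eq:4:freeOPE} is metric-independent, while $A_V$ is not. Applying the claim to a flat and to a non-Ricci-flat K\"ahler metric on the same chart therefore forces $c=0$.

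The same conclusion can be checked term by term:
\begin{itemize}
\item Every contraction pairs a $\Phi$ with a $\Pi$ and carries one $\hbar$. Since $H_V,J_V$ are linear in $\Pi$, each singular term contains at most one $\Pi$. So $\Ric^{i\ol{j}}(\Phi)\nod{\Pi_i\,\Pi_{\ol{j}}}$ can never appear.
\item The holomorphic and anti-holomorphic sectors have no mutual contractions, and the coefficients are constant. The brackets therefore split into purely holomorphic and purely anti-holomorphic pieces, which rules out $\Ric_{i\ol{j}}(\Phi)\nod{(D\Phi^i)(D\Phi^{\ol{j}})}$.
\item $H_V$ and $J_V$ have the same field content, hence the same conformal weight. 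They cannot be the weight-$\tfrac32$ $N=1$ vector and the weight-$1$ current of the $N_K=1$ presentation of $N=2$. So your claim that single contractions make $H$ an $N=1$ Virasoro superfield also fails.
\end{itemize}

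\textbf{What is missing: metric-dependent terms in the currents.} The curvature in \cite[\S7]{BHS} comes from currents built from $g$ and the Levi--Civita connection, and \eqref{eq:4.3} drops exactly the connection terms. For instance, $\nod{\Pi_i\,D\Phi^i}$ is not invariant under the quantum coordinate changes. Normal ordering produces an $\hbar$-correction built from $\log\det(\pd x_\beta/\pd x_\alpha)$, the anomaly behind the Calabi--Yau condition in \cite{MSV}. This anomaly can be compensated by a term involving the trace $\Gamma^i_{ik}=\pd_k\log\det(g_{l\ol{m}})$. Contracting the anti-holomorphic $\Pi_{\ol{j}}$ with the $\ol{\Phi}$-dependence of such a term gives $\pd_{\ol{j}}\pd_k\log\det g$, which is the Ricci tensor up to sign. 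This survives at the base point of normal coordinates, because it is a first derivative of $\Gamma$ rather than a second-order Taylor term of $I$. A proof must therefore first fix corrected, gluable currents and then do the Wick computation for those. Only that computation can show which Ricci-dependent defect actually appears, and with what coefficient.
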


Equivalently, after absorbing $c$ into the definition of $A_V$, 
the defect is exactly the Ricci-form current. 
In particular,
\[
 A_V=0 \ \iff \ \rho_g=0 \ \iff \  \Ric(g)=0,
\]
and in this case the extracted fields of $H_V$ and $J_V$ satisfy 
the $N=2$ superconformal relations exactly.

\begin{proof}
The expressions \cref{eq:4.2,eq:4.4,eq:4:clA} are Wick-ordered 
local functionals of the basic superfields $\Phi,\Pi$ in $\qObs{V}$, 
hence define local observables.
After applying the extraction functor, $\clH_V$ and $\clJ_V$ become 
the corresponding local superfields in \cite[\S7]{BHS}.

To analyze their OPEs, choose holomorphic normal coordinates for $g$ at a point.
At that point, the Christoffel symbols vanish, 
so the leading singular terms are those of 
the $bc$--$\beta\gamma$ system of \cref{s:2}. 
The deviation from the free-field computation is governed by 
the curvature of the Levi--Civita connection. 
Because $(X,g,J_0)$ is K\"ahler and $\nabla J_0=0$, 
all non-Ricci contractions cancel in the relevant
$H$--$H$, $H$--$J$, and $J$--$J$ singular terms, 
and the remaining defect is the current associated to the Ricci form $\rho_g$,
namely $\clA_V$ from \cref{eq:4:clA}. 
Thus the singular OPEs agree with those of 
the $N=2$ superconformal algebra up to some multiple of $A_V$ claimed above.
\end{proof}

\subsection{The Ricci-flat K\"ahler case}\label{ss:4.3}

Assume that $(X,g,J_0)$ is Ricci-flat K\"ahler. 
Then the Ricci form vanishes: $\rho_g=0$.
Hence, by \cref{prp:4.2}, the defect field $\clA_V = \clJ_{\rho_g,V}$ 
in \eqref{eq:4:clA} vanishes on every holomorphic chart $V \subset X$. 
Therefore the local extracted fields of $H_V$ and $J_V$ 
satisfy the $N=2$ superconformal relations exactly.

We now globalize these local currents. 
By \cref{thm:3:obs}, the chart-wise quantum factorization algebras 
glue to a sheaf $\qObs{X}$ on $X$
Since the local expressions
\[
 \clH_V(z,\ol{z},\zeta) = \nod{ \Pi_a\,(D\Phi^a)}(z,\ol{z},\zeta), \quad
 \clJ_V(z,\ol{z},\zeta) = 
  \nod{ J_0{}^a{}_b(\Phi)\,(D\Phi^b)\,\Pi_a}(z,\ol{z},\zeta)
\]
are defined invariantly from the metric $g$ and 
the parallel complex structure $J_0$, 
they are compatible with the descent maps of \cref{s:3}, 
and hence glue to global local observables of $\qObs{X}$.

\begin{thm}\label{thm:4:N=2}
Let $X$ be a complex manifold endowed with a Ricci-flat K\"ahler metric $g$.
Then:
\begin{enumerate}
\item 
The chart-wise local observables $H_V$ and $J_V$ from
\cref{eq:4.2,eq:4.4} descend to global local observables
\[
 \clH, \, \clJ \in \sqObs{X}(\bbD^{1|1}_1)
\]
for the SUSY disk $\bbD^{1|1}_1$.

\item 
The derivations of $\sqObs{X}$ induced by $\clH$ and $\clJ$ 
satisfy the $N=2$ superconformal relations up to homotopy.

\item 
Under the extraction functor of \cref{thm:1:533}, the cohomology
\[
 \clV^{\q} \ceq \bbV(\sqObs{X}) = 
 H^\bullet\!\bigl(\sqObs{X}(\bbD^{1|1}_1)\bigr)
\]
acquires a natural structure of a sheaf of $N_K=2$ SUSY vertex algebras.

\item 
This $N_K=2$ SUSY vertex algebra agrees locally, and therefore globally,
with the $N=2$ structure on the chiral de Rham complex 
in \cite[Theorem 7.4 (ii)]{BHS}.
\end{enumerate}
\end{thm}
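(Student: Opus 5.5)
The plan is to assemble the four claims from \cref{prp:4.2}, \cref{thm:3:obs} and \cref{thm:1:533}, in that order.

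\textbf{Part (1).} Fix a holomorphic atlas $\{V_\alpha\}$. I would check that the local currents $\clH_{V_\alpha}$ and $\clJ_{V_\alpha}$ are intertwined by the transition isomorphisms $\varphi^{\q}_{\beta\alpha}$ of \eqref{eq:3:varphi}. Substituting the superfield law \eqref{eq:3:qch} gives two identities. Since $D$ acts on source coordinates only,
\[
 D\Phi^i_\beta=\frac{\pd f^i_{\beta\alpha}}{\pd x^j_\alpha}(\Phi_\alpha)\,D\Phi^j_\alpha .
\]
Hence $\Pi_{\beta,i}D\Phi^i_\beta=\Pi_{\alpha,j}D\Phi^j_\alpha$, by the same Jacobian cancellation used in \cref{ss:3.2}. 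Likewise $J_0{}^a{}_b(\Phi)\,D\Phi^b\,\Pi_a$ is the contraction of the tensor $J_0$ with a vector-valued and a covector-valued superfield, so it is invariant.

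The genuine issue is normal ordering, since a quantum coordinate change could produce extra terms in $\nod{\ }$. I would handle this in holomorphic normal coordinates, as in the proof of \cref{prp:4.2}. There the correction terms from reordering are proportional to $\Gamma$ and its derivatives. They assemble into a trace of the curvature, namely the Ricci-form current, which vanishes when $\rho_g=0$. By the strict cocycle property shown in \cref{thm:3:obs}, the local currents then glue to sections $\clH,\clJ\in\sqObs{X}(\sD_1)$.

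\textbf{Part (2).} The statement here is about derivations of $\sqObs{X}$. Since $\rho_g=0$, the defect field $A_V$ of \cref{prp:4.2} vanishes on every chart. The singular two-point factorization products of $\clH$ and $\clJ$ therefore agree with the $N=2$ superconformal ones. I would make this precise in two steps.

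First, a closed local observable acts as a derivation of $\sqObs{X}$ through the bracket given by the residue of the factorization product around a small disk. Second, the Cauchy--Riemann-up-to-homotopy condition \ref{i:1:hol} means that any identity among singular parts of cohomology classes holds at cochain level only up to homotopy. This gives the $N=2$ relations up to homotopy.

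\textbf{Parts (3) and (4).} For (3), the sheaf $\sqObs{X}$ satisfies conditions \ref{i:1:u}--\ref{i:1:hol} chartwise, by \cref{prp:2:qObsFA} and \cref{thm:3:obs}. So \cref{thm:1:533} gives a sheaf of $N_K=1$ SUSY vertex algebras $\clV^{\q}$. Let $H,J$ be the classes of $\clH,\clJ$. By (2) their $\Lambda$-brackets are those of the $N=2$ superconformal algebra, so $\clV^{\q}$ becomes an $N_K=2$ SUSY vertex algebra. I would use the standard criterion, \cite[Examples 2.7, 5.10]{HK}, that an $N_K=1$ vertex algebra containing such a pair $(H,J)$ with $H$ the superconformal vector is $N_K=2$.

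For (4), \cref{thm:3:obs} identifies $\clV^{\q}\cong\CdR_X$. On each chart, $H$ and $J$ map to the BHS currents of \cite[Lemma 7.2]{BHS} under the dictionary $C^i=B^i_{\BHS}$, $B_i=\Psi^{\BHS}_i$. Both structures are sheaves and agree locally, so they agree globally.

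I expect the main obstacle to be the normal-ordering anomaly in step (1): showing that every quantum correction to the transformation of $\nod{J_0{}^a{}_b(\Phi)\,D\Phi^b\,\Pi_a}$ is exactly a multiple of the Ricci current. I would attempt this by an explicit one-loop Wick computation with the propagator of \cref{prp:2:sCauchy}.
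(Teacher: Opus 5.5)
Your overall plan matches the paper's. Part (1) comes from covariance of the superfield law plus the strict cocycle of \cref{thm:3:obs}. Parts (2)--(3) come from \cref{prp:4.2} with $\rho_g=0$ together with \cref{thm:1:533}, and (4) from matching the local formulas with BHS. The one place you depart from the paper, the normal-ordering analysis in (1), is also the step that would fail.

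You claim the reordering corrections ``assemble into the Ricci-form current'' and therefore vanish when $\rho_g=0$. This cannot hold.
\begin{itemize}
\item In holomorphic coordinates $J_0$ has constant components $\pm\sqrt{-1}$, so $\clJ_V$ contains no metric. Neither does the free propagator of \cref{prp:2:sCauchy}, nor the transition law \eqref{eq:3:qch}.
\item Hence any correction to $\varphi^{\q}_{\beta\alpha}(\clJ_{V_\alpha})-\clJ_{V_\beta}$ is built from the transition functions alone. It would enter through $\pd^2 f_{\beta\alpha}$, typically as a $D\,\mathrm{tr}\log$ of the Jacobian evaluated at $\Phi_\alpha$.
\item So whether it vanishes cannot depend on $g$, and $\rho_g=0$ cannot be what kills it. If it is nonzero at all, it is nonzero for atlases where $\det\pd f_{\beta\alpha}$ is non-constant, and such atlases exist on every $X$.
\item Holomorphic normal coordinates do not help. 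They are adapted to a single point, while the transition map on an overlap is fixed by the atlas.
\end{itemize}

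What is true is the following. By the inhomogeneous term in the transformation law of Christoffel symbols, a defect of this trace-log type is the \v{C}ech coboundary of the trace $\Gamma^i_{ik}=\pd_k\log\det g$. A genuine anomaly would therefore have to be removed by adding a $\Gamma^i_{ik}$-term to each local current. It is the $\ol{x}$-derivative of that correction, $\pd_{\ol{\ell}}\pd_k\log\det g$ (equal to $\Ric_{k\ol{\ell}}$ up to sign), that shows up as the defect $A_V$ in \cref{prp:4.2}. Ricci-flatness belongs to (2)--(3), not to descent.

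In the paper, (1) uses no curvature at all. It is exactly the classical Jacobian cancellation you wrote down, with $\varphi^{\q}_{\beta\alpha}$ given by \eqref{eq:3:qch} and strictly cocyclic as in \cref{thm:3:obs}. If you drop the anomaly step, or replace it by the correction mechanism above, your argument coincides with the paper's.
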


\begin{proof}
The gluing statement in (1) follows from the same descent argument 
as in \cref{thm:3:obs}: 
the currents are expressed invariantly in terms of 
the basic superfields $\Phi,\Pi$ and the parallel tensors $g$ and $J_0$, 
hence are preserved by the coordinate-change isomorphisms. 
By \cref{prp:4.2}, the only local defect in the $N=2$ OPE relations 
is the Ricci-form current $\clA_V = J_{\rho_g,V}$.

Since $g$ is Ricci-flat, we have $\rho_g=0$, 
hence $\clA_V=0$ on every chart. 
Therefore the extracted local fields satisfy 
the $N=2$ superconformal relations exactly. 
This proves (2) and (3), using \cref{thm:1:533} to pass 
from local factorization currents to the associated SUSY vertex algebra. 
Finally, the local formulas coincide with the BHS superfields 
attached to the metric and the complex structure, 
so the resulting $N_K=2$ structure is precisely 
the one constructed in \cite{BHS}.
\end{proof}

\subsection{The hyperk\"ahler currents}\label{ss:4.4}

Assume now that $(X,g,I,J,K)$ is hyperk\"ahler. 
Thus $g$ is a Riemannian metric and $I,J,K$ are parallel complex structures
satisfying the quaternionic relations
\[
 I^2=J^2=K^2=-1, \quad IJ=K=-JI.
\]
For each $A \in \{I,J,K\}$, we define a local observable of $\qObs{V}$ 
for local coordinate $V \subset X$ by the same formula as in \cref{eq:4.3}:
\begin{align}\label{eq:4.6}
 \clJ_{A,V}(z,\ol{z},\zeta) \ceq 
 \nod{A^a{}_b(\Phi)\,(D\Phi^b)\,\Pi_a}(z,\ol{z},\zeta).
\end{align}
Since $A$ is parallel, this expression is coordinate-covariant and hence defines
a local observable on every holomorphic chart $V\subset X$.

Observe that, 
if we choose $A=I$ and take holomorphic coordinates adapted to $I$, 
then \cref{eq:4.6} reduces to 
the K\"ahler current \cref{eq:4.4} of \cref{ss:4.3}. 
The same applies to $J$ and $K$ after changing the holomorphic structure. 
Thus \cref{eq:4.6} is the natural factorization-algebra lift of the currents 
given in \cite[\S7]{BHS} attached to the three hyperk\"ahler complex structures.
Hence, we have:

\begin{prp}\label[prp]{prp:4.3}
Let $(X,g,I,J,K)$ be hyperk\"ahler. 
On each holomorphic chart $V\subset X$, the local observables
\[
 \clH_V,\  \clJ_{I,V},\ \clJ_{J,V},\ \clJ_{K,V}
\]
belong to $\qObs{V}$. 
Under the extraction functor of \cref{thm:1:533}, 
their extracted fields satisfy the local small $N=4$ superconformal relations
(\cite[\S5.10, p.187]{K}, \cite[Example 5.11]{HK}, \cite[Example 2.6]{BHS}).
\end{prp}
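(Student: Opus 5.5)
The argument has two parts, membership and OPE relations, and the OPE computation is reduced to the K\"ahler case of \cref{prp:4.2} one complex structure at a time.

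\emph{Membership.} For each $A\in\{I,J,K\}$, $\clJ_{A,V}$ in \eqref{eq:4.6} is a Wick-ordered local functional of $\Phi,\Pi$ with coefficients $A^a{}_b(\Phi)$, so it lies in $\qObs{V}$ exactly as in the lemma preceding \cref{prp:4.2}. Since $\nabla A=0$, no connection correction is needed. The same holds for $\clH_V$ from \eqref{eq:4.2}.

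\emph{Pairs involving one complex structure.} A hyperk\"ahler metric is Ricci-flat and K\"ahler with respect to each of $I,J,K$. For a fixed $A$, choose holomorphic coordinates adapted to $A$. Then $\clJ_{A,V}$ becomes the K\"ahler current \eqref{eq:4.4}, and $\clH_V$ is unchanged, since \eqref{eq:4.2} is the coordinate-independent pairing $\Pi_a D\Phi^a$. Applying \cref{prp:4.2} with $\rho_g=0$ shows that $H_V$ and $J_{A,V}$ satisfy the $N=2$ relations exactly. This covers all $H$--$H$, $H$--$J_A$ and $J_A$--$J_A$ OPEs.

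\emph{Mixed pairs.} It remains to compute $J_{A,V}(Z_1)J_{B,V}(Z_2)$ for $A\ne B$. I would work in normal coordinates at a point, where the Christoffel symbols vanish, and use the free OPE \eqref{eq:4:freeOPE}. The single-contraction term pairs $\Pi_a$ with $\Phi^b$ and $D\Phi^b$. This produces the current of the commutator $[A,B]$; for example $[I,J]=2K$ gives a term proportional to $J_{K,V}(Z_2)/z_{12}$, together with a term from $\zeta_{12}$. The double-contraction term gives $\operatorname{tr}(AB)$ over $z_{12}^2$. By $AB=-BA$ and $\operatorname{tr}(AB)=-\operatorname{tr}(BA)$, this trace vanishes. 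Taken together, these terms are exactly the $\mathfrak{su}(2)$ current relations of the small $N=4$ algebra in \cite[Example 5.11]{HK}, with central charge fixed by $\operatorname{tr}(A^2)=-2n$.

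\emph{Main obstacle.} The hard step is controlling curvature corrections in the mixed $J_A$--$J_B$ OPE away from the base point. There, derivatives of $A^a{}_b(\Phi)$ contract with $\Pi$ and produce Riemann-tensor terms. I would first show that these assemble into $R(\cdot,\cdot)\circ A$-type expressions. Because $A$ and $B$ are parallel, they commute with curvature, since the holonomy lies in $Sp(n)$. The surviving trace is then a Ricci contraction, which vanishes on a hyperk\"ahler manifold, mirroring the Ricci-defect argument in \cref{prp:4.2}.

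Finally, I would match normalizations with \cite[Example 2.6]{BHS}. That the relations hold locally on each chart is all the proposition requires.
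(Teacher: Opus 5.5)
Your membership step and your handling of the pairs $H$--$H$, $H$--$J_A$, $J_A$--$J_A$ follow the paper's reasoning. The paper's whole argument is the paragraph preceding the proposition. It observes that in coordinates adapted to $A$ the current $\clJ_{A,V}$ becomes the K\"ahler current \eqref{eq:4.4}, and identifies the four currents with those of \cite[\S7]{BHS}. It thereby takes the small $N=4$ relations, in particular the mixed ones, from \cite[Theorem 7.4 (iii)]{BHS}. You instead try to derive the mixed $J_A$--$J_B$ relations by a direct Wick computation, and there the proposal has a genuine gap. The constant-coefficient part (the commutator current from single contractions, and $\operatorname{tr}(AB)=0$ from the double contraction) is unproblematic. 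But the step you call the main obstacle is only announced, and the mechanism you propose for it cannot work. An OPE is an identity of fields over the whole chart $V$, so choosing normal coordinates at one point does not remove the coefficient fields $\pd_cA^a{}_b(\Phi)$. These produce first-order Christoffel terms, not Riemann-tensor terms, and neither $Sp(n)$-holonomy nor $\Ric=0$ says anything about them.

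Here is a concrete instance. Take $A=I$ and $B=J$ in an $I$-holomorphic chart, where $I$ is constant. Contract $\Pi_a(Z_1)$ with the coefficient $J(\Phi(Z_2))$, and $D\Phi^b(Z_1)$ with $\Pi_c(Z_2)$. This gives a singular term with coefficient $I^a{}_b\,\pd_aJ^b{}_d\,D\Phi^d=\pd_aK^a{}_d\,D\Phi^d$. Nothing else contributes at that order, since $\operatorname{tr}(IJ)=\operatorname{tr}(K)=0$ and $I$ has no $\Phi$-dependence to contract. The Levi-Civita symbols of a K\"ahler metric are of pure type, and $K$ exchanges types. Together with $\nabla K=0$ this gives $\pd_aK^a{}_d=-\Gamma^a{}_{ae}K^e{}_d$, where $\Gamma^i{}_{ie}=\pd_e\log\det(g_{k\ol{l}})$. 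The resulting coefficient is an odd field of conformal weight $\hf$, and no such field can occur in the small $N=4$ relations. Yet it is nonzero in any chart where $\det(g_{k\ol{l}})$ is not constant, so no curvature cancellation can close your argument. You need genuinely new input, for example one of the following:
\begin{itemize}
\item restrict to charts with $\det(g_{k\ol{l}})$ constant, e.g.\ holomorphic Darboux charts for the holomorphic symplectic form;
\item add correction terms to the currents and recheck covariance;
\item as the paper implicitly does, match your currents precisely with those of BHS, normal-ordering conventions included, and import the mixed relations from \cite[Theorem 7.4 (iii)]{BHS}.
\end{itemize}
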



Now, we pass from local hyperk\"ahler currents to 
the global factorization algebra.

\begin{thm}\label{thm:4:N=4}
Let $X$ be a hyperk\"ahler manifold. Then:
\begin{enumerate}
\item \label{i:4:HK1}
The local currents $\clH_V,\clJ_{I,V},\clJ_{J,V},\clJ_{K,V}$
descend to global local observables
\[
 H,\ J_I,\ J_J,\ J_K
\]
of the factorization algebra $\sqObs{X}$.

\item \label{i:4:HK2}
The corresponding derivations of $\sqObs{X}$ satisfy the small
$N=4$ superconformal relations up to homotopy.

\item \label{i:4:HK3}
Under the extraction functor of \cref{thm:1:533}, the cohomology
\[
 \bbV(\sqObs{X}) \ceq H^{\bl}\!\bigl(\sqObs{X}(\sD_1)\bigr)
\]
becomes a sheaf of $N_K=4$ SUSY vertex algebras.

\item 
This $N_K=4$ SUSY vertex algebra is the factorization-algebra lift of
the hyperk\"ahler structure on the chiral de Rham complex 
in \cite[Theorem 7.4 (iii)]{BHS}.
\end{enumerate}
\end{thm}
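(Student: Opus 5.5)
The plan is to follow the template of \cref{thm:4:N=2} closely, replacing the single K\"ahler current by the triple $\clJ_{I,V},\clJ_{J,V},\clJ_{K,V}$ and \cref{prp:4.2} by \cref{prp:4.3}.

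\textbf{Part \ref{i:4:HK1}: descent.} First I would show that each local current transforms tensorially under the superfield coordinate change \eqref{eq:3:qch}. For $\clH_V$ this is the invariance computation already done for $\Pi_i\,\bpd\Phi^i$ in \cref{ss:3.2}, with $\bpd$ replaced by $D$. It works because $D$ acts as a derivation, so $D\Phi^i_\beta=\frac{\pd f^i_{\beta\alpha}}{\pd x^j_\alpha}(\Phi_\alpha)\,D\Phi^j_\alpha$, and the Jacobians then cancel against the transformation of $\Pi$. For $\clJ_{A,V}$ the expression $A^a{}_b(\Phi)\,(D\Phi^b)\,\Pi_a$ is the contraction of the $(1,1)$-tensor $A$ with a vector $D\Phi$ and a covector $\Pi$, so it is coordinate-independent classically. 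Since the transition maps $\varphi^{\q}_{\beta\alpha}$ satisfy the cocycle condition strictly (proof of \cref{thm:3:obs}), compatibility on double overlaps suffices. The local currents then define sections of the sheaf $\sqObs{X}$, giving $H,J_I,J_J,J_K$.

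\textbf{Parts \ref{i:4:HK2} and \ref{i:4:HK3}: the relations.} Each complex structure $A\in\{I,J,K\}$ makes $(X,g,A)$ K\"ahler, and a hyperk\"ahler metric is Ricci-flat. Applying \cref{prp:4.2} to each of $(g,I)$, $(g,J)$, $(g,K)$ shows that $H$ together with any single $J_A$ satisfies the $N=2$ relations exactly, the Ricci defect $A_V$ being zero. The remaining relations are the brackets $[J_{A\,\Lambda}J_B]$ for $A\neq B$. I would compute these in normal coordinates, as in the proof of \cref{prp:4.2}. The leading single contraction produces $\clJ_{[A,B]}$ by the quaternionic relations, for example $[I,J]=2K$. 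The double contraction produces a central term proportional to $\operatorname{tr}(AB)$, which vanishes for $A\neq B$. The curvature corrections vanish because $\nabla A=0$ and the holonomy lies in $Sp(n)$, so $R$ commutes with $I,J,K$ and has vanishing Ricci trace. This is precisely what \cref{prp:4.3} records locally, so I would cite it. Lifting these relations from cohomology to ``derivations up to homotopy'' then uses the Cauchy--Riemann homotopies $h^{(i)}$ of condition \ref{i:1:hol}, exactly as in \cref{thm:4:N=2}. Finally, the extraction functor of \cref{thm:1:533} is functorial and compatible with restriction to charts, so the cohomology sheaf acquires the $N_K=4$ structure, generated by the extracted fields together with the $N_K=1$ structure already present.

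\textbf{Final part: identification.} I would compare the local formulas with \cite[\S7]{BHS}. Via the identification of \cref{thm:3:obs}, namely $C^i=B^i_{\BHS}$ and $B_i=\Psi_i^{\BHS}$, the currents $\clJ_{A,V}$ map to the BHS currents attached to $A$. Agreement locally gives agreement globally because both objects are sheaves glued by the same descent data.

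\textbf{Main obstacle.} I expect the main obstacle to be the quantum descent in part \ref{i:4:HK1}. The normal ordering in $\clJ_{A,V}$ may produce an extra term under coordinate change, analogous to the $c\,b$ correction in the transformation of $\beta$. I would check that this correction is proportional to $\nabla A$, or to a trace of the connection that cancels against the Christoffel symbols, so that it vanishes when $A$ is parallel. The mixed-holomorphicity issue is related: $J$ and $K$ are not holomorphic for the complex structure $I$ that defines the charts. I would handle this by working in charts adapted to $I$, writing $\clJ_J,\clJ_K$ in terms of the holomorphic symplectic form $\Omega_I$ (as in BHS), and noting that $\Omega_I$ is holomorphic and parallel.
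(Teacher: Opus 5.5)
Your proposal follows the paper's proof essentially step for step. Descent of $\clH_V,\clJ_{A,V}$ holds because \eqref{eq:4.6} is built from parallel tensors and is compatible with the transition maps of \cref{s:3}, whose cocycle condition is strict by \cref{thm:3:obs}; the local small $N=4$ relations come from \cref{prp:4.3}; and the extraction functor of \cref{thm:1:533} together with a chart-wise comparison with \cite{BHS} finishes the argument.

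The paper never carries out the quantum normal-ordering check you flag; it simply asserts that parallelism suffices. If you do carry it out, note that for $A=\id$, i.e.\ $\clH_V$ with lowest component $b_ic^i$, the correction is a trace term $\pm\hbar\,\pd_z\log\det\bigl(\pd f_{\beta\alpha}/\pd x_\alpha\bigr)(\gamma_\alpha)$ rather than a multiple of $\nabla A$, so you would have to restrict to unimodular (e.g.\ holomorphic Darboux) charts, which exist because a hyperk\"ahler $X$ has trivial canonical bundle.
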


\begin{proof}
Since $I,J,K$ are parallel complex structures, 
the local formula \cref{eq:4.6} is preserved by 
the coordinate-change isomorphisms of \cref{s:3}.
Hence the local currents descend to global local observables of $\sqObs{X}$,
which proves \ref{i:4:HK1}.
By \cref{prp:4.3}, on each chart, 
their extracted fields satisfy the local small $N_K=4$ relations; 
these local identities are preserved under descent and 
therefore define the corresponding global structure 
on the factorization algebra, proving \ref{i:4:HK2}. 
Applying the extraction functor of \cref{thm:1:533} 
then yields the $N_K=4$ SUSY vertex algebra in \ref{i:4:HK3}. 
The comparison with \cite[\S7]{BHS} follows from the fact 
that the local superfields are exactly the currents 
in \cite[Theorem 7.4 (iii)]{BHS}
attached to the three hyperk\"ahler complex structures.
\end{proof}

\begin{Ack}
The author was supported by JSPS KAKENHI Grant Number 26K06762 
and by Asahipen Hikari Foundation.
\end{Ack}


\end{document}